\def\ls{\lesssim}
\def\gs{\gtrsim}
\def\fz{\infty}
\def\r{\right}
\def\lf{\left}
\def\supp{{\mathop\mathrm{\,supp\,}}}
\def\rr{{\mathbb R}}
\def\rn{{{\rr}^n}}
\def\zz{{\mathbb Z}}
\def\nn{{\mathbb N}}
\def\cc{{\mathbb C}}
\newcommand{\wz}{\widetilde}
\newcommand{\oz}{\overline}
\newcommand{\cs}{{\mathcal S}}
\def\az{\alpha}
\def\lz{\lambda}
\def\bdz{\Delta}
\def\dz {\delta}
\def\bz{\beta}
\def\fai{\varphi}
\def\gz{{\gamma}}
\def\bgz{{\Gamma}}
\def\vz{\varphi}
\def\tz{\theta}
\def\sz{\sigma}
\def\wz{\widetilde}
\def\ls{\lesssim}
\def\gs{\gtrsim}
\def\oz{\omega}
\def\pat{\partial}
\def\hs{\hspace{0.3cm}}
\def\dsum{\displaystyle\sum}
\def\dint{\displaystyle\int}
\def\dfrac{\displaystyle\frac}
\def\dsup{\displaystyle\sup}
\newtheorem{theorem}{Theorem}[section]
\newtheorem{lemma}[theorem]{Lemma}
\newtheorem{corollary}[theorem]{Corollary}
\newtheorem{proposition}[theorem]{Proposition}
\theoremstyle{definition}
\newtheorem{remark}[theorem]{Remark}
\newtheorem{definition}[theorem]{Definition}
\numberwithin{equation}{section}
\def\supp{{\mathop\mathrm{\,supp\,}}}
\def\lfz{\lfloor}
\def\rfz{\rfloor}
\numberwithin{equation}{section}
\begin{document}

\arraycolsep=1pt

\title{\bf\Large Riesz Transform Characterizations of
Musielak-Orlicz-Hardy Spaces
\footnotetext {\hspace{-0.35cm}
2010 {\it Mathematics Subject Classification}. Primary: 47B06;
Secondary: 42B20, 42B30, 42B35, 46E30.
\endgraf {\it Key words and phrases}.
Riesz transform, harmonic function, Cauchy-Riemann equation,
Musielak-Orlicz-Hardy space.
\endgraf Jun Cao is supported by the
Fundamental Research Funds for the Central Universities (Grant No. 2012YBXS16).
Der-Chen Chang is partially supported by an NSF grant DMS-1203845 and Hong Kong
RGC competitive earmarked research grant $\#$601410. Dachun Yang is supported by the National
Natural Science Foundation  of China (Grant Nos. 11171027 and 11361020) and
the Specialized Research Fund for the Doctoral Program of Higher Education
of China (Grant No. 20120003110003).}}
\author{Jun Cao, Der-Chen Chang, Dachun Yang\footnote{Corresponding author}\ \ and Sibei Yang}
\date{}
\maketitle

\vspace{-0.8cm}

\begin{center}
\begin{minipage}{13.5cm}
{\small {\bf Abstract}. Let $\varphi$ be a Musielak-Orlicz function satisfying that, for any
$(x,\,t)\in\mathbb{R}^n\times(0,\,\infty)$,
$\varphi(\cdot,\,t)$ belongs to the Muckenhoupt weight class $A_\infty (\mathbb{R}^n)$
with the critical weight exponent $q(\varphi)\in[1,\,\infty)$ and $\varphi(x,\,\cdot)$
is an Orlicz function with $0<i(\varphi)\le I(\varphi)\le 1$ which are, respectively,
its critical lower type and upper type. In this article, the authors establish the
Riesz transform characterizations of the Musielak-Orlicz-Hardy spaces $H_\varphi
(\mathbb{R}^n)$ which are generalizations of weighted Hardy spaces and Orlicz-Hardy spaces. Precisely,
the authors characterize $H_\varphi (\mathbb{R}^n)$ via all the first order Riesz transforms
when $\frac{i(\varphi)}{q(\varphi)}>\frac{n-1}{n}$, and via all the Riesz transforms
with the order not more than $m\in\mathbb{N}$ when $\frac{i(\varphi)}{q(\varphi)}>\frac{n-1}{n+m-1}$.
Moreover, the authors also establish the Riesz transform characterizations of
$H_\varphi(\mathbb{R}^n)$, respectively, by means of the
higher order Riesz transforms defined via the homogenous harmonic polynomials
or the odd order Riesz transforms. Even if when $\varphi(x,t):=tw(x)$ for all
$x\in{\mathbb R}^n$ and $t\in [0,\infty)$, these results also widen the range
of weights in the known Riesz characterization
of the classical weighted Hardy space $H^1_w({\mathbb R}^n)$
obtained by R. L. Wheeden from $w\in A_1({\mathbb R}^n)$
into $w\in A_\infty({\mathbb R}^n)$ with the sharp range
$q(w)\in [1,\frac n{n-1})$, where $q(w)$ denotes the critical
index of the weight $w$.}
\end{minipage}
\end{center}

\section{Introduction\label{s1}}
\hskip\parindent
Denote by $\cs(\rn)$ the \emph{space of all Schwartz functions} on $\rn$.
For $j\in\{1,\,\ldots,\,n\}$, $f\in \mathcal{S}(\rn)$ and $x\in\rn$,
the $j$-\emph{th Riesz transform} of $f$ is usually defined by
\begin{eqnarray}\label{eqn RZ}
R_j(f)(x):=\lim_{\epsilon\to 0^+} C_{(n)} \dint_{\{y\in\rn:\ |y|>\epsilon\}}
\frac{y_j}{|y|^{n+1}}f(x-y)\,dy,
\end{eqnarray}
here and hereafter, $\epsilon\to 0^+$ means that $\epsilon>0$ and $\epsilon\to 0$,
$C_{(n)}:=\frac{\bgz((n+1)/{2})}{\pi^{(n+1)/2}}$ and $\bgz$ denotes
the Gamma function. As a natural generalization of the Hilbert transform
to the Euclidean space of higher dimension, Riesz transforms may be the
most  typical examples of Calder\'on-Zygmund operators which have been
extensively studied by many mathematicians
(see, for example, \cite{St70,St91,Gr08} and their references).

While most literatures on Riesz transforms focus on their boundedness
on various function spaces, the main purpose of this article is to establish
Riesz transform characterizations of some Hardy spaces of Musielak-Orlicz type.
This research originates from Fefferman-Stein's 1972 celebrating seminal paper
\cite{FS72} and was then extended by Wheeden to the weighted Hardy space $H^1_w(\rn)$
(see \cite{Wh76}). It is known that, when establishing Riesz transform
characterizations of Hardy spaces $H^p(\rn)$, we need to extend the elements
of $H^p(\rn)$ to the upper half space $\rr^{n+1}_+:=\rn\times (0,\,\fz)$
via the Poisson integral.
This extension in turn has close relationship with the analytical
definition of $H^p(\rn)$ which is the key starting point of studying the Hardy space,
before people paid attention to the real-variable theory of $H^p(\rn)$
(see \cite{SW60,St61,SW68,MW78,Wh79}). Recall also that
the real-variable theory of $H^p(\rn)$ and their weighted versions plays
very important roles in analysis such as harmonic analysis and partial
differential equations; see, for example, \cite{St91,Gr09, DHHR11}.

The Riesz transform characterization of Hardy spaces on $\rn$ is one
of the most important and useful real-variable characterizations
(see \cite{FS72,MW78,Wh79,St91,Uc84}).
Indeed it is well known that Riesz transforms have many interesting properties.
For example, they are the simplest, non-trivial, ``invariant" operators
under the acting of the group of rotations in the Euclidean space $\rn$,
and they also constitute typical and important examples of Fourier multipliers.
Moreover, they can be used to mediate between various combinations
of partial derivatives of functions. All these properties make Riesz
transforms ubiquitous in mathematics (see \cite{St70} for more details
on their applications). Recall also that Riesz transforms
are not bounded on Lebesgue spaces $L^p(\rn)$ when $p\in(0,\,1]$.
One of the main motivations to introduce the
Hardy space $H^p(\rn)$ with $p\in(0,\,1]$ is to find a suitable substitute
of $L^p(\rn)$ when studying the
boundedness of some operators.

Denote by $\cs'(\rn)$ the \emph{dual space} of $\cs(\rn)$ (namely, the \emph{space of all
tempered distributions}). Let $f\in \mathcal{S}'(\rn)$.
Recall that a distribution $f\in \mathcal{S}'(\rn)$
is called a \emph{distribution restricted at infinity},
if there exists a positive number $r$ sufficiently large such that,
for all $\phi\in \mathcal{S}(\rn)$,  $f*\phi\in L^r(\rn)$.
Fefferman and Stein \cite{FS72} proved the following important result
(see also \cite[p.\,123,\ Proposition 3]{St91}
for a more detailed description).

\begin{theorem}[\cite{FS72}]\label{thm RTC of HP}
Let $p\in(\frac{n-1}{n},\,\fz)$, $\phi\in\mathcal{S}(\rn)$ satisfy $\int_{\rn}\phi(x)\,dx=1$
and $f$ be a distribution restricted at infinity. Then
$f\in H^p(\rn)$ if and only if there exists a positive constant
$A$ such that, for all $\epsilon\in(0,\,\fz)$ and $j\in\{1,\,\ldots,\,n\}$,
$f*\phi_\epsilon,\ R_j(f)*\phi_\epsilon\in L^p(\rn)$ and
\begin{eqnarray*}
\lf\|f*\phi_\epsilon\r\|_{L^p(\rn)}+
\dsum_{j=1}^n \lf\|R_j(f)*\phi_\epsilon\r\|_{L^p(\rn)}\le A,
\end{eqnarray*}
where, for all $x\in\rn$, $\phi_\epsilon(x):=\frac{1}{\epsilon^n}\phi(\frac{x}{\epsilon})$.
 Moreover, there exists a positive constant $C$,
independent of $f$ and $\epsilon$, such that
\begin{eqnarray*}
\frac{1}{C}\|f\|_{H^p(\rn)}\le A\le C\|f\|_{H^p(\rn)}.
\end{eqnarray*}
\end{theorem}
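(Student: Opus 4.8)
The plan is to reduce the statement to the classical subharmonicity machinery for systems of conjugate harmonic functions, following the route sketched in \cite[p.\,123]{St91}. Given $f$ a distribution restricted at infinity, the first step is to pass to the Poisson extension. Assuming $f*\phi_\epsilon$ and each $R_j(f)*\phi_\epsilon$ lie in $L^p(\rn)$ with uniformly bounded norms, I would form, for fixed $\epsilon\in(0,\fz)$, the vector-valued function $F_\epsilon:=(P_t*(f*\phi_\epsilon),\,P_t*(R_1(f)*\phi_\epsilon),\,\ldots,\,P_t*(R_n(f)*\phi_\epsilon))$ on $\rr^{n+1}_+$, where $P_t$ is the Poisson kernel. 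Because the Riesz transforms are precisely the operators realizing the harmonic conjugates, $F_\epsilon$ satisfies the generalized Cauchy--Riemann equations, hence $|F_\epsilon|^{(n-1)/n}$ is subharmonic on $\rr^{n+1}_+$ exactly in the range $p>\frac{n-1}{n}$. This is the classical Stein--Weiss observation and is the analytic heart of the argument.

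Next I would use the subharmonicity to control the nontangential maximal function. The standard estimate gives
\begin{eqnarray*}
\sup_{t>0}|F_\epsilon(\cdot,t)|\ls \cm\lf(|F_\epsilon(\cdot,\delta)|^{(n-1)/n}\r)^{n/(n-1)}
\end{eqnarray*}
as $\delta\to0^+$, where $\cm$ is the Hardy--Littlewood maximal operator; combined with the $L^p$ boundedness of $\cm$ on $L^{pn/(n-1)}$ (valid since $pn/(n-1)>1$) and the boundary values $f*\phi_\epsilon$, $R_j(f)*\phi_\epsilon\in L^p(\rn)$, one obtains
\begin{eqnarray*}
\lf\|\sup_{t>0}|P_t*(f*\phi_\epsilon)|\r\|_{L^p(\rn)}\ls \lf\|f*\phi_\epsilon\r\|_{L^p(\rn)}+\dsum_{j=1}^n\lf\|R_j(f)*\phi_\epsilon\r\|_{L^p(\rn)}\le A,
\end{eqnarray*}
uniformly in $\epsilon$. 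Since the Poisson maximal function dominates (up to constants) the grand maximal function characterizing $H^p(\rn)$, this shows $f*\phi_\epsilon\in H^p(\rn)$ with norm $\ls A$. Letting $\epsilon\to0^+$ and using that $f$ is restricted at infinity — so that $f*\phi_\epsilon\to f$ in $\cs'(\rn)$ and the $H^p$ norms are uniformly bounded — a weak-$*$ compactness argument in $H^p(\rn)$ yields $f\in H^p(\rn)$ with $\|f\|_{H^p(\rn)}\ls A$. The converse direction, that $f\in H^p(\rn)$ forces the stated uniform bound, follows from the $H^p$-boundedness of the Riesz transforms (they are Calder\'on--Zygmund operators, bounded on $H^p(\rn)$ for $p>\frac{n-1}{n}$) together with the elementary fact that convolution with $\phi_\epsilon$ is bounded on $H^p$ uniformly in $\epsilon$, giving $A\le C\|f\|_{H^p(\rn)}$.

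The main obstacle is the subharmonicity step and the passage to the limit. The exponent $(n-1)/n$ is sharp precisely because the system $F_\epsilon$ has $n+1$ components satisfying the Cauchy--Riemann system, and one must verify carefully that $|F_\epsilon|^q$ is subharmonic for $q\ge (n-1)/n$; the delicate point is that $F_\epsilon$ may vanish, so one argues on the open set where $|F_\epsilon|>0$ and checks the subharmonic inequality survives across the zero set. The second delicate point is justifying that the regularized objects $f*\phi_\epsilon$ genuinely converge to $f$ in $H^p(\rn)$ rather than merely in $\cs'(\rn)$: this is where the hypothesis that $f$ is a distribution restricted at infinity is essential, since it guarantees $f*\phi\in L^r(\rn)$ for some large $r$ and thus $P_t*f$ makes classical sense and behaves well under the regularization. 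Everything else — the maximal function estimates, the $L^p$ boundedness of $\cm$, the identification of $H^p$ via the Poisson maximal function — is standard real-variable Hardy space theory.
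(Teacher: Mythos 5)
This theorem is stated in the paper as a cited result from Fefferman--Stein \cite{FS72} (see also \cite[p.\,123, Proposition 3]{St91}); the paper itself gives no proof, so there is no internal argument to compare against. Your sketch follows the classical Fefferman--Stein/Stein--Weiss route: build a conjugate harmonic system from the Poisson extensions of $f*\phi_\epsilon$ and $R_j(f)*\phi_\epsilon$, invoke subharmonicity of $|F_\epsilon|^{(n-1)/n}$, derive a harmonic majorant, control the Poisson nontangential maximal function, then pass to the limit in $\epsilon$; the converse via boundedness of Riesz transforms and of $\phi_\epsilon$-convolution on $H^p$. That is indeed the standard argument and the structure is sound.

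Two points deserve correction. First, you write that "the Poisson maximal function dominates (up to constants) the grand maximal function characterizing $H^p(\rn)$"; this pointwise domination goes in the \emph{opposite} direction (the grand maximal function is the larger object, up to constants). What you actually need, and what is true, is the theorem that for a tempered distribution the $L^p$ quasi-norms of the nontangential Poisson maximal function and the grand maximal function are comparable; that equivalence is nontrivial (it rests on a decomposition of the Poisson kernel such as \eqref{eqn decomposition PK} and the vertical/nontangential maximal function comparison), and your argument should cite it as such rather than as a pointwise bound. Second, "weak-$*$ compactness in $H^p(\rn)$" is problematic for $p\le 1$: $H^p(\rn)$ is not the dual of any natural Banach space in this range, so the compactness statement as phrased is not available. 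The cleaner closing step is a Fatou argument: from uniform in $\epsilon$ control of $\sup_{t>0}|P_t*(f*\phi_\epsilon)|$ in $L^p$, together with the pointwise convergence $P_t*(f*\phi_\epsilon)\to P_t*f$ (valid because $f$ is restricted at infinity), Fatou's lemma bounds $\|\sup_{t>0}|P_t*f|\|_{L^p}$ and hence $\|f\|_{H^p}$ by $CA$. With those two repairs the sketch matches the intended proof.
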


It is known that, for $p\in (0,\frac{n-1}{n}]$, $H^p(\rn)$ can
be characterized no longer by first order Riesz transforms
but by higher order Riesz transforms
(see \cite[p.\,168]{FS72} or
Theorem \ref{thm2} below for more details).

In this article, we establish the Riesz transform characterization of the
Musielak-Orlicz-Hardy space $H_\varphi (\mathbb{R}^n)$
which is introduced by Ky \cite{K11}.
It is known that the space $H_\varphi (\mathbb{R}^n)$ is a generalization of the
Orlicz-Hardy space introduced by Str\"omberg \cite{Str79} and Janson \cite{Ja80},
and the weighted Hardy space
$H_w^p(\mathbb{R}^n)$ for $w\in A_\infty(\rn)$ and $p\in(0,\,1]$, introduced by
Garc\'ia-Cuerva \cite{G79} and Str\"omberg-Torchinsky \cite{S-T89}. Here, $A_q(\rn)$ with $q\in[1,\fz]$ denotes
the class of Muckenhoupt weights (see, for example,
\cite{G79,GR85,Gr08} for their definitions and properties).
Moreover, the space $H_\varphi (\mathbb{R}^n)$ also has already found
many applications in analysis (see,
for example, \cite{bfg10,bgk12,hyy,Ky13,K11} and their references).

Recall that, in \cite{Wh76}, Wheeden characterized the weighted Hardy
space $H^1_w(\rn)$ via first order Riesz transforms when $w\in A_1(\rn)$.
Our results extend the corresponding results of \cite{FS72,Wh76} essentially;
see Remark \ref{r1.1} below for more details.

In order to state the main results of this article, let us recall some necessary
definitions and notation.

Let $\fai$ be a nonnegative function on $\rn\times[0,\,\fz)$.
The function $\fai$ is called a \emph{Musielak-Orlicz function},
if, for any $x\in\rn$, $\fai(x,\,\cdot)$ is an Orlicz function on $[0,\,\fz)$
and, for any $t\in[0,\,\fz)$, $\fai(\cdot,\,t)$ is measurable on $\rn$.
Here a function $\Phi:[0,\,\fz)\to[0,\,\fz)$ is called an \emph{Orlicz function}, if it
is nondecreasing, $\Phi(0)=0$, $\Phi(t)>0$ for $t\in(0,\,\fz)$ and
$\lim_{t\to\fz}\Phi(t)=\fz$ (see, for example,
\cite{M83}). Remark that, unlike the usual case, \emph{such a $\Phi$ may not
be convex}.

For an Orlicz function $\Phi$, the most useful tool to study its growth property may be  the
upper and the lower types of $\Phi$. More precisely, for $p\in(0,\,\fz)$,
a function $\Phi$ is said to be of
\emph{upper} (resp. \emph{lower}) \emph{type} $p$, if there exists a positive constant
$C$ such that, for all $s\in[1,\fz)$ (resp. $s\in[0,1]$) and $t\in[0,\fz)$,
\begin{equation}\label{1.x1}
\Phi(st)\le Cs^p \Phi(t).
\end{equation}

Let $\fai$ be a Musielak-Orlicz function.
The \emph{Musielak-Orlicz space $L^{\fai}(\rn)$},
which was first introduced by Musielak \cite{M83},
is defined to be the set of all measurable functions $f$ such that
$\int_{\rn}\fai(x,|f(x)|)\,d x<\fz$ with the \emph{Luxembourg-Nakano (quasi-)norm}:
\begin{eqnarray}\label{1.4}
\|f\|_{L^{\fai}(\rn)}:=\inf\lf\{\lz\in(0,\fz):\ \int_{\rn}
\fai\lf(x,\frac{|f(x)|}{\lz}\r)\,dx\le1\r\}.
\end{eqnarray}

We also need the following notion of Muckenhoupt weight classes
from \cite{Mu72}.
For $q\in(1,\,\fz)$, a nonnegative locally integrable function $w$ on $\rn$ is said to
belong to $A_q(\rn)$, if, for all balls $B\subset \rn$,
\begin{eqnarray*}
\lf\{\frac{1}{|B|}\dint_{B}
w(x)\,dx\r\}\lf\{\frac{1}{|B|}\dint_{B}
\lf[w(x)\r]^{1-q'}\,dx\r\}^{q-1}\le [w]_{A_q(\rn)}<\fz,
\end{eqnarray*}
here and hereafter, $q':=\frac{q}{q-1}$ denotes the \emph{conjugate exponent} of $q$.
Moreover, the nonnegative locally integrable function $w$ is said to belong to $A_1(\rn)$,
if, for all balls $B\subset \rn$,
\begin{eqnarray*}
\lf\{\frac{1}{|B|}\dint_{B} w(x)\,dx\r\}
\lf\{\mathop\mathrm{ess\,sup}_{y\in B}[w(y)]^{-1}\r\}\le [w]_{A_1(\rn)}<\fz.
\end{eqnarray*}
Let $A_\fz(\rn):=\cup_{q\in[1,\,\fz)}A_q(\rn)$.
Moreover, throughout the whole article, we \emph{always assume} that the Musielak-Orlicz functions
satisfy the following \emph{growth assumptions} (see \cite[Definition 2.1]{K11}).

\medskip

\noindent {\bf Assumption ($\fai$).}
Let $\fai:\ \rn\times[0,\fz)\to[0,\fz)$ be a Musielak-Orlicz function satisfying
the following two conditions:
\vspace{-0.25cm}
\begin{enumerate}
 \item[(i)] for any $t\in(0,\,\fz)$,
 $\fai(\cdot,\,t)\in A_{\fz}(\rn)$;
\vspace{-0.25cm}
\item[(ii)] there exists $p\in(0,\,1]$ such that,
for every $x\in\rn$, $\fai(x,\,\cdot)$ is of upper type $1$ and of
lower type $p$.
\end{enumerate}

Notice that there exist many examples of functions satisfying Assumption
$(\fai)$. For example,  for all $(x,\,t)\in\rn\times[0,\,\fz)$,
$\fai(x,\,t):=\oz(x)\Phi(t)$ satisfies Assumption $(\fai)$ if
$\oz\in A_{\fz}(\rn)$ and $\Phi$ is an Orlicz function of lower
type $p$ for some $p\in(0,\,1]$ and upper type 1. A typical example
of such an Orlicz function $\Phi$ is $\Phi(t):=t^p$, with $p\in(0,\,1]$, for all $t\in [0,\,\fz)$;
see, for example, \cite{hyy,Ky13,K11} for more examples.
Another typical example of functions satisfying Assumption
$(\fai)$ is $\fai(x,t):=\frac{t^{\az}}{[\ln(e+|x|)]^{\bz}+[\ln(e+t)]^{\gz}}$
for all $x\in\rn$ and $t\in[0,\,\fz)$ with any $\az\in(0,\,1]$ and
$\bz,\,\gz\in [0,\,\fz)$ (see \cite{K11} for further details).

For a Musielak-Orlicz  function $\fai$ satisfying Assumption $(\fai)$,
the following critical indices are useful. Let
\begin{eqnarray}\label{1.5}
I(\fai):=\inf\lf\{p\in(0,\,\fz):\ \text{for any}\ x\in\rn,\ \fai(x,\,\cdot)\ \text{is of upper
type}\ p\r.\\ \nonumber
\hspace{2cm} \lf.\text{with}\ C\ \text{as\ in}\ \eqref{1.x1}\ \text{independent\ of}\ x\r\},
\end{eqnarray}
\begin{eqnarray}\label{1.6}
i(\fai):=\sup\{p\in(0,\,\fz):\ \text{for any $x\in\rn$},
\ \fai(x,\,\cdot)\ \text{is of lower type}\ p\\ \nonumber
\hspace{2cm} \text{with}\ C\ \text{as\ in}\ \eqref{1.x1}\ \text{independent\ of}\ x\}
\end{eqnarray}
and
\begin{eqnarray}\label{1.7}
\hs q(\fai)&&:=\inf\lf\{q\in[1,\,\fz):\ \text{for any $t\in(0,\,\fz)$},\
\fai(\cdot,\,t)\in A_{q}(\rn)\r.\\
&&\hspace{2cm}\lf.\nonumber
\text{with}\ [\fai(\cdot,\,t)]_{A_q(\rn)}\ \text{independent of}\ t\r\}.
\end{eqnarray}

Let $\nn:=\{1,\,2,\, \ldots\}$ and $\zz_+:=\{0\}\cup\nn$. For any
$\tz:=(\tz_{1},\ldots,\tz_{n})\in\zz_{+}^{n}$, let
$|\tz|:=\tz_{1}+\cdots+\tz_{n}$ and
$\partial^{\tz}_x:=\frac{\partial^{|\tz|}}{\partial
{x_{1}^{\tz_{1}}}\cdots\partial {x_{n}^{\tz_{n}}}}$.
For $m\in\nn$, define
$$\cs_m(\rn):=\lf\{\phi\in\cs(\rn):\ \sup_{x\in\rn}\sup_{
\bz\in\zz^n_+,\,|\bz|\le m+1}(1+|x|)^{(m+2)(n+1)}|\partial^\bz_x\phi(x)|\le1\r\}.
$$
Then, for all $x\in\rn$ and $f\in\cs'(\rn)$, the \emph{non-tangential grand maximal function}
$f^\ast_m$ of $f$ is defined by setting,
$$f^\ast_m(x):=\sup_{\phi\in\cs_m(\rn)}\sup_{|y-x|<t,\,t\in(0,\fz)}|f\ast\phi_t(y)|,
$$
where, for all $t\in(0,\,\fz)$, $\phi_t(\cdot):=t^{-n}\phi(\frac{\cdot}{t})$.
When $m(\fai):=\lfz n[q(\fai)/i(\fai)-1]\rfz$,
where $q(\fai)$ and $i(\fai)$ are, respectively,  as in \eqref{1.7} and \eqref{1.6},
and $\lfz s\rfz$ for $s\in\rr$ denotes the \emph{maximal integer not more than $s$},
we \emph{denote $f^\ast_{m(\fai)}$ simply by $f^\ast$}.

Ky \cite{K11} introduced the following Musielak-Orlicz-Hardy spaces $H_\fai(\rn)$.

\begin{definition}[\cite{K11}]\label{def MOH space}
Let $\fai$ satisfy Assumption $(\fai)$.
The \emph{Musielak-Orlicz-Hardy space $H_\fai(\rn)$} is defined to
be the space of all $f\in\cs'(\rn)$ such that $f^\ast\in L^\fai(\rn)$
with the \emph{quasi-norm}
$\|f\|_{H_\fai(\rn)}:=\|f^\ast\|_{L^\fai(\rn)}$.
\end{definition}

\begin{remark}\label{rem defMOH}
(i) We point out that, if $\fai(x,\,t):=w(x)t^p$, with $p\in(0,\,1]$
and $w\in A_\infty(\rn)$,  for all $(x,\,t)\in\rn\times[0,\,\fz)$,
the {Musielak-Orlicz-Hardy space $H_\fai(\rn)$}
coincides with the weighted Hardy space $H_w^p(\rn)$ studied in \cite{G79,S-T89};
if $\fai(x,\,t):=\Phi(t)$, with $\Phi$ an Orlicz function whose
upper type is $1$ and lower type $p\in (0,1]$, for all $(x,\,t)\in\rn\times[0\,,\fz)$,
$H_\fai(\rn)$ coincides with the Orlicz-Hardy space
$H_\Phi(\rn)$ introduced in \cite{Ja80, Str79}.
Also, the Musielak-Orlicz-Hardy space $H_\fai(\rn)$ has
proved useful in the study of other analysis problems
when we take various different Musielak-Orlicz functions $\fai$
(see, for example, \cite{bfg10,Ky13,K11}).

(ii) For all $(x,\,t)\in\rn\times [0,\,\fz)$, let
\begin{eqnarray}\label{eqn deffai}
\wz\fai(x,\,t):=\dint_0^t\frac{\fai(x,\,s)}{s}\,ds.
\end{eqnarray}
It is easy to see that $\wz\fai$ is strictly increasing and continuous in
$t$. Similar to \cite[Proposition 3.1]{Vi87}, we know that $\wz\fai$ inherits the
types of $\fai$ and is equivalent to $\fai$, which implies that
$H_\fai(\rn)=H_{\wz\fai}(\rn)$
with equivalent quasi-norms. Thus, without loss of generality,
in the remainder of this article,
we may \emph{always assume} that $\fai(x,\cdot)$ for all $x\in\rn$
is strictly increasing and continuous on $[0,\,\fz)$.

(iii) Let $\overline{H_\fai(\rn)\cap L^2(\rn)}^{\|\cdot\|_{H_\fai(\rn)}}$
be the \emph{completion} of the set $H_\fai(\rn)\cap L^2(\rn)$ under the
quasi-norm $\|\cdot\|_{H_\fai(\rn)}$. From the fact that $H_\fai(\rn)\cap L^2(\rn)$
is dense in $H_\fai(\rn)$ which is a simple corollary of \cite[Theorem 3.1]{K11}
in the case $q=\fz$, we immediately deduce that
\begin{eqnarray*}
\overline{H_\fai(\rn)\cap L^2(\rn)}^{\|\cdot\|_{H_\fai(\rn)}}=H_\fai(\rn).
\end{eqnarray*}
\end{remark}

In order to obtain Riesz transform characterizations of $H_\fai(\rn)$,
we have to overcome some essential difficulties, which have already existed even in the case
of weighted Hardy spaces $H_w^p(\rn)$, caused by weights. One of the most typical difficulties
relies on the fact that, for an arbitrary $f\in H_\fai(\rn)$,
we cannot obtain directly that $f$ is a distribution restricted at
infinity as in the unweighted case.
To be more precise, let $\phi\in \mathcal{S}(\rn)$ with $\int_\rn \phi(x)\,dx=1$,
$p\in(0,\,i(\fai))$, $t\in(0,\,\fz)$ and $x\in\rn$, assume that $|f*\phi_t(x)|\ge 1$;
then, following Stein's argument (see \cite[pp.\,100-101]{St91}) and using the lower type
 $p$ property of $\fai(\cdot,t)$, we see that
\begin{eqnarray}\label{eqn difficult}
\lf|f*\phi_t(x)\r|^p\ls \frac{\int_{B(x,\,1)}\lf|f*\phi_t(x)\r|^p\fai(y,\,1)\,dy}
{\int_{B(x,\,1)}\fai(y,\,1)\,dy}\ls \|f\|^p_{H_\fai(\rn)}\frac{1}
{{\int_{B(x,\,1)}\fai(y,\,1)\,dy}}.
\end{eqnarray}
From this, it follows that, in order to show that $f*\phi_t\in L^\fz(\rn)$, 
we need $\frac{1} {{\int_{B(x,\,1)}\fai(y,\,1)\,dy}}$ is uniformly bounded in $x$
(see \cite[Remark 3.3]{B90} for a similar condition in the case of
weighted Hardy spaces).

To get rid of this unpleasant and awkward restriction, we aptly adapt a smart and wise strategy that has
recently been used in the case of Hardy spaces associated with operators (see, for example,
\cite[Theorem 5.2]{HMM11} for Riesz transform characterizations
of Hardy spaces associated with second order divergence form elliptic operators).
Precisely, we first restrict the \emph{working space} to $H_\fai(\rn)\cap L^2(\rn)$,
in which the Riesz transforms and Poisson integrals are well defined, then
we extend the working space by a process of completion via the quasi-norm
based on Riesz transforms.   In particular, we introduce the following
Riesz Musielak-Orlicz-Hardy space.
\begin{definition}\label{def RMOHS}
Let $\fai$ satisfy Assumption $(\fai)$.
The \emph{Riesz Musielak-Orlicz-Hardy space $H_{\fai,\,\mathrm{Riesz}}(\rn)$} is defined to
be the completion of the set
\begin{eqnarray*}
\mathbb{H}_{\fai,\,\mathrm{Riesz}}(\rn)
:=\{f\in L^2(\rn):\ \|f\|_{H_{\fai,\,\mathrm{Riesz}}(\rn)}<\fz\}
\end{eqnarray*}
under the \emph{quasi-norm} $\|\cdot\|_{H_{\fai,\,\mathrm{Riesz}}
(\rn)}$, where, for all $f\in L^2(\rn)$,
\begin{eqnarray*}
\lf\|f\r\|_{H_{\fai,\,\mathrm{Riesz}}(\rn)}:=\|f\|_{L^\fai(\rn)}+
\dsum_{j=1}^n\lf\|R_j(f)\r\|_{L^\fai(\rn)}.
\end{eqnarray*}
\end{definition}

Now we give out the first main result of this article.

\begin{theorem}\label{thm1}
Let $\fai$ satisfy Assumption $(\fai)$ and $\frac{i(\fai)}{q(\fai)}\in(\frac{n-1}{n},\,\fz)$
with $i(\fai)$ and $q(\fai)$ as in \eqref{1.6} and \eqref{1.7}, respectively.
Then $H_\fai(\rn)=H_{\fai,\,\mathrm{Riesz}}(\rn)$ with equivalent quasi-norms.
\end{theorem}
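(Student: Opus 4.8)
\textbf{Proof proposal for Theorem \ref{thm1}.}
The plan is to prove the two set inclusions $H_\fai(\rn)\subseteq H_{\fai,\,\mathrm{Riesz}}(\rn)$ and $H_{\fai,\,\mathrm{Riesz}}(\rn)\subseteq H_\fai(\rn)$ with control of the quasi-norms. By Remark \ref{rem defMOH}(iii), $H_\fai(\rn)\cap L^2(\rn)$ is dense in $H_\fai(\rn)$, and by construction $\mathbb{H}_{\fai,\,\mathrm{Riesz}}(\rn)=\{f\in L^2(\rn):\|f\|_{H_{\fai,\,\mathrm{Riesz}}(\rn)}<\fz\}$ is dense in $H_{\fai,\,\mathrm{Riesz}}(\rn)$; since both target spaces are completions, it suffices to establish a two-sided norm equivalence on the common dense subset $H_\fai(\rn)\cap L^2(\rn)$, i.e. to show that for $f\in H_\fai(\rn)\cap L^2(\rn)$ one has $f\in\mathbb{H}_{\fai,\,\mathrm{Riesz}}(\rn)$ with $\|f\|_{H_{\fai,\,\mathrm{Riesz}}(\rn)}\lesssim\|f\|_{H_\fai(\rn)}$, and conversely every $f\in\mathbb{H}_{\fai,\,\mathrm{Riesz}}(\rn)$ lies in $H_\fai(\rn)$ with $\|f\|_{H_\fai(\rn)}\lesssim\|f\|_{H_{\fai,\,\mathrm{Riesz}}(\rn)}$; a routine abstract-completion argument then upgrades this to the claimed identity of spaces. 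Restricting to $L^2(\rn)$ from the outset is precisely the device that circumvents the ``distribution restricted at infinity'' obstruction discussed around \eqref{eqn difficult}: on $L^2(\rn)$ the Riesz transforms and the Poisson semigroup are unambiguously defined.

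For the easy direction, $H_\fai(\rn)\cap L^2(\rn)\hookrightarrow H_{\fai,\,\mathrm{Riesz}}(\rn)$: given $f\in H_\fai(\rn)\cap L^2(\rn)$, I would use the atomic (or molecular) decomposition of $H_\fai(\rn)$ from \cite{K11}, so that $f=\sum_k\lambda_k a_k$ with $(\fai,\,q,\,s)$-atoms $a_k$, and then show each $R_j$ maps an atom to a fixed multiple of a molecule adapted to $\fai$. This is where the hypothesis $i(\fai)/q(\fai)>(n-1)/n$ enters: it guarantees enough vanishing moments and decay for $R_j(a_k)$ to be an admissible molecule, via the standard Calder\'on--Zygmund kernel estimates for $R_j$ together with the size/cancellation of the atom. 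Summing the molecular quasi-norms (using the lower-type $p$ property of $\fai$ and the doubling of $\fai(\cdot,t)$) yields $\|R_j(f)\|_{L^\fai(\rn)}\lesssim\|f\|_{H_\fai(\rn)}$, and since obviously $\|f\|_{L^\fai(\rn)}\lesssim\|f\|_{H_\fai(\rn)}$, this direction is complete.

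The substantive direction is $\mathbb{H}_{\fai,\,\mathrm{Riesz}}(\rn)\hookrightarrow H_\fai(\rn)$. The plan is to adapt the classical Fefferman--Stein/Wheeden harmonic-function argument. Given $f\in L^2(\rn)$ with $\|f\|_{L^\fai}+\sum_j\|R_j(f)\|_{L^\fai}<\fz$, form the Poisson integrals $u_0:=P_t\ast f$ and $u_j:=P_t\ast R_j(f)$ on $\rr^{n+1}_+$; the vector $F=(u_0,u_1,\dots,u_n)$ satisfies the generalized Cauchy--Riemann (conjugate-harmonic) system, so $|F|^q$ is subharmonic for $q\ge(n-1)/n$. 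The condition $i(\fai)/q(\fai)>(n-1)/n$ is exactly what makes the relevant exponent admissible in a weighted subharmonicity/subordination estimate: one controls the non-tangential maximal function $F^\ast$ of $|F|$ in $L^\fai(\rn)$ by $\|f\|_{L^\fai}+\sum_j\|R_j(f)\|_{L^\fai}$, using a good-$\lambda$ or vector-valued maximal inequality in the Musielak--Orlicz setting (here $q(\fai)$ controls the $A_q$-weight behavior needed for the Fefferman--Stein maximal estimate on $L^\fai$). Since $|u_0^\ast|\le F^\ast$ pointwise and $u_0^\ast$ dominates the radial/grand maximal function of $f$ up to the usual comparisons, one obtains $\|f^\ast\|_{L^\fai}\lesssim\|f\|_{H_{\fai,\,\mathrm{Riesz}}(\rn)}$, hence $f\in H_\fai(\rn)$ with the desired bound.

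\textbf{Main obstacle.} The hard part will be the non-tangential maximal estimate for the conjugate-harmonic vector $F$ in the Musielak--Orlicz norm when $(n-1)/n$ is the critical power, i.e. transplanting the subharmonicity argument of \cite{FS72,Wh76} to a setting where the ``weight'' $\fai(\cdot,t)$ varies with $t$ and need only lie in $A_\infty(\rn)$. Two technical points require care: first, justifying the subharmonicity-plus-maximal-function chain when $q=(n-1)/n$ is below $1$, which forces one to work with $|F|^{q}$ and a localized argument near the boundary together with a Carleson-type or tent-space bound (this is presumably where the auxiliary function $\wz\fai$ of \eqref{eqn deffai} and its type-preserving integral form are exploited); second, ensuring that all constants are uniform in $t$, which is where the hypothesis that $[\fai(\cdot,t)]_{A_{q(\fai)}(\rn)}$ is independent of $t$ (built into \eqref{1.7}) is indispensable. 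Once the maximal estimate is in hand, the passage between the several equivalent maximal functions characterizing $H_\fai(\rn)$ is standard from \cite{K11}.
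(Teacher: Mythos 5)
Your overall architecture matches the paper's: both arguments restrict to the common dense core $H_\fai(\rn)\cap L^2(\rn)$ to sidestep the ``distribution restricted at infinity'' obstruction, prove the two inclusions there with norm control, and pass to the completions. For the hard inclusion $\mathbb{H}_{\fai,\,\mathrm{Riesz}}(\rn)\hookrightarrow H_\fai(\rn)$ you and the paper do essentially the same thing: form the conjugate harmonic vector $F=(P_t\ast f,\,P_t\ast R_1(f),\,\ldots,\,P_t\ast R_n(f))$, invoke the subharmonicity of $|F|^q$ for $q\ge\frac{n-1}{n}$ and the $L^2$-theory of Stein--Weiss, and convert this into control of a maximal function of $f$ in $L^\fai(\rn)$. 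The paper packages this via the harmonic-majorant lemmas and the three isomorphic spaces $H_\fai(\rn)\cong H_{\fai,\,2}(\rr^{n+1}_+)\cong\mathcal{H}_{\fai,\,2}(\rr^{n+1}_+)$ (Theorem \ref{cor equiv 3spaces}), whereas you propose controlling the non-tangential maximal function of $F$ directly via a good-$\lambda$ or vector-valued maximal inequality; these are interchangeable standard routes, and your sketch is at the right level of detail.

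The genuinely different point is the easy direction. You propose showing $\|R_j(f)\|_{L^\fai(\rn)}\ls\|f\|_{H_\fai(\rn)}$ by decomposing $f$ into $(\fai,\,q,\,s)$-atoms and showing that $R_j$ maps atoms to $\fai$-molecules. The paper instead first proves an interpolation theorem for operators between weighted Hardy and weighted Lebesgue scales (Proposition \ref{pro interpolation}) and deduces the boundedness of $R_j$ on $H_\fai(\rn)$ as Corollary \ref{cor bd of Riesz}, explicitly noting that the atom-to-molecule route is also possible but that interpolation yields more reusable byproducts (e.g.\ Corollary \ref{cor bd of CZ} for $\tz$-Calder\'on--Zygmund operators). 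Your route is therefore a valid alternative the paper itself acknowledges.

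One substantive slip: you assert that the hypothesis $\frac{i(\fai)}{q(\fai)}>\frac{n-1}{n}$ is what makes $R_j(a_k)$ an admissible molecule in the easy direction. That is not where the hypothesis is used. Corollary \ref{cor bd of Riesz} (and the underlying weighted-Hardy result of Ky cited there) holds for every $\fai$ satisfying Assumption $(\fai)$ with no restriction on $\frac{i(\fai)}{q(\fai)}$: the number of moments $s$ in the atomic decomposition is tied to $m(\fai)=\lfloor n(\frac{q(\fai)}{i(\fai)}-1)\rfloor$ and the Riesz kernel provides the corresponding decay for any such $s$. The threshold $\frac{n-1}{n}$ enters only in the converse inclusion, precisely because $|F|^q$ is subharmonic for $q\ge\frac{n-1}{n}$ and one needs the window $[\frac{n-1}{n},\,\frac{i(\fai)}{q(\fai)})$ to be nonempty so that both subharmonicity and the lower-type/weight conditions can be satisfied simultaneously (cf.\ Lemmas \ref{lem harmonic majorant of MOHVH} and \ref{lem boudnary value of MOHVH} and Remark \ref{rem range of q}). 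You should correct this attribution, but it does not otherwise invalidate your plan.
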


\begin{remark}\label{r1.1}
(i) We point out that, if $\fai(x,\,t):=t^p$, with $p\in(\frac{n-1}{n},\,1]$,
for all $(x,\,t)\in\rn\times[0,\,\fz)$, then the difficulty in \eqref{eqn difficult}
disappear automatically. Thus, in this case,
there is no need to use the restriction to $L^2(\rn)$. Observe that, in this case,
the range $p\in (\frac{n-1}{n},\,1]$ in Theorem \ref{thm1} coincides
with the range of $p$ in Theorem \ref{thm RTC of HP} obtained
by Fefferman and Stein \cite{FS72}, which is the known best possible.
Moreover, compared with Theorem \ref{thm RTC of HP},
an advantage of Theorem \ref{thm1} is that, in Theorem \ref{thm1}, we
do not assume the a priori assumption that $f$ is a distribution restricted at infinity.

(ii) Recall that Wheeden  in
\cite{Wh76} characterized $H^1_w(\rn)$,  with $w\in A_1(\rn)$, by the first order
Riesz transforms, which corresponds to the case when $\varphi(x,t):=tw(x)$ for all
$x\in{\mathbb R}^n$ and $t\in [0,\infty)$ of Theorem \ref{thm1}; even in
this special case, Theorem \ref{thm1} also widens the range
of weights from $w\in A_1({\mathbb R}^n)$ into $w\in A_\fz({\mathbb R}^n)$
with the sharp range $q(w)\in [1,\frac n{n-1})$,
where $q(w)$ denotes the critical index of the weight $w$ as in \eqref{1.7}.
Moreover, if we let $\fai(x,\,t):=w(x)t^p$, with
$w\in\mathrm{A}_\fz(\rn)$ and $p\in(\frac{q(w)(n-1)}{n},\,1]$,
for all $(x,\,t)\in\rn\times[0,\fz)$, then $\vz$ also satisfies the assumptions
of Theorem \ref{thm1} and Theorem \ref{thm1} with this $\vz$
extends the results obtained by Wheeden in
\cite{Wh76} from the case $p=1$ into the case $p<1$.

(iii) In the sense of (i) and (ii) of this remark,
the range of $\frac{i(\fai)}{q(\fai)}\in(\frac{n-1}{n},\fz)$ in Theorem \ref{thm1}
is the best possible for the first order Riesz transform characterization
of $H_\fai(\rn)$.
\end{remark}

As in the case of $H^p(\rn)$, the proof of Theorem \ref{thm1} depends on the
delicate characterizations of $H_\fai(\rn)$ via some harmonic functions and vectors
defined on the upper half space $\rr^{n+1}_+$. To this end,
we first introduce the Musielak-Orlicz-Hardy spaces $H_\fai(\rr^{n+1}_+)$
of harmonic functions (see Definition \ref{def MOH space} below) and
$\mathcal{H}_\fai(\rr^{n+1}_+)$ of harmonic vectors
(see Definition \ref{def MOHVH space} below).  However, unlike in the
unweighted case, we cannot obtain the isomorphisms among
$H_\fai(\rn)$, $H_\fai(\rr^{n+1}_+)$ and $\mathcal{H}_\fai(\rr^{n+1}_+)$
without additional assumptions on $\fai$.
To remedy this, we introduce two subspace spaces,
$H_{\fai,\,2}(\rr^{n+1}_+)$ and $\mathcal{H}_{\fai,\,2}(\rr^{n+1}_+)$,
respectively, of $H_\fai(\rr^{n+1}_+)$ and $\mathcal{H}_\fai(\rr^{n+1}_+)$ (see Definitions
\ref{def MOHH space p>1} and \ref{def MOHVH space} for their definitions).
Then we establish the isomorphisms among
$H_\fai(\rn)$, $H_{\fai,\,2}(\rr^{n+1}_+)$ and $\mathcal{H}_{\fai,\,2}(\rr^{n+1}_+)$
(see Theorem \ref{cor equiv 3spaces} below).

With these results as preparation, let us sketch the proof of Theorem \ref{thm1}.
To prove the inclusion $H_{\fai,\,\mathrm{Riesz}}(\rn)\subset H_\fai(\rn)$, for any
$f\in \mathbb{H}_{\fai,\,\mathrm{Riesz}}(\rn)$, we construct
a generalized Cauchy-Riemann system via the (conjugate) Poisson
integrals of $f$ and $R_j(f)$, which is proved to be in $\mathcal{H}_{\fai,\,2}(\rr^{n+1}_+)$.
This, together with the isomorphism between $H_\fai(\rn)$ and
$\mathcal{H}_{\fai,\,2}(\rr^{n+1}_+)$, shows
$H_{\fai,\,\mathrm{Riesz}}(\rn)\subset H_\fai(\rn)$.

To prove the inverse inclusion, we only need the radial maximal function
characterization of  $H_\fai(\rn)$ and the boundedness of Riesz
transforms on $H_\fai(\rn)$ (see Proposition \ref{pro radial mc} and
Corollary \ref{cor bd of Riesz} below). Here, to prove Corollary
\ref{cor bd of Riesz}, we  establish an interpolation
of operators on weighted Hardy spaces (see Proposition \ref{pro interpolation} below),
which might be useful in establishing the boundedness of other important
operators on $H_\fai(\rn)$ (see Corollary \ref{cor bd of CZ} below).
We should mention that it is also possible to show Corollary \ref{cor bd of Riesz}
directly via the atomic and the molecular characterizations of $H_\fai(\rn)$,
respectively, in \cite[Theorem 1.1]{K11} and \cite[Theorem 4.13]{hyy}.
However, the approach used in this article brings us more useful byproducts
which have wide applications (see Proposition \ref{pro interpolation} and its applications below).

We now turn to the study of higher order Riesz transform characterizations of $H_\fai(\rn)$.
Recall that there are several different approaches to introduce the higher order Riesz transforms
(see, for example, \cite{Ku04}). In the present article, we focus on two kinds of
higher order Riesz transforms: {\bf i)} the higher order Riesz transforms which are compositions
of first order Riesz transforms; {\bf ii)} the higher order Riesz transforms defined via
homogenous harmonic polynomials.

We start with the first one. Here, to simplify the notation,
we restrict ourselves to $H_\fai(\rn)\cap L^2(\rn)$.

\begin{theorem}\label{thm2}
Let $m\in\nn\cap [2,\,\fz)$ and $\fai$ satisfy Assumption $(\fai)$
with $\frac{i(\fai)}{q(\fai)}>\frac{n-1}{n+m-1}$, where $i(\fai)$ and $q(\fai)$ are
as in \eqref{1.6} and \eqref{1.7}, respectively. Assume further that $f\in L^2(\rn)$.
Then $f\in H_\fai(\rn)$ if and only if there exists a positive
constant $A$ such that, for all $k\in\{1,\,\ldots,\,m\}$ and $\{j_1,\,\ldots,\,j_k\}\subset\{1,\,\ldots,\,n\}$,
$f$, $R_{j_1}\cdots R_{j_k}(f)\in L^\fai(\rn)$
and
\begin{eqnarray}\label{1.9x}
\lf\|f\r\|_{L^\fai(\rn)}+
\dsum_{k=1}^m \dsum_{j_1,\,\ldots,\,j_k=1}^n \lf\|R_{j_1}
\cdots R_{j_k}(f)\r\|_{L^\fai(\rn)}\le A.
\end{eqnarray}
Moreover, there exists a positive constant $C$, independent of $f$,
such that
\begin{eqnarray}\label{1.9}
\frac{1}{C} \lf\| f\r\|_{H_\fai(\rn)}&&\le A\le C \lf\| f\r\|_{H_\fai(\rn)}.
\end{eqnarray}
\end{theorem}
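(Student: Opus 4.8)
The plan is to reduce the higher order statement to the first order case (Theorem \ref{thm1}), using an induction on the order $m$ and a careful Cauchy-Riemann / harmonic-vector argument, much as in the classical Fefferman-Stein approach for $H^p(\rn)$ with $p\le \frac{n-1}{n}$. I would split the proof into the two implications.

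\textbf{Necessity ($f\in H_\fai(\rn)$ implies \eqref{1.9x}).} This direction is the easy one. Assuming $f\in H_\fai(\rn)\cap L^2(\rn)$, I would invoke the boundedness of each first order Riesz transform $R_j$ on $H_\fai(\rn)$ (Corollary \ref{cor bd of Riesz}), together with the identification $H_\fai(\rn)=H_{\fai,\,\mathrm{Riesz}}(\rn)$ furnished by Theorem \ref{thm1} (which applies since $\frac{i(\fai)}{q(\fai)}>\frac{n-1}{n+m-1}\ge\frac{n-1}{n}$ is \emph{not} automatic — wait, it is not; so I instead iterate Corollary \ref{cor bd of Riesz} directly). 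Iterating, for each multi-index $(j_1,\dots,j_k)$ with $k\le m$ one has $R_{j_1}\cdots R_{j_k}(f)\in H_\fai(\rn)\subset L^\fai(\rn)$ with $\|R_{j_1}\cdots R_{j_k}(f)\|_{L^\fai(\rn)}\le \|R_{j_1}\cdots R_{j_k}(f)\|_{H_\fai(\rn)}\ls \|f\|_{H_\fai(\rn)}$; summing over the finitely many choices of $k$ and $(j_1,\dots,j_k)$ gives the upper bound $A\le C\|f\|_{H_\fai(\rn)}$. The fact that all these compositions preserve $L^2(\rn)$ is standard Calder\'on-Zygmund theory.

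\textbf{Sufficiency (\eqref{1.9x} implies $f\in H_\fai(\rn)$).} Here I would argue by induction on $m$, the base case $m=1$ being essentially Theorem \ref{thm1} restricted to $L^2(\rn)$. For the inductive step, suppose \eqref{1.9x} holds for order $m$ with $\frac{i(\fai)}{q(\fai)}>\frac{n-1}{n+m-1}$. If already $\frac{i(\fai)}{q(\fai)}>\frac{n-1}{n+m-2}$, then by the inductive hypothesis applied to $f$ (whose order-$(m-1)$ Riesz data is dominated by $A$) we are done. Otherwise $\frac{n-1}{n+m-1}<\frac{i(\fai)}{q(\fai)}\le\frac{n-1}{n+m-2}$, and the idea is to ``lift'' $f$ one degree: one forms, from the Poisson and conjugate Poisson integrals of the functions $\{R_{j_1}\cdots R_{j_k}(f)\}$, a system of harmonic tensors of rank $m$ on $\rr^{n+1}_+$ satisfying the generalized Cauchy-Riemann (Stein-Weiss) equations, whose ``trace'' reconstructs $f$. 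Exploiting symmetry and trace-freeness of this harmonic tensor, together with the subharmonicity of $|F|^{q}$ for the exponent $q=\frac{n+m-2}{n+m-1}$ (this is precisely where the threshold $\frac{n-1}{n+m-1}$, equivalently $q>\frac{n-1}{n}$ after the index shift, enters), one shows that the relevant components, which are the order-$(m-1)$ Riesz transforms of $f$, satisfy the order-$(m-1)$ estimate \eqref{1.9x} with a constant $\ls A$. Concretely, this uses the vector-valued subordination inequality for systems of conjugate harmonic functions (the analogue of the scalar Fefferman-Stein result, adapted to the Musielak-Orlicz setting via the isomorphism Theorem \ref{cor equiv 3spaces} between $H_\fai(\rn)$ and the harmonic-vector space $\mathcal{H}_{\fai,\,2}(\rr^{n+1}_+)$, and the radial maximal characterization, Proposition \ref{pro radial mc}). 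One then applies the inductive hypothesis at level $m-1$ to conclude $f\in H_\fai(\rn)$ with $\|f\|_{H_\fai(\rn)}\ls A$; chasing constants through the finitely many induction steps gives the lower bound in \eqref{1.9}.

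\textbf{Main obstacle.} The genuinely delicate point is the inductive step in the sufficiency direction: establishing the subharmonicity estimate for the correct power $q=\frac{n+m-2}{n+m-1}$ of the norm of the rank-$m$ harmonic tensor built from the Riesz data, and showing that this tensor genuinely lies in the ``nice'' subspace $\mathcal{H}_{\fai,\,2}(\rr^{n+1}_+)$ so that the isomorphism theorem and the weighted maximal-function machinery apply. In the weighted Musielak-Orlicz framework one cannot, as in the classical setting, freely pass to boundary values via $L^\infty$-type bounds (this is the difficulty recorded in \eqref{eqn difficult}); one must instead keep everything within $L^2(\rn)\cap H_\fai(\rn)$ and pass to the limit only through the completion procedure built into Definition \ref{def RMOHS} and Theorem \ref{cor equiv 3spaces}. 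Verifying that the Cauchy-Riemann tensor constructed from $L^2$-functions has finite $\mathcal{H}_{\fai,\,2}$-quasi-norm controlled by $A$, uniformly, is where the bulk of the technical work lies; the combinatorics of symmetrizing and taking traces of the tensor, while notationally heavy, is routine once the analytic estimate is in hand.
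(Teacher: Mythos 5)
Your necessity direction is correct and matches the paper's: iterate Corollary \ref{cor bd of Riesz} and use Proposition \ref{pro radial mc}; you also correctly observed that Theorem \ref{thm1} itself is not directly applicable since $\frac{n-1}{n+m-1}<\frac{n-1}{n}$ when $m\ge 2$.

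The sufficiency direction as you have set it up does not close. You correctly split into the case $\frac{i(\fai)}{q(\fai)}>\frac{n-1}{n+m-2}$ (where the level-$(m-1)$ statement applies) and the critical case $\frac{n-1}{n+m-1}<\frac{i(\fai)}{q(\fai)}\le\frac{n-1}{n+m-2}$. But in the critical case your plan is to build the rank-$m$ Cauchy--Riemann tensor, deduce that the order-$(m-1)$ Riesz data of $f$ obeys the level-$(m-1)$ bound, and then ``apply the inductive hypothesis at level $m-1$.'' That hypothesis is exactly Theorem \ref{thm2} for $m-1$, which requires $\frac{i(\fai)}{q(\fai)}>\frac{n-1}{n+m-2}$ --- the very inequality you are assuming fails. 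So the induction cannot recurse: in the only case where any new work is needed, the recursive call is unavailable. This is not a fixable bookkeeping issue; it reflects the fact that order-$(m-1)$ Riesz transforms genuinely do not characterize $H_\fai(\rn)$ below the threshold $\frac{n-1}{n+m-2}$, so no amount of massaging will let you fall back to the level-$(m-1)$ theorem.

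The paper's proof is direct, not inductive. From $f$ and \eqref{1.9x} one forms, for every $(j_1,\dots,j_m)\in\{0,\dots,n\}^m$ with $R_0:=I$, the rank-$m$ tensor $F_{j_1,\dots,j_m}(x,t):=\bigl(R_{j_1}\cdots R_{j_m}(f)*P_t\bigr)(x)$; this tensor (not any lower-rank one) satisfies the generalized Cauchy--Riemann system, and Proposition \ref{pro SHP of TVF} gives subharmonicity of $|F|^q$ precisely for $q\ge\frac{n-1}{n+m-1}$. One then runs the rank-$m$ analogues of Lemmas \ref{lem harmonic majorant of MOHVH} and \ref{lem boudnary value of MOHVH} and of Propositions \ref{pro HVC}--\ref{pro MOHtoMOHVH} (Remark \ref{rem about the ToG}(ii) records that the restriction relaxes to $\frac{i(\fai)}{q(\fai)}>\frac{n-1}{n+m-1}$ at rank $m$), obtaining a harmonic majorant $|F(x,t)|^q\le(|F(\cdot,0)|^q*P_t)(x)$ and hence $F\in\mathcal{H}_{\fai,m}(\rr^{n+1}_+)$ with $\|F\|_{\mathcal{H}_{\fai,m}(\rr^{n+1}_+)}\ls A$. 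The rank-$m$ isomorphism theorem then yields $f\in H_\fai(\rn)$ and $\|f\|_{H_\fai(\rn)}\ls A$ directly, with no appeal to the $(m-1)$-case. Your instinct about the subharmonic exponent threshold and the need to stay inside $L^2\cap H_\fai$ is right; the missing idea is that the conclusion must be extracted from the rank-$m$ machinery in one shot, by controlling the component $u_0=f*P_t$ through the harmonic majorant of $|F|^q$, rather than by descending through a recursion whose base hypotheses are out of range.
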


\begin{remark}\label{r1.2}
(i) Let $m,\,k\in\nn$ and $\{j_1,\,\ldots,\,j_m\}\subset\{0,\,\ldots,\,n\}$
satisfy that the number of the non-zero elements in  $\{j_1,\,\ldots,\,j_m\}$ is $k$.
Assume further that $R_0:=I$ is the \emph{identity operator}.
Then, we call $R_{j_1}\cdots R_{j_m}$ a $k$-\emph{order Riesz
transform}. Theorem \ref{thm2} implies that, to obtain the
Riesz transform characterization of $H_\fai(\rn)$ for all $\fai$ satisfying
$\frac{i(\fai)}{q(\fai)}>\frac{n-1}{n+m-1}$, we need all the $k$-order Riesz transforms
for all $k\in\{0,\,\ldots,\,m\}$.

(ii) Compared with the first order Riesz transform characterization
in Theorem \ref{thm1}, the higher order Riesz transform characterization
in Theorem \ref{thm2} does have some advantages. For example,
we can relax the restrictions of $\fai$ on
both the type and the weight assumptions. To be more precise,
by letting $m$ sufficiently large,
one can obtain the Riesz transform characterization of
$H_\fai(\rn)$ for any given $\fai$ satisfying Assumption $(\fai)$.
\end{remark}

The scheme of the proof of Theorem \ref{thm2} is similar to that of Theorem \ref{thm1}.
The main difference is to replace the space $\mathcal{H}_\fai(\rr^{n+1}_+)$ by the
Musielak-Orlicz-Hardy space $\mathcal{H}_{\fai,\,m}(\rr^{n+1}_+)$
of tensor-valued functions (see Definition \ref{def MOTVFH} below), since,
in this case, we have to make use of all
Riesz transforms up to order $m$.

Now, we consider the second kind of higher Riesz transforms from Stein
\cite{St70}. Let $f\in\mathcal{S}(\rn)$, $k\in\nn$ and $\mathcal{P}_k$ be a homogenous
harmonic polynomial of degree $k$. The \emph{Riesz transform of} $f$
\emph{of degree} $k$ \emph{associated with} $\mathcal{P}_k$ is defined by
setting, for all $x\in\rn$,

\begin{eqnarray}\label{1.10x}
\mathcal{R}^{\mathcal{P}_k}(f)(x):=\lim_{\epsilon\to 0^+} \dint_{|y|\ge \epsilon}
\frac{\mathcal{P}_k(y)}{|y|^{n+k}}f(x-y)\,dy.
\end{eqnarray}
For more details on homogenous harmonic polynomials, we refer the reader to
\cite[Section\,3 of Chapter\,3]{St70}.

Furthermore, Kurokawa \cite{Ku04} obtained the following relationships
between two kinds of higher Riesz transforms as above.

\begin{proposition}[\cite{Ku04}]\label{p1.1}
Let $m,\,k\in\nn$ and $\{j_1,\,\ldots,\,j_m\}\subset\{0,\,\ldots,\,n\}$
satisfy that the number of the non-zero elements in  $\{j_1,\,\ldots,\,j_m\}$ is $k$.
Let $f\in L^2(\rn)$. Then, for
each $k$-order Riesz transform $R_{j_1}\cdots R_{j_m}$ as in Remark \ref{r1.2},
there exist $\ell\in\nn$ and a positive constant $C$ such that
\begin{eqnarray*}
R_{j_1}\cdots R_{j_m}(f)=C f+(-1)^{k}\dsum_{j=0}^\ell \mathcal{R}^{\mathcal{P}_j}(f),
\end{eqnarray*}
where $\mathcal{P}_j$ ranges over all
the homogenous harmonic polynomials of degree $k-2j$ and
$\mathcal{R}^{\mathcal{P}_j}$ is the higher order Riesz transform
of degree $k-2j$ associated with $\mathcal{P}_j$ defined as in \eqref{1.10x}.
\end{proposition}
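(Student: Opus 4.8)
The plan is to move everything to the Fourier transform side and reduce the asserted operator identity to an elementary identity between Fourier multiplier symbols. Since $f\in L^2(\rn)$, the composition $R_{j_1}\cdots R_{j_m}$ is a bounded Fourier multiplier on $L^2(\rn)$; so is each $\mathcal{R}^{\mathcal{P}_j}$, because the principal-value kernel $\mathcal{P}_j(\cdot)/|\cdot|^{n+\deg\mathcal{P}_j}$ in \eqref{1.10x} defines, once the required cancellation is checked, a Calder\'on--Zygmund convolution operator (see \cite[Chapter\,3]{St70}). Hence it suffices to verify the identity after applying the Fourier transform. Recalling from \eqref{eqn RZ} that $R_j$ has symbol $-i\xi_j/|\xi|$ while $R_0=I$ has symbol $1$, and that precisely $k$ among $j_1,\ldots,j_m$ are non-zero, we see that $R_{j_1}\cdots R_{j_m}$ is the Fourier multiplier with symbol $(-i)^k Q(\xi)/|\xi|^k$, where $Q(\xi):=\prod_{\ell:\,j_\ell\neq0}\xi_{j_\ell}$ is a homogeneous polynomial of degree $k$.

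Two classical facts then complete the argument. First, the decomposition of a homogeneous polynomial into solid spherical harmonics (see \cite[Chapter\,3]{St70}): there are homogeneous \emph{harmonic} polynomials $\wz{\mathcal{P}}_{k-2j}$ of degree $k-2j$, for integers $j$ with $0\le j\le k/2$, such that $Q(\xi)=\sum_{0\le j\le k/2}|\xi|^{2j}\wz{\mathcal{P}}_{k-2j}(\xi)$; dividing by $|\xi|^{k}$ yields $Q(\xi)/|\xi|^{k}=\sum_{0\le j\le k/2}\wz{\mathcal{P}}_{k-2j}(\xi)/|\xi|^{k-2j}$. Second, the Hecke--Bochner type identity (again \cite[Chapter\,3]{St70}): for a homogeneous harmonic polynomial $\mathcal{P}_d$ of degree $d\ge1$ one has $\int_{\mathbb{S}^{n-1}}\mathcal{P}_d\,d\sigma=0$, so the limit in \eqref{1.10x} exists, and $\mathcal{R}^{\mathcal{P}_d}$ is exactly the Fourier multiplier with symbol $c_{n,d}\,\mathcal{P}_d(\xi)/|\xi|^{d}$ for an explicit \emph{non-zero} constant $c_{n,d}$ (a power of $-i$ times a quotient of Gamma functions).

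Combining the two, the summand of $Q(\xi)/|\xi|^{k}$ corresponding to $k-2j=0$ — present exactly when $k$ is even — is a constant and produces the term $Cf$, while for every $j$ with $d:=k-2j\ge1$ the summand $\wz{\mathcal{P}}_d(\xi)/|\xi|^{d}$ equals $c_{n,d}^{-1}$ times the symbol of $\mathcal{R}^{\wz{\mathcal{P}}_d}$. Taking $\mathcal{P}_j$ to be the real scalar multiple of $\wz{\mathcal{P}}_{k-2j}$ obtained by absorbing the constant $(-i)^k[(-1)^kc_{n,k-2j}]^{-1}$ — still a homogeneous harmonic polynomial of degree $k-2j$, since a scalar multiple of a harmonic polynomial is harmonic — and then factoring out the overall sign $(-1)^k$, the symbol identity reads exactly as claimed, with $\ell:=\lfz(k-1)/2\rfz$ and with $C$ the (signed) constant coming from the degree-zero component, which vanishes when $k$ is odd. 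The one genuinely substantial ingredient is the symbol computation for $\mathcal{R}^{\mathcal{P}_d}$ — namely that its multiplier is a \emph{non-zero} constant multiple of $\mathcal{P}_d(\xi)/|\xi|^{d}$ — which rests on the rotation equivariance of $\mathcal{R}^{\mathcal{P}_d}$, the degree-zero homogeneity of its symbol, and the Funk--Hecke (Bochner) formula for spherical harmonics, all available in \cite[Chapter\,3]{St70}; once this is granted, what remains is only careful bookkeeping of the constants so that the factor $(-1)^k$ appears out in front and the degree-zero component is correctly isolated as $Cf$.
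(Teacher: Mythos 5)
Your Fourier-side argument is correct and is the natural way to prove this identity. The paper does not give a proof of Proposition \ref{p1.1}; it is simply cited from Kurokawa \cite{Ku04}, so there is no in-paper argument to compare against, but your route---computing the symbol $(-i)^k Q(\xi)/|\xi|^k$ with $Q(\xi)=\prod_{\ell:\,j_\ell\neq 0}\xi_{j_\ell}$, decomposing $Q$ into solid spherical harmonics $Q=\sum_{0\le j\le k/2}|\xi|^{2j}\wz{\mathcal P}_{k-2j}$, and invoking the Hecke--Bochner computation that $\mathcal R^{\mathcal P_d}$ has symbol $\gamma_d\,\mathcal P_d(\xi)/|\xi|^d$ with $\gamma_d=(-i)^d\pi^{n/2}\Gamma(d/2)/\Gamma((n+d)/2)\neq 0$ from \cite[Chapter\,3]{St70}---is exactly the standard derivation and all the ingredients you cite are the right ones. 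Your bookkeeping also correctly shows that the absorbed scalars are real (since $(-i)^k\gamma_{k-2j}^{-1}(-1)^{-k}=(-1)^{k-j}/|\gamma_{k-2j}|$), so the renormalized $\mathcal P_j$ remain genuine real homogeneous harmonic polynomials.

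One caveat worth flagging, which you already touched on: the constant $C$ coming from the degree-zero component is $|\mathbb{S}^{n-1}|^{-1}\int_{\mathbb{S}^{n-1}}Q\,d\sigma$ (up to the sign bookkeeping), and this vanishes whenever $k$ is odd, and can also vanish for even $k$ (e.g.\ $Q(\xi)=\xi_1^3\xi_2$). Thus the statement that $C$ is a \emph{positive} constant is not literally correct in full generality; what your proof yields is a real constant $C$, possibly zero, which is what is actually needed downstream (Corollary \ref{c1.1}). This is an imprecision in the formulation of the proposition as quoted, not a gap in your argument.
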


Combining Proposition \ref{p1.1} and Theorem \ref{thm2},
we conclude the following corollary, which establishes the Riesz transform
characterization of $H_\fai(\rn)$ in terms of
higher Riesz transforms defined via homogenous harmonic
polynomials.

\begin{corollary}\label{c1.1}
Let $m\in\nn\cap [2,\,\fz)$, $k\in\{0,\,\ldots,\,m\}$
and $\fai$ satisfy Assumption $(\fai)$
with $\frac{i(\fai)}{q(\fai)}>\frac{n-1}{n+m-1}$,
where $i(\fai)$ and $q(\fai)$ are as in \eqref{1.6}
and \eqref{1.7}, respectively. Suppose that $f\in L^2(\rn)$.
Then $f\in H_\fai(\rn)$ if and only if there exists a positive constant $A$ such that,
for all homogenous
harmonic polynomials $\mathcal{P}_j$ of degree $k$,
$f$, $\mathcal{R}^{\mathcal{P}_j}(f)\in L^\fai(\rn)$
and
\begin{eqnarray*}
\lf\| f\r\|_{L^\fai(\rn)}
+\sum_j\lf\| \mathcal{R}^{\mathcal{P}_j}(f)\r\|_{L^\fai(\rn)}
\le A.
\end{eqnarray*}
Moreover, there exists a positive constant $C$, independent of $f$,
such that
\begin{eqnarray*}
\frac{1}{C} \lf\| f\r\|_{H_\fai(\rn)}&&\le A \le C \lf\| f\r\|_{H_\fai(\rn)},
\end{eqnarray*}
where $\mathcal{P}_j$ ranges over all the homogenous harmonic polynomials
of degree $k$ with $k\in\{0,\,\ldots,\,m\}$.
\end{corollary}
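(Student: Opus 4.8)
The plan is to deduce Corollary \ref{c1.1} from Theorem \ref{thm2} and Proposition \ref{p1.1}, using the fact that the two kinds of higher order Riesz transforms are, modulo the identity operator, finite linear combinations of one another. The key observation is that both directions of the equivalence in Corollary \ref{c1.1} reduce, via these linear-combination relations, to the corresponding statements already established in Theorem \ref{thm2}, once we know that $\mathcal{R}^{\mathcal{P}_k}$ is bounded on $L^\fai(\rn)$ (or more precisely on $H_\fai(\rn)$) for each homogenous harmonic polynomial $\mathcal{P}_k$. Since $\mathcal{R}^{\mathcal{P}_k}$ is a Calder\'on--Zygmund operator with a smooth, homogeneous-of-degree-zero kernel away from the origin, its boundedness on $H_\fai(\rn)\cap L^2(\rn)$ follows from Corollary \ref{cor bd of CZ} (the boundedness of Calder\'on--Zygmund operators on $H_\fai(\rn)$ obtained via the interpolation in Proposition \ref{pro interpolation}), exactly as in the case of $R_j$ in Corollary \ref{cor bd of Riesz}.

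First I would prove the \emph{necessity}: if $f\in H_\fai(\rn)\cap L^2(\rn)$, then, by Corollary \ref{cor bd of CZ}, $\mathcal{R}^{\mathcal{P}_j}(f)\in H_\fai(\rn)\subset L^\fai(\rn)$ with $\|\mathcal{R}^{\mathcal{P}_j}(f)\|_{L^\fai(\rn)}\lesssim\|\mathcal{R}^{\mathcal{P}_j}(f)\|_{H_\fai(\rn)}\lesssim\|f\|_{H_\fai(\rn)}$, and summing over the finitely many linearly independent homogenous harmonic polynomials $\mathcal{P}_j$ of degree $k$ (a finite-dimensional space) together with the trivial bound $\|f\|_{L^\fai(\rn)}\lesssim\|f\|_{H_\fai(\rn)}$ yields the upper estimate $A\le C\|f\|_{H_\fai(\rn)}$. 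For the \emph{sufficiency}, suppose $f\in L^2(\rn)$ and the displayed inequality holds for all homogenous harmonic polynomials $\mathcal{P}_j$ of degree $k$, for every $k\in\{0,\,\ldots,\,m\}$. I would then invoke Proposition \ref{p1.1}: each $k$-order Riesz transform $R_{j_1}\cdots R_{j_m}(f)$ equals $Cf+(-1)^k\sum_{j=0}^\ell\mathcal{R}^{\mathcal{P}_j}(f)$, where the $\mathcal{P}_j$ are homogenous harmonic polynomials of degrees $k-2j$, all of which lie among the families appearing in the hypothesis (note $k-2j\in\{0,\,\ldots,\,k\}\subset\{0,\,\ldots,\,m\}$). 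Hence each term on the right belongs to $L^\fai(\rn)$ with norm controlled by $A$, so $R_{j_1}\cdots R_{j_m}(f)\in L^\fai(\rn)$ and $\sum_{k=1}^m\sum_{j_1,\ldots,j_k=1}^n\|R_{j_1}\cdots R_{j_k}(f)\|_{L^\fai(\rn)}\lesssim A$; together with $\|f\|_{L^\fai(\rn)}\le A$ this verifies the hypothesis \eqref{1.9x} of Theorem \ref{thm2} (with a possibly enlarged constant $CA$). Theorem \ref{thm2} then gives $f\in H_\fai(\rn)$ with $\|f\|_{H_\fai(\rn)}\le CA$, completing the proof of \eqref{1.9} for this setting.

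One small technical point that needs care: the decomposition in Proposition \ref{p1.1} involves homogenous harmonic polynomials of degrees $k-2j$ rather than a fixed degree, so in passing from the hypothesis on degree-$k$ polynomials to the hypothesis of Theorem \ref{thm2} I should make sure the statement is applied with the right index bookkeeping — this is why the statement of Corollary \ref{c1.1} ranges $k$ over $\{0,\,\ldots,\,m\}$ rather than fixing a single $k$, and it is essential that \emph{all} these degrees be included. A second routine point is the passage between $L^\fai(\rn)$-membership of $\mathcal{R}^{\mathcal{P}_j}(f)$ as a function and its $H_\fai(\rn)$-membership: for $f\in L^2(\rn)$ one has $\mathcal{R}^{\mathcal{P}_j}(f)\in L^2(\rn)$, so the Calder\'on--Zygmund boundedness on $H_\fai(\rn)\cap L^2(\rn)$ applies directly and the two norms are comparable there.

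The main obstacle, such as it is, is not in the deduction itself — which is essentially a finite-dimensional linear-algebra manipulation feeding into Theorem \ref{thm2} — but in having available the boundedness of the higher order Riesz transforms $\mathcal{R}^{\mathcal{P}_k}$ on $H_\fai(\rn)$, i.e., in knowing that the general Calder\'on--Zygmund boundedness statement (Corollary \ref{cor bd of CZ}) covers these operators. Since their kernels $\mathcal{P}_k(y)/|y|^{n+k}$ are $C^\fz$ away from the origin, homogeneous of degree $-n$, and satisfy the mean-value (cancellation) condition over spheres because $\mathcal{P}_k$ is a nontrivial harmonic polynomial of positive degree, they fit the standard Calder\'on--Zygmund framework, so no genuine difficulty arises; the proof is therefore short and largely bookkeeping once Theorems \ref{thm2} and \ref{p1.1} and Corollary \ref{cor bd of CZ} are in hand.
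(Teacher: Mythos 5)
Your overall strategy matches the paper's: Corollary \ref{c1.1} is intended to follow by combining Proposition \ref{p1.1} with Theorem \ref{thm2}, and your sufficiency argument (expand each $k$-order $R_{j_1}\cdots R_{j_m}$ via Proposition \ref{p1.1} as a finite linear combination of $\mathcal{R}^{\mathcal{P}_j}$ of degrees $k-2j\le m$ plus a constant multiple of $f$, then feed the resulting bound into Theorem \ref{thm2}) is exactly the right deduction, including your index-bookkeeping caveat.

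However, there is a genuine gap in the necessity direction. You invoke Corollary \ref{cor bd of CZ} to conclude that $\mathcal{R}^{\mathcal{P}_j}$ is bounded on $H_\fai(\rn)$, but that corollary carries \emph{extra} hypotheses on $\fai$ beyond Assumption $(\fai)$: it requires the existence of $\dz\in(0,1]$, $q\in[1,\tfrac{i(\fai)(n+\dz)}{n})$ and $r\in(\tfrac{n+\dz}{n+\dz-nq},\fz)$ with $\fai(\cdot,t)\in A_q(\rn)\cap \mathrm{RH}_r(\rn)$ for all $t$. In particular the constraint $q(\fai)<q<\tfrac{i(\fai)(n+\dz)}{n}$ forces $\tfrac{i(\fai)}{q(\fai)}>\tfrac{n}{n+\dz}\ge\tfrac{n}{n+1}$, which is strictly stronger than the hypothesis $\tfrac{i(\fai)}{q(\fai)}>\tfrac{n-1}{n+m-1}$ of Corollary \ref{c1.1} once $m\ge 2$. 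So Corollary \ref{cor bd of CZ} is simply not available under the stated assumptions, and that step of the necessity argument does not go through as written.

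The fix is cheap and stays within the paper's toolbox: the Fourier multiplier of $\mathcal{R}^{\mathcal{P}_k}$ is, up to a constant, $\mathcal{P}_k(\xi)/|\xi|^k$ restricted to $\mathbb{S}^{n-1}$, hence $C^\fz(\mathbb{S}^{n-1})$; so Proposition \ref{pro bd FourierM}, which holds under Assumption $(\fai)$ alone, gives boundedness of $\mathcal{R}^{\mathcal{P}_k}$ on $H_\fai(\rn)$ directly. Equivalently, since $\mathcal{P}_k(\xi)/|\xi|^k$ is a homogeneous polynomial of degree $k$ in the variables $\xi_j/|\xi|$, the operator $\mathcal{R}^{\mathcal{P}_k}$ is a finite linear combination of the $k$-order Riesz transforms $R_{j_1}\cdots R_{j_k}$; the necessity half of Theorem \ref{thm2} (or iterating Corollary \ref{cor bd of Riesz}, which again holds under Assumption $(\fai)$ alone) then immediately yields $\|\mathcal{R}^{\mathcal{P}_k}(f)\|_{L^\fai(\rn)}\ls\|f\|_{H_\fai(\rn)}$. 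Either of these replacements closes the gap and makes the rest of your argument complete.
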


Observe that, in Corollary \ref{c1.1}, we use less Riesz transforms
than Theorem \ref{thm2} to characterize $H_\fai(\rn)$,
since not every polynomial of order $k$ is homogeneous harmonic.
Moreover, there arises a natural question for Riesz transform
characterizations of $H_\fai(\rn)$: in these characterizations, can we
use Riesz transforms as less as possible? This question can not be solved
directly by the methods used to prove Theorems \ref{thm1} and
\ref{thm2}, because the heart of these methods relies on the
subharmonic property of the absolute value of a harmonic vector (resp.
tensor-valued function) satisfying the generalized Cauchy-Riemann equation.
Moreover, to construct such a harmonic vector (resp. tensor-valued function),
we always need all the Riesz transforms up to a fixed order.

Corollary \ref{c1.1} provides a method to solve the above problem via replacing all
Riesz transforms up to order $m$ by Riesz transforms defined via
homogenous harmonic polynomials. Another method is from Uchiyama \cite{Uc84,Uc01},
which avoids the use of the subharmonic property by using the Fourier multiplier
and has a close relationship with the constructive proof of the Fefferman-Stein
decomposition of $\mathop\mathrm{BMO}(\rn)$.

The following theorem establishes the odd order Riesz transform
characterization of $H_\fai(\rn)$ based on the method of Uchiyama.

\begin{theorem}\label{thm3}
Let $k\in\nn$ be odd, $\fai$ satisfy Assumption $(\fai)$ and
$\frac{i(\fai)}{q(\fai)}>\max\{p_0,\,\frac{1}{2}\}$,
where $i(\fai)$, $q(\fai)$ and $p_0$ are, respectively, as in \eqref{1.6},
\eqref{1.7} and  Proposition \ref{pro est MFFM} below.
Let $f\in L^2(\rn)$. Then $f\in H_\fai(\rn)$ if and only if,
for all $\{j_1,\,\ldots,\,j_k\}\subset \{1,\,\ldots,\,n\}$, $f$ and
$R_{j_1}\cdots R_{j_k}(f)\in L^\fai(\rn)$.
Moreover, there exists a positive constant $C$, independent of $f$, such that
\begin{eqnarray}\label{1.10}
\frac{1}{C} \lf\| f\r\|_{H_\fai(\rn)}&&\le \lf\| f\r\|_{L^\fai(\rn)}
+\dsum_{j_1,\,\ldots,\,j_k=1}^n \lf\| R_{j_1}\cdots R_{j_k}(f)\r\|_{L^\fai(\rn)}\\
\nonumber&&\le C \lf\| f\r\|_{H_\fai(\rn)}.
\end{eqnarray}
\end{theorem}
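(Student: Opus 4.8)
My plan is to establish the two inclusions of Theorem \ref{thm3} separately: the ``only if'' part follows from the boundedness of the Riesz transforms on $H_\fai(\rn)$, while the ``if'' part is proved by Uchiyama's Fourier--multiplier method, for which the decisive input is Proposition \ref{pro est MFFM}. For the ``only if'' direction, let $f\in H_\fai(\rn)\cap L^2(\rn)$. Since the non-tangential grand maximal function dominates $|f|$ almost everywhere, $\|f\|_{L^\fai(\rn)}\le\|f\|_{H_\fai(\rn)}$. For any $\{j_1,\,\ldots,\,j_k\}\subset\{1,\,\ldots,\,n\}$, the operator $R_{j_1}\cdots R_{j_k}$ is a composition of first order Riesz transforms and is itself a Calder\'on--Zygmund operator, so by Corollary \ref{cor bd of Riesz} (or Corollary \ref{cor bd of CZ}) it maps $H_\fai(\rn)\cap L^2(\rn)$ boundedly into itself; hence $R_{j_1}\cdots R_{j_k}(f)\in H_\fai(\rn)\cap L^2(\rn)$ and $\|R_{j_1}\cdots R_{j_k}(f)\|_{L^\fai(\rn)}\le\|R_{j_1}\cdots R_{j_k}(f)\|_{H_\fai(\rn)}\ls\|f\|_{H_\fai(\rn)}$. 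Summing over all multi-indices gives the second inequality in \eqref{1.10}.

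For the ``if'' direction, let $f\in L^2(\rn)$ with $f,\,R_{j_1}\cdots R_{j_k}(f)\in L^\fai(\rn)$ for all $\{j_1,\,\ldots,\,j_k\}\subset\{1,\,\ldots,\,n\}$. I would first assemble the relevant system of Fourier symbols on $\rn\setminus\{0\}$, namely $\Phi_0\equiv1$ together with $\Phi_{(j_1,\,\ldots,\,j_k)}(\xi):=(-i)^k\,\xi_{j_1}\cdots\xi_{j_k}/|\xi|^k$, the latter being (a constant multiple of) the symbol of the order-$k$ Riesz transform $R_{j_1}\cdots R_{j_k}$; each of these symbols is $C^\fz$ and homogeneous of degree $0$. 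The key observation, which is where the hypothesis that $k$ is odd enters, is that every non-constant symbol is odd, so that for each $\xi\in\rn\setminus\{0\}$ the vectors $\Phi(\xi)$ and $\Phi(-\xi)$ coincide in the $\Phi_0$-entry and are negatives of each other in every remaining entry; hence they are linearly dependent only if $\xi_{j_1}\cdots\xi_{j_k}=0$ for all multi-indices, which is impossible because
\[
\dsum_{j_1,\,\ldots,\,j_k=1}^n\lf(\xi_{j_1}\cdots\xi_{j_k}\r)^2=\lf(\dsum_{j=1}^n\xi_j^2\r)^k=|\xi|^{2k}\ne0.
\]
Thus this system satisfies Uchiyama's non-degeneracy (rank-two) condition uniformly on the unit sphere, which is precisely the structural hypothesis under which Proposition \ref{pro est MFFM} is designed to operate.

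Granting the non-degeneracy just verified, I would invoke Proposition \ref{pro est MFFM} to obtain the exponent $p_0\in(0,\,1)$ together with, for $f\in L^2(\rn)$, a control of the non-tangential grand maximal function $f^\ast$ by the Hardy--Littlewood maximal function $M$ of the $p_0$-th powers of $f$ and of its order-$k$ Riesz transforms, of the form
\[
f^\ast(x)\ls\lf[M\lf(|f|^{p_0}+\dsum_{j_1,\,\ldots,\,j_k=1}^n\lf|R_{j_1}\cdots R_{j_k}(f)\r|^{p_0}\r)(x)\r]^{1/p_0},\qquad x\in\rn.
\]
Because $\frac{i(\fai)}{q(\fai)}>\max\{p_0,\,\frac12\}$, the Musielak--Orlicz function $\fai_{p_0}(x,\,t):=\fai(x,\,t^{1/p_0})$ satisfies $\frac{i(\fai_{p_0})}{q(\fai_{p_0})}=\frac{i(\fai)}{p_0\,q(\fai)}>1$, which is exactly the condition guaranteeing that $M$ is bounded on $L^{\fai_{p_0}}(\rn)$; inserting the displayed inequality into $\|\cdot\|_{L^\fai(\rn)}$ and using the lower and upper types of $\fai$ then yields
\[
\|f\|_{H_\fai(\rn)}=\|f^\ast\|_{L^\fai(\rn)}\ls\|f\|_{L^\fai(\rn)}+\dsum_{j_1,\,\ldots,\,j_k=1}^n\lf\|R_{j_1}\cdots R_{j_k}(f)\r\|_{L^\fai(\rn)},
\]
so that $f\in H_\fai(\rn)$ and the first inequality in \eqref{1.10} holds.

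I expect the main obstacle to lie entirely inside Proposition \ref{pro est MFFM}: passing from the algebraic non-degeneracy condition to the above pointwise maximal-function domination is the heart of Uchiyama's constructive method -- requiring delicate Fourier--multiplier estimates and a stopping-time decomposition over dyadic cubes, together with the fact that $f$ itself must appear alongside the Riesz transforms because the non-degeneracy uses $\Phi_0\equiv1$ -- and transplanting it to the Musielak--Orlicz framework forces one to keep all implicit constants independent of $t$ in $\fai(\cdot,\,t)$, and to check that the resulting threshold $p_0$ is simultaneously compatible, through $\frac{i(\fai)}{q(\fai)}>\max\{p_0,\,\frac12\}$, with the weighted boundedness of $M$ invoked above and with the a priori regularity $f\in L^2(\rn)$ needed to run the argument.
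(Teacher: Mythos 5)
Your argument is correct and follows essentially the same route as the paper: the ``only if'' direction from the boundedness of higher order Riesz transforms on $H_\fai(\rn)$, and the ``if'' direction from Uchiyama's estimate in Proposition \ref{pro est MFFM} applied to the system $\mathcal{K}=\{I\}\cup\{R_{j_1}\cdots R_{j_k}\}_{j_1,\ldots,j_k=1}^n$. Your verification that the rank-two condition holds exactly when $k$ is odd (since $\tz_{j_1,\ldots,j_k}(-\xi)=(-1)^k\tz_{j_1,\ldots,j_k}(\xi)$ and $\sum_{j_1,\ldots,j_k}|\tz_{j_1,\ldots,j_k}(\xi)|^2=1$ on $\mathbb{S}^{n-1}$) reproduces the content of Remark \ref{rem est MFFM}(ii) and is sound. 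Two small inaccuracies are worth correcting. First, Proposition \ref{pro est MFFM} gives the pointwise bound $\mathcal{M}_\phi(\mathcal{K}(f))\le C\,\mathcal{M}_{p_0}\bigl(\mathcal{M}_{1/2}(|\mathcal{K}(f)|)\bigr)$ with the \emph{iterated} maximal function $\mathcal{M}_{p_0}\circ\mathcal{M}_{1/2}$, not the single $p_0$-order maximal function you display; this is precisely where both pieces of the hypothesis $\frac{i(\fai)}{q(\fai)}>\max\{p_0,\,\frac12\}$ get used -- $\frac{i(\fai)}{q(\fai)}>\frac12$ for $\mathcal{M}_{1/2}$ and $>p_0$ for $\mathcal{M}_{p_0}$ -- and dropping the $\mathcal{M}_{1/2}$ factor would make the $\frac12$ appear superfluous. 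Second, the proposition bounds $\mathcal{M}_\phi(\mathcal{K}(f))$ (which controls $\mathcal{M}_\phi(f)$ pointwise because $I\in\mathcal{K}$), not $f^\ast$ directly; the passage to $\|f\|_{H_\fai(\rn)}$ then goes through the radial maximal function characterization of Proposition \ref{pro radial mc}. Finally, the paper's write-up inserts an auxiliary smooth partition of unity $\psi+\sum\tz_{j_1,\ldots,j_k}\psi_{j_1,\ldots,j_k}\equiv1$ on $\mathbb{S}^{n-1}$ before applying Uchiyama's estimate; your more direct route $\|f\|_{H_\fai(\rn)}\sim\|\mathcal{M}_\phi(f)\|_{L^\fai(\rn)}\le\|\mathcal{M}_\phi(\mathcal{K}(f))\|_{L^\fai(\rn)}$ makes that step unnecessary and is a bit cleaner.
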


\begin{remark}\label{rem thm3}
(i) We point out that, in \cite{Fe74}, Fefferman conjectured that
``nice" conjugate systems, such as the second order Riesz
transforms, would also give a characterization of $H^1(\rr^2)$.
However, Gandulfo, Garc\'ia-Cuerva and Taibleson \cite{GGT76}
have constructed a counterexample to show that even order Riesz transforms
fail to characterize $H^1(\mathbb{R}^2)$. This justifies the characterization
of $H_\fai(\rn)$ via odd order Riesz transforms.

(ii) Compared with Corollary \ref{c1.1}, in Theorem \ref{thm3}, we use much less
Riesz transforms. However, a shortcoming is that we can only deal with the case
$\frac{i(\fai)}{q(\fai)}>\max\{p_0,\,\frac{1}{2}\}$, where $p_0\in(0,\,1)$
is described in Proposition \ref{pro est MFFM} below.

(iii) We point out that Theorem \ref{thm2}, Corollary \ref{c1.1}
and Theorem \ref{thm3} have variants as in Theorem \ref{thm1},
the details being omitted.
\end{remark}

The organization of this article is as follows.

In Section \ref{s2}, we give out the proof of Theorem \ref{thm1}.
To this end, we establish some necessary and auxiliary results.  More precisely,
in Subsection \ref{s21}, we establish
the radial maximal function and the
Poisson integral characterizations of $H_\fai(\rn)$ (see Propositions
\ref{pro radial mc} and \ref{pro PIC}  below).

In Subsection \ref{s22}, we introduce a Musielak-Orlicz-Hardy space $H_\fai(\rr^{n+1}_+)$
of harmonic functions (see Definition \ref{def MOH space} below) and show that
the subspace $H_{\fai,\,2}(\rr^{n+1}_+)$ of $H_\fai(\rr^{n+1}_+)$
is isomorphic to $H_\fai(\rn)$ (see Proposition \ref{pro HFC}  below).

In Subsection \ref{s23}, we introduce a Musielak-Orlicz-Hardy space
$\mathcal{H}_\fai(\rr^{n+1}_+)$ of harmonic vectors which satisfy
the generalized Cauchy-Riemann equation \eqref{eqn GCR equation}
(see Definition \ref{def MOHVH space}  below), then we show that
the elements in $\mathcal{H}_\fai(\rr^{n+1}_+)$ have harmonic majorant
and boundary value on $\rn$ (see Lemmas \ref{lem harmonic majorant of MOHVH}
and \ref{lem boudnary value of MOHVH} below).
Moreover, by establishing relations among $\mathcal{H}_\fai(\rr^{n+1}_+)$,
$H_\fai(\rr^{n+1}_+)$ and $H_\fai(\rn)$ (see Propositions \ref{pro HVC} and
\ref{pro MOHtoMOHVH} below),
we obtain the isomorphisms among the spaces $H_\fai(\rn)$,
$H_{\fai,\,2}(\rr^{n+1}_+)$ and $\mathcal{H}_{\fai,\,2}(\rr^{n+1}_+)$
(see Theorem \ref{cor equiv 3spaces} below).

In Subsection \ref{s24}, we prove Theorem \ref{thm1} via Theorem \ref{cor equiv 3spaces}. Furthermore, we also
need the boundedness of Riesz transforms on $H_\fai(\rn)$ (see Corollary \ref{cor bd of Riesz}
 below), which is proved by establishing an interpolation of operators on weighted Hardy spaces
(see Proposition \ref{pro interpolation} below).
We point out that this interpolation result may be
of independent interest, since, by which, we can obtain the boundedness of
many important operators from harmonic analysis and partial differential equations
on $H_\fai(\rn)$; see Corollary \ref{cor bd of CZ} for
the case of Calder\'on-Zygmund operators.

In Section \ref{s3}, we first introduce a Musielak-Orlicz-Hardy space
$\mathcal{H}_{\fai,\,m}(\rr^{n+1}_+)$
of tensor-valued functions (see Definition \ref{def MOTVFH} below),
which plays the same role as $\mathcal{H}_\fai(\rr^{n+1}_+)$
in the first order Riesz transform characterizations (see Remark \ref{r1.2}(ii) below).
Then we prove Theorem \ref{thm2} by a way
similar to that used in the proof of Theorem \ref{thm1}. Finally, by using
an estimate of Uchiyama \cite{Uc84}, we prove Theorem \ref{thm3}.

We end this section by making some conventions on notation.
Throughout the whole article, we always set $\nn:=\{1,2,\ldots\}$
and $\zz_+:=\nn\cup\{0\}$. The {\it differential operator}
$\frac{\pat^{|\az|}}{\pat x_1^{\az_1}\cdots \pat x_n^{\az_n}}$
is denoted simply by $\pat^\az$,
where $\az:=(\az_1,\ldots,\az_n)\in\zz_+^n$ and
$|\az|:=\az_1+\cdots+\az_n$.
Let $C_c^\fz(\rn)$ be the \emph{set of smooth functions with compact
support}. We use $C$ to denote a {\it positive
constant} that is independent of the main parameters involved but
whose value may differ from line to line. We use $C_{(\az,\,\bz,\,\ldots)}$
to denote a positive constant depending on the parameters $\az$,
$\bz$.... If $f\le Cg$, we then write $f\ls
g$ and, if $f\ls g\ls f$, we then write $f\sim g$. For all
$x\in\rr^n$ and $r\in(0,\fz),$ let
$B(x,r):=\{y\in\rr^n:\ |x-y|<r\}.$
Also, for any set $E\subset \rn$, we use $E^\complement$ to denote
$\rn\setminus E$ and $\chi_E$ its {\it characteristic function}, respectively.
For any $s\in \rr$, we let $\lfloor s\rfloor$ to denote the \emph{maximal integer
not more than} $s$. Finally, for $q\in[1,\fz]$, $q':=\frac{q}{q-1}$
denotes the \emph{conjugate exponent} of $q$.

\section{First order Riesz transform characterizations\label{s2}}

\hskip\parindent In this section, we give a complete proof of Theorem \ref{thm1}.
In order to achieve this goal, we need to introduce Musielak-Orlicz-Hardy
type spaces $H_{\fai}(\rr^{n+1}_+)$ of harmonic functions and
$\mathcal{H}_{\fai}(\rr^{n+1}_+)$ of harmonic vectors on the upper half space
$\rr^{n+1}_+$, and establish their relations with $H_{\fai}(\rn)$.
The first three subsections of this section are devoted to the study of these relations.
After this, we prove Theorem \ref{thm1} in Subsection \ref{s24}.

\subsection{Radial maximal function and Poisson integral
characterizations of $H_{\fai}(\rn)$\label{s21}}

\hskip\parindent
Let $p\in[1,\,\fz)$ and $w\in A_p(\rn)$.
It is well known that there exist positive constants $\dz\in(0,\,1)$ and
$C$ such that, for all balls $B_1,\,B_2\subset\rn$ with $B_1\subset B_2$,
\begin{eqnarray}\label{eqn db property}
\frac{w(B_2)}{w(B_1)}\le C \lf(\frac{|B_2|}{|B_1|}\r)^{p}
\end{eqnarray}
and
\begin{eqnarray}\label{eqn rdb property}
\frac{w(B_1)}{w(B_2)}\le C \lf(\frac{|B_1|}{|B_2|}\r)^{\dz}
\end{eqnarray}
(see, for example, \cite{GR85} for more details on the above two inequalities and
other properties of Muckenhoupt weights).

Now, let $\phi\in \mathcal{S}(\rn)$ satisfy
\begin{eqnarray}\label{eqn integral=1}
\int_\rn \phi(x)\,dx=1.
\end{eqnarray}
For any distribution $f\in \mathcal{S}'(\rn)$,
its \emph{radial and non-tangential maximal functions} $\mathcal{M}_{\phi}(f)$
and  $\mathcal{M}^*_{\phi}(f)$ are, respectively, defined by setting, for all $x\in\rn$,
\begin{eqnarray}\label{eqn radial mf}
\mathcal{M}_{\phi}(f)(x):=\dsup_{t\in(0,\,\fz)}\lf|\lf(f*\phi_t\r)(x)\r|
\end{eqnarray}
and
\begin{eqnarray}\label{eqn nontangential mf}
\mathcal{M}_{\phi}^*(f)(x):=\dsup_{|y-x|<t,\,t\in(0,\,\fz)}\lf|\lf(f*\phi_t\r)(y)\r|.
\end{eqnarray}

Liang, Huang and Yang \cite[Theorem 3.7]{L-H-Y12} established the following
non-tangential maximal function characterization of $H_\fai(\rn)$.

\begin{proposition}[\cite{L-H-Y12}]\label{pro nontangential mc}
Let $\fai$ and $\phi\in\mathcal{S}(\rn)$ satisfy, respectively,
Assumption $(\fai)$ and \eqref{eqn integral=1}.
Then $f\in H_\fai(\rn)$ if and only if
$f\in \mathcal{S}'(\rn)$ and
$\mathcal{M}_{\phi}^*(f)\in L^\fai(\rn)$. Moreover, there exists a positive constant
$C$ such that, for all $f\in H_\fai(\rn)$,
\begin{eqnarray*}
\frac{1}{C}\lf\|f\r\|_{H_\fai(\rn)}\le \lf\|\mathcal{M}_{\phi}^*(f)\r\|_{L^\fai(\rn)}\le
C\lf\|f\r\|_{H_\fai(\rn)}.
\end{eqnarray*}
\end{proposition}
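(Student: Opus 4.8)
The plan is to compare, in $L^\fai(\rn)$, the non-tangential maximal function $\mathcal{M}^*_\phi(f)$ attached to the single bump $\phi$ with the non-tangential grand maximal function $f^\ast=f^\ast_{m(\fai)}$ defining $\|f\|_{H_\fai(\rn)}$ (Definition \ref{def MOH space}). One inequality, and the necessity of the stated condition, is immediate: since $\phi\in\cs(\rn)$, there is a constant $c_\phi\in(0,\fz)$, depending only on $\phi$ and $m(\fai)$, with $c_\phi\phi\in\cs_{m(\fai)}(\rn)$; hence $\mathcal{M}^*_\phi(f)(x)\le c_\phi^{-1}f^\ast(x)$ for all $x\in\rn$, and by the monotonicity and positive homogeneity of the Luxembourg--Nakano quasi-norm, $\|\mathcal{M}^*_\phi(f)\|_{L^\fai(\rn)}\le c_\phi^{-1}\|f^\ast\|_{L^\fai(\rn)}=c_\phi^{-1}\|f\|_{H_\fai(\rn)}$. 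In particular $f\in H_\fai(\rn)$ implies $f\in\cs'(\rn)$ and $\mathcal{M}^*_\phi(f)\in L^\fai(\rn)$.

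The substance of the proposition is the converse: assuming $f\in\cs'(\rn)$ and $\mathcal{M}^*_\phi(f)\in L^\fai(\rn)$, show $f^\ast\in L^\fai(\rn)$ with $\|f^\ast\|_{L^\fai(\rn)}\ls\|\mathcal{M}^*_\phi(f)\|_{L^\fai(\rn)}$, which yields $f\in H_\fai(\rn)$ and the left-hand inequality. I would run the Fefferman--Stein scheme adapted to the Musielak--Orlicz setting, as in \cite{L-H-Y12}. Since $i(\fai)\in(0,\fz)$ and $q(\fai)\in[1,\fz)$, fix $r\in(0,1)$ with $r<i(\fai)/q(\fai)$; for such $r$, the sublinear operator $g\mapsto\{M(|g|^r)\}^{1/r}$ ($M$ being the Hardy--Littlewood maximal operator) is bounded on $L^\fai(\rn)$, a known boundedness property of $M$ on Musielak--Orlicz spaces using that $\fai(\cdot,t)\in A_\fz(\rn)$ uniformly in $t$ together with $r<i(\fai)/q(\fai)$. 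Next fix $N\in\nn$ large (to be specified) and introduce the tangential (Peetre-type) maximal function
\begin{eqnarray*}
\mathcal{M}^{**}_{\phi,N}(f)(x):=\sup_{(y,t)\in\rr^{n+1}_+}\frac{|(f*\phi_t)(y)|}{(1+|x-y|/t)^N},\qquad x\in\rn.
\end{eqnarray*}
Two pointwise estimates drive the proof. First, because $\int_\rn\phi\,dx=1$, a Calder\'on-type reproducing identity expresses, for every $\eta\in\cs_{m(\fai)}(\rn)$ and $t\in(0,\fz)$, the function $f*\eta_t$ as a superposition $\int_0^\fz(f*\phi_s)*\Psi^{(t)}_s\,\frac{ds}{s}$ with Schwartz kernels $\Psi^{(t)}_s$ obeying decay and cancellation bounds uniform in $\eta\in\cs_{m(\fai)}(\rn)$; estimating this identity and taking $N$ large enough (depending on $n$ and $m(\fai)$) yields $f^\ast(x)\ls\mathcal{M}^{**}_{\phi,N}(f)(x)$ for all $x\in\rn$. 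Secondly, a standard covering argument --- decomposing $\rn$ into a ball and dyadic annuli, and using the local continuity and controlled growth of $(y,t)\mapsto(f*\phi_t)(y)$ --- gives, provided also $Nr>n$,
\begin{eqnarray*}
\mathcal{M}^{**}_{\phi,N}(f)(x)\ls\lf\{M\lf([\mathcal{M}^*_\phi(f)]^r\r)(x)\r\}^{1/r},\qquad x\in\rn,
\end{eqnarray*}
where the passage from the supremum defining $\mathcal{M}^{**}_{\phi,N}(f)$ to an absorbable remainder is justified by a truncation argument resting on the almost-everywhere finiteness of $\mathcal{M}^{**}_{\phi,N}(f)$, itself a consequence of $f\in\cs'(\rn)$ and the hypothesis $\mathcal{M}^*_\phi(f)\in L^\fai(\rn)$.

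Choosing $N$ so large that both pointwise estimates apply, and combining them with the $L^\fai(\rn)$-boundedness of $g\mapsto\{M(|g|^r)\}^{1/r}$, gives
\begin{eqnarray*}
\|f^\ast\|_{L^\fai(\rn)}&&\ls\lf\|\mathcal{M}^{**}_{\phi,N}(f)\r\|_{L^\fai(\rn)}
\ls\lf\|\lf\{M\lf([\mathcal{M}^*_\phi(f)]^r\r)\r\}^{1/r}\r\|_{L^\fai(\rn)}\\
&&\ls\lf\|\mathcal{M}^*_\phi(f)\r\|_{L^\fai(\rn)},
\end{eqnarray*}
so $f^\ast\in L^\fai(\rn)$, i.e. $f\in H_\fai(\rn)$, with the quantitative bounds asserted. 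The main obstacle is the second pointwise estimate together with the almost-everywhere finiteness of $\mathcal{M}^{**}_{\phi,N}(f)$: the geometric covering estimate and the truncation/bootstrapping must be carried out while keeping the $A_\fz$-weight structure and the variable lower and upper types of $\fai$ under control uniformly in $t$, and the parameters $r$ and $N$ must be tied together so that, simultaneously, the reproducing identity converges, $Nr>n$, and $g\mapsto\{M(|g|^r)\}^{1/r}$ is bounded on $L^\fai(\rn)$ (i.e. $r<i(\fai)/q(\fai)$).
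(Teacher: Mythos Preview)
The paper does not supply its own proof of this proposition; it is quoted verbatim from \cite[Theorem 3.7]{L-H-Y12} and used as a black box. So there is nothing in the present paper to compare your argument against line by line.

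That said, your outline is the standard Fefferman--Stein route and is essentially what \cite{L-H-Y12} carries out: dominate the grand maximal function $f^\ast$ by a Peetre-type tangential maximal function $\mathcal{M}^{**}_{\phi,N}(f)$ built from the single bump $\phi$, then control the latter pointwise by $\{M([\mathcal{M}^*_\phi(f)]^r)\}^{1/r}$ for $Nr>n$, and close using the $L^\fai$-boundedness of $g\mapsto\{M(|g|^r)\}^{1/r}$ when $r<i(\fai)/q(\fai)$. The easy direction via $c_\phi\phi\in\cs_{m(\fai)}(\rn)$ is fine. Two places in your sketch are somewhat loose but not wrong in spirit: the ``Calder\'on-type reproducing identity'' you invoke for $f^\ast\ls\mathcal{M}^{**}_{\phi,N}(f)$ is not quite how this step is usually executed---the cleaner argument writes an arbitrary $\eta\in\cs_{m(\fai)}(\rn)$ as $\eta=\phi*\Theta$ plus a controllable remainder (using $\widehat\phi(0)=1$) and estimates directly, rather than integrating over all scales $s$; and the pointwise bound $\mathcal{M}^{**}_{\phi,N}(f)\ls\{M([\mathcal{M}^*_\phi(f)]^r)\}^{1/r}$ is obtained by averaging the inequality $|(f*\phi_t)(y)|\le\mathcal{M}^*_\phi(f)(z)$ over $z\in B(y,t)$ and comparing balls, with the truncation needed only to ensure the tangential maximal function is finite somewhere. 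With those standard fixes your plan goes through and matches the approach of the cited reference.
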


The following result provides the radial maximal function characterization of
$H_\fai(\rn)$.

\begin{proposition}\label{pro radial mc}
Let $\fai$ and $\phi\in\mathcal{S}(\rn)$ satisfy, respectively,
Assumption $(\fai)$ and \eqref{eqn integral=1}.
Then $f\in H_\fai(\rn)$ if and only if $f\in \mathcal{S}'(\rn)$ and
$\mathcal{M}_{\phi}(f)\in L^\fai(\rn)$. Moreover, there exists a positive constant
$C$ such that, for all $f\in H_\fai(\rn)$,
\begin{eqnarray*}
\frac{1}{C}\lf\|f\r\|_{H_\fai(\rn)}\le \lf\|\mathcal{M}_{\phi}(f)\r\|_{L^\fai(\rn)}\le
C\lf\|f\r\|_{H_\fai(\rn)}.
\end{eqnarray*}
\end{proposition}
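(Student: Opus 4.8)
The plan is to derive the radial maximal function characterization from the already-established non-tangential one (Proposition \ref{pro nontangential mc}). Since $\mathcal{M}_\phi(f)(x)\le \mathcal{M}^*_\phi(f)(x)$ pointwise for every $x\in\rn$, the monotonicity of the Luxembourg-Nakano norm immediately gives $\|\mathcal{M}_\phi(f)\|_{L^\fai(\rn)}\le \|\mathcal{M}^*_\phi(f)\|_{L^\fai(\rn)}\ls \|f\|_{H_\fai(\rn)}$, so the ``only if'' direction and the upper estimate are free. The real content is the reverse: if $f\in\cs'(\rn)$ and $\mathcal{M}_\phi(f)\in L^\fai(\rn)$, then $f\in H_\fai(\rn)$ with $\|f\|_{H_\fai(\rn)}\ls \|\mathcal{M}_\phi(f)\|_{L^\fai(\rn)}$; by Proposition \ref{pro nontangential mc} it suffices to bound $\|\mathcal{M}^*_\phi(f)\|_{L^\fai(\rn)}$ (or even the grand maximal function $f^\ast$) by $\|\mathcal{M}_\phi(f)\|_{L^\fai(\rn)}$.

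First I would recall the classical Fefferman-Stein-type pointwise device, in the form used by Str\"omberg-Torchinsky for weighted spaces: for a suitable $N$ (chosen depending on $q(\fai)$ and $i(\fai)$, e.g. $N>n q(\fai)/i(\fai)+n+1$), one introduces the truncated/tangential auxiliary maximal functions
\begin{eqnarray*}
\mathcal{M}_{\phi,\,N}^{**}(f)(x):=\sup_{(y,\,t)\in\rr^{n+1}_+}\frac{|(f*\phi_t)(y)|}{(1+|x-y|/t)^{N}},
\end{eqnarray*}
and proves the good-$\lambda$ / pointwise domination
\begin{eqnarray*}
\mathcal{M}_{\phi,\,N}^{**}(f)(x)\ls \lf\{\cm\lf([\mathcal{M}_\phi(f)]^{r}\r)(x)\r\}^{1/r}
\end{eqnarray*}
for any $r\in(0,1)$, where $\cm$ is the Hardy-Littlewood maximal operator; this is a purely real-variable estimate that does not see the weight. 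Choosing $r\in(0,i(\fai)/q(\fai))$ ensures $\cm$ is bounded on $L^{\fai(\cdot/\cdot)^{1/r}}$-type spaces (equivalently, the vector-valued/weighted Fefferman-Stein inequality for $\cm$ holds on $L^\fai(\rn)$ after rescaling by $r$), which is a standard property of $L^\fai(\rn)$ under Assumption $(\fai)$ — this is exactly the kind of maximal-function estimate on Musielak-Orlicz spaces recorded in \cite{K11,hyy,L-H-Y12}. Combining, $\|\mathcal{M}_{\phi,\,N}^{**}(f)\|_{L^\fai(\rn)}\ls \|\mathcal{M}_\phi(f)\|_{L^\fai(\rn)}$, and since $\mathcal{M}^*_\phi(f)\le \mathcal{M}_{\phi,\,N}^{**}(f)$ trivially (take $N=0$ there is no decay, but $\mathcal{M}^*_\phi$ uses $|x-y|<t$ so the denominator is comparable to $1$), we get the desired bound and conclude $f\in H_\fai(\rn)$ via Proposition \ref{pro nontangential mc}.

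The main obstacle is the pointwise domination $\mathcal{M}_{\phi,\,N}^{**}(f)\ls \{\cm([\mathcal{M}_\phi f]^r)\}^{1/r}$: its proof requires expanding $f*\phi_t$ in terms of $f*\phi_s$ at nearby scales via a reproducing/convolution identity (writing $\phi_t=\phi_t*\psi_s$-type manipulations, or using that $\{\phi_t\}$ with $\int\phi=1$ generates an approximate identity together with its derivatives), together with a delicate absorption argument to handle the a priori possibly infinite quantity $\mathcal{M}_{\phi,\,N}^{**}(f)$ — one typically first truncates in $t$ (restrict to $t>\epsilon$) and in $|x-y|$, proves the estimate with uniform constants, and then lets the truncation parameters go to $0$ and $\fz$. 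One must also verify at the outset that $\mathcal{M}_\phi(f)\in L^\fai(\rn)$ forces $f$ to have at most polynomial growth so that all the integrals converge and the truncation/absorption is legitimate; this is where the condition $\frac{i(\fai)}{q(\fai)}>\frac{n-1}{n}$ — or more precisely just $i(\fai)/q(\fai)>0$ together with the doubling property \eqref{eqn db property} — enters to control $1/w(B(x,1))$ as in \eqref{eqn difficult}. The remaining steps (monotonicity of $\|\cdot\|_{L^\fai}$, boundedness of $\cm$ on the rescaled Musielak-Orlicz space, invoking Proposition \ref{pro nontangential mc}) are routine.
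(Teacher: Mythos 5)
Your proposal follows essentially the same route as the paper: reduce to the non-tangential characterization (Proposition \ref{pro nontangential mc}) via the classical Fefferman--Stein / Str\"omberg--Torchinsky pointwise-domination scheme for truncated auxiliary maximal functions (the paper cites \cite[(6.4.22)]{Gr09} and \cite[(6.4.27)]{Gr09}), and close the absorption loop using Lemma \ref{lem bd HLMF}, the boundedness of $\mathcal{M}$ on $L^{\fai_p}(\rn)$ for $p$ small. The one place your write-up is slightly misleading is the assertion that the key domination $\mathcal{M}^{**}_{\phi,N}(f)\ls\{\cm([\mathcal{M}_\phi f]^r)\}^{1/r}$ is ``purely real-variable and does not see the weight'': in the paper's version this estimate only holds on the good set $E_{\epsilon,N}$ where the gradient maximal function is dominated by the non-tangential one, and the passage from the good set to all of $\rn$ is done by an absorption at the level of the modular $\int\fai(x,\cdot/\lz)\,dx$, which already uses the Musielak--Orlicz boundedness of $\cm$ (so the weight enters twice, not just at the end). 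Also note that no condition of the form $i(\fai)/q(\fai)>\frac{n-1}{n}$ is needed anywhere in Proposition \ref{pro radial mc}; the paper sidesteps the polynomial-growth issue you raise by working with $\mathcal{M}^*_{\phi,\epsilon,N}$, which is finite by construction thanks to the $(1+\epsilon|y|)^{-N}$ cutoff, and then letting $\epsilon\to0^+$, $N\to\fz$ via monotone convergence.
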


To prove Proposition \ref{pro radial mc}, we need the following boundedness of
the Hardy-Littlewood maximal function on $L^\fai(\rn)$ from
\cite[Corollary 2.8]{L-H-Y12}. Recall that, for all $x\in\rn$, the
\emph{Hardy-Littlewood maximal function} $\mathcal{M}(f)$ of a locally integrable function
$f$ on $\rn$ is defined by setting,
\begin{eqnarray}\label{HL maxiamal function}
\mathcal{M}(f)(x):=\sup_{B\ni x}\frac{1}{|B|}\dint_{B}\lf|f(y)\r|\,dy,
\end{eqnarray}
where the supremum is taken over all balls $B$ in $\rn$ containing $x$.

\begin{lemma}[\cite{L-H-Y12}]\label{lem bd HLMF}
Let $\fai$ satisfy Assumption $(\fai)$ with
the lower type exponent $p\in(1,\,\fz)$ and
$q(\fai)<i(\fai)$, where $q(\fai)$ and $i(\fai)$ are as in
\eqref{1.7} and \eqref{1.6}, respectively.
Then $\mathcal{M}$ is bounded on $L^\fai(\rn)$. Moreover, there exists a positive
constant $C$ such that, for all $f\in L^\fai(\rn)$,
\begin{eqnarray*}
\dint_{\rn}\fai\lf(x,\,\mathcal{M}(f)(x)\r)\,dx\le C \dint_{\rn}\fai\lf(x,\,|f(x)|\r)\,dx.
\end{eqnarray*}
\end{lemma}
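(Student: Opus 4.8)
The plan is to prove the stronger \emph{modular} inequality
$\dint_\rn\fai(x,\mathcal{M}(f)(x))\,dx\ls\dint_\rn\fai(x,|f(x)|)\,dx$; the boundedness of $\mathcal{M}$ on $L^\fai(\rn)$ then follows from this by a routine scaling argument using the definition \eqref{1.4} of the Luxembourg(-Nakano) norm and the (finite) lower type of $\fai(x,\cdot)$. We may assume $\dint_\rn\fai(x,|f(x)|)\,dx<\fz$ and, by Remark \ref{rem defMOH}(ii), that $\fai(x,\cdot)$ is strictly increasing and continuous for every $x\in\rn$. Since $q(\fai)<i(\fai)$, first I would fix $r\in(q(\fai),\,i(\fai))$ and then $r'\in(r,\,i(\fai))$. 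By \eqref{1.7} and $r>q(\fai)$, $\fai(\cdot,t)\in A_r(\rn)$ for all $t\in(0,\fz)$ with $K_0:=\sup_{t\in(0,\fz)}[\fai(\cdot,t)]_{A_r(\rn)}<\fz$; by \eqref{1.6}, $r'<i(\fai)$ and the fact that lower type $q$ implies lower type $q''$ for all $q''\le q$, the function $\fai(x,\cdot)$ is of lower type $r'$ uniformly in $x$, equivalently $t\mapsto\fai(x,t)t^{-r'}$ is almost increasing uniformly in $x$; finally $\fai(x,\cdot)$ has, uniformly in $x$, some finite upper type $q_1\in[r',\fz)$.

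Two standard facts enter. First, since $\fai(x,\cdot)$ is increasing of upper type $q_1$, summing $\fai(x,u(x))\le\fai(x,c^{k+1})\ls c^{q_1}\fai(x,c^k)$ over the level sets $\{c^k<u\le c^{k+1}\}$ shows that, for any nonnegative $u$ finite a.e. and any $c\in(1,\fz)$,
$$\dint_\rn\fai(x,u(x))\,dx\ls\dsum_{k\in\zz}\dint_{\{x\in\rn:\ u(x)>c^k\}}\fai(x,c^k)\,dx.$$
Second, for every $w\in A_r(\rn)$, $\mathcal{M}$ is bounded on $L^r(w)$ with norm depending only on $n$, $r$ and $[w]_{A_r(\rn)}$ (see, for example, \cite{GR85}); in particular this bound is uniform over $\{\fai(\cdot,t):\ t\in(0,\fz)\}$.

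For the core step, fix $k\in\zz$ and split $f=g_k+b_k$ with $g_k:=f\chi_{\{|f|>c^k/2\}}$, so $\|b_k\|_{L^\fz(\rn)}\le c^k/2$. If $\mathcal{M}(f)(x)>c^k$, choosing a ball $B\ni x$ with $\frac1{|B|}\int_B|f|>c^k$ gives $\frac1{|B|}\int_B|g_k|>c^k-\frac1{|B|}\int_B|b_k|\ge c^k/2$, so $\mathcal{M}(g_k)(x)>c^k/2$ and hence $\{\mathcal{M}(f)>c^k\}\subset\{\mathcal{M}(g_k)>c^k/2\}$. Therefore, using $\fai(x,c^k)\ls\fai(x,c^k/2)$ (upper type), Chebyshev's inequality in $L^r(\fai(\cdot,c^k/2))$, the weighted $L^r$-bound, and finally the almost-monotonicity of $\fai(x,t)t^{-r'}$ on $\{|f|>c^k/2\}$,
$$\dint_{\{\mathcal{M}(f)>c^k\}}\fai(x,c^k)\,dx\ls\frac1{(c^k/2)^r}\dint_{\{|f|>c^k/2\}}|f(x)|^r\fai(x,c^k/2)\,dx\ls\dint_{\{|f|>c^k/2\}}\lf(\frac{c^k/2}{|f(x)|}\r)^{r'-r}\fai(x,|f(x)|)\,dx.$$
Summing over $k\in\zz$ and applying the first fact with $u=\mathcal{M}(f)$,
$$\dint_\rn\fai(x,\mathcal{M}(f)(x))\,dx\ls\dint_\rn\fai(x,|f(x)|)\lf[\dsum_{k\in\zz,\ c^k<2|f(x)|}\lf(\frac{c^k/2}{|f(x)|}\r)^{r'-r}\r]\,dx,$$
and the inner sum is a geometric series of ratio $c^{r'-r}<1$ bounded by $(1-c^{-(r'-r)})^{-1}$, a constant independent of $x$; this yields the modular inequality.

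The main obstacle, and the place where the hypothesis $q(\fai)<i(\fai)$ is genuinely needed, is precisely the convergence of this last sum over $k$: one cannot apply the weighted $L^r$-bound to the untruncated $f$ at every scale, since the resulting sum over $k$ diverges, so $f$ must first be truncated at height $\sim c^k$, and the strict gap $r<r'$ then supplies the positive exponent $r'-r$ that makes the geometric series in $k$ summable, uniformly in $x$. The remaining points are routine: the opening reductions; the measurability of the level sets (handled by the continuity of $\fai(x,\cdot)$); the finiteness, when $\dint_\rn\fai(x,|f(x)|)\,dx<\fz$, of each quantity $\dint_{\{|f|>c^k/2\}}|f(x)|^r\fai(x,c^k/2)\,dx$ to which Chebyshev's inequality is applied (again from the almost-monotonicity and $r<r'$); and the passage from the modular inequality to the norm estimate.
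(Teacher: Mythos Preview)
The paper does not supply its own proof of this lemma; it is quoted as \cite[Corollary~2.8]{L-H-Y12} and used as a black box. Your argument is a correct, self-contained proof of the modular inequality by a Calder\'on--Zygmund level-set/splitting scheme: you fix exponents $q(\fai)<r<r'<i(\fai)$, truncate $f$ at height $\sim c^k$, apply the weighted $L^r$-bound for $\mathcal M$ with weight $\fai(\cdot,c^k/2)\in A_r(\rn)$ (with $A_r$-constant uniform in $k$), and then use the uniform lower type $r'$ to produce the factor $(c^k/(2|f|))^{r'-r}$ that makes the sum over $k$ a convergent geometric series. The passage from the modular inequality to the Luxembourg-norm bound via lower type is also correct.

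Two minor points, neither of which affects the validity of the proof. First, you write that the inner geometric series has ``ratio $c^{r'-r}<1$''; since $c>1$ and $r'>r$ this ratio is $>1$, but you clearly mean the \emph{decreasing} direction, and the bound $(1-c^{-(r'-r)})^{-1}$ you state is the right one. Second, you invoke a finite upper type $q_1$ to compare $\fai(x,c^{k+1})$ with $\fai(x,c^k)$ and $\fai(x,c^k)$ with $\fai(x,c^k/2)$. The lemma as stated assumes ``Assumption~$(\fai)$'', which literally imposes upper type $1$; in the paper the lemma is applied to $\fai_q(x,t):=\fai(x,t^{1/q})$, which has upper type $1/q<\fz$, so a finite upper type is always available in practice. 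It would be cleaner to say explicitly which upper type you are using rather than introducing an unspecified $q_1$.
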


We now turn to the proof of Proposition \ref{pro radial mc}.

\begin{proof}[Proof of Proposition \ref{pro radial mc}]
The direction that $f\in H_\fai(\rn)$ implies $\mathcal{M}_{\phi}(f)\in L^\fai(\rn)$
is an easy consequence of Proposition \ref{pro nontangential mc} and the fact that,
for all $x\in\rn$, $\mathcal{M}_{\phi}(f)(x)\le\mathcal{M}_{\phi}^*(f)(x)$,
the details being omitted.

Now, let $f\in \mathcal{S}'(\rn)$ satisfy $\mathcal{M}_{\phi}(f)\in L^\fai(\rn)$.
Based on Proposition \ref{pro nontangential mc},
we prove another direction of Proposition \ref{pro radial mc} by showing that
\begin{eqnarray}\label{eqn comparison of RM NM}
\lf\|\mathcal{M}_{\phi}^*(f)\r\|_{L^\fai(\rn)}\ls
\lf\|\mathcal{M}_{\phi}(f)\r\|_{L^\fai(\rn)}.
\end{eqnarray}
Indeed, for any $\epsilon\in(0,\,1)$, $N\in\nn$ sufficiently large and $x\in\rn$, let
\begin{eqnarray*}
\mathcal{M}_{\phi,\,\epsilon,\,N}^*(f)(x):=\dsup_{|x-y|<t<\frac1\epsilon}
\lf|\lf(f*\phi_t\r)(y)\r|\lf(\frac{t}{t+\epsilon}\r)^N\lf(1+\epsilon|y|\r)^{-N}.
\end{eqnarray*}
It is easy to see that, for all $x\in\rn$, $\lim_{\epsilon\to 0^+,\,N\to \fz} \mathcal{M}_{\phi,\,\epsilon,\,N}^*(f)(x)=\mathcal{M}_{\phi}^*(f)(x)$.

We first claim that, for all $\lz\in(0,\,\fz)$, there exists a positive constant
$C_{(N,\,n,\,\fai,\,\phi)}$, depending only on $N$, $n$, $\fai$ and $\phi$, such that
\begin{eqnarray}\label{eqn est1 pro RMC}
\dint_{\rn}\fai\lf(x,\,\frac{\mathcal{M}_{\phi,\,\epsilon,\,N}^*(f)(x)}{\lz}\r)\,dx\le C_{(N,\,n,\,\fai,\,\phi)}
\dint_{\rn}\fai\lf(x,\,\frac{\mathcal{M}_\phi(f)(x)}{\lz}\r)\,dx.
\end{eqnarray}

To prove this claim, for all $x\in\rn$, let
\begin{eqnarray*}
\wz{\mathcal{M}}_{\phi,\,\epsilon,\,N}^*(f)(x):=\dsup_{|x-y|<t<\frac1\epsilon}
t\lf|\nabla_y\lf(f*\phi_t\r)(y)\r|\lf(\frac{t}{t+\epsilon}\r)^N\lf(1+\epsilon|y|\r)^{-N}.
\end{eqnarray*}
From the proof of \cite[(6.4.22)]{Gr09},
we deduce that, for any $p\in(0,\,\fz)$, $\epsilon\in(0,\,1)$ and $N\in\nn$,
there exists a positive constant
$C_{(N,\,n,\,\fai,\,\phi)}$ such that, for all $x\in\rn$,
\begin{eqnarray}\label{eqn est2 pro RMC}
\wz{\mathcal{M}}_{\phi,\,\epsilon,\,N}^*(f)(x)\le C_{(N,\,n,\,\fai,\,\phi)}
\lf\{\mathcal{M} \lf(\lf[\mathcal{M}_{\phi,\, \epsilon,\,N}^*(f)\r]^p\r)(x)\r\}^{1/p},
\end{eqnarray}
where $\mathcal{M}$ denotes the Hardy-Littlewood maximal function as in
\eqref{HL maxiamal function}.

Now, let
\begin{eqnarray*}
E_{\epsilon,\,N}:=\lf\{x\in\rn:\ \wz{\mathcal{M}}_{\phi,\,\epsilon,\,N}^*(f)(x)\le
C_0 \mathcal{M}_{\phi,\,\epsilon,\,N}^*(f)(x)\r\},
\end{eqnarray*}
where $C_0$ is a sufficiently large constant whose size will be determined later.
For all $(x,\,t)\in\rr^{n+1}_+$, let $\fai_p(x,\,t):=\fai(x,\,t^{1/p})$.
By the definition of $i(\fai)$, we know that there exists $p_0\in(0,\,i(\fai))$ such that,
for any $x\in\rn$, $\fai(x,\,\cdot)$ is of lower type ${p}_0$.
It is easy to see that
$i(\fai_p)=\frac{i(\fai)}{p}$ and, for any $x\in\rn$, $\fai_p(x,\cdot)$ is of lower type $\frac{p_0}p$.
Thus, by taking $p$ sufficiently small, we obtain $q(\fai_p)<i(\fai_p)$, which, together with \eqref{eqn est2 pro RMC}, Lemma \ref{lem bd HLMF} and
the lower type $p_0$ property of $\fai(x,\cdot)$,
implies that there exists a positive constant $C_{(\fai)}$ satisfying that,
for any $\lz\in(0,\,\fz)$,
\begin{eqnarray}\label{eqn est4 pro RMC}
&&\int_{(E_{\epsilon,\,N})^\complement}\fai
\lf(x,\,\frac{\mathcal{M}_{\phi,\,\epsilon,\,N}^*(f)(x)}{\lz}\r)\,dx\\
&&\nonumber\hs\le C_{(\fai)}\lf(\frac{1}{C_0}\r)^{{p}_0} \int_{(E_{\epsilon,\,N})^\complement}\fai
\lf(x,\,\frac{\wz{\mathcal{M}}_{\phi,\,\epsilon,\,N}^*(f)(x)}{\lz}\r)\,dx\\
&&\nonumber\hs\le C_{(N,\,n,\,\fai,\,\phi)} \lf(\frac{1}{C_0}\r)^{{p}_0} \int_{(E_{\epsilon,\,N})^\complement}\fai_p
\lf(x,\,\frac{\mathcal{M}([\mathcal{M}_{\phi,\, \epsilon,\,N }^*(f)]^p)(x)}
{\lz^p}\r)\,dx\\&&\nonumber\hs\le C_{(N,\,n,\,\fai,\,\phi)} \lf(\frac{1}{C_0}\r)^{{p}_0} \int_{\rn}\fai
\lf(x,\,\frac{\mathcal{M}_{\phi,\, \epsilon,\,N }^*(f)(x)}
{\lz}\r)\,dx.
\end{eqnarray}
By taking $C_0$ in \eqref{eqn est4 pro RMC} sufficiently large so that $C_{(N,\,n,\,\fai,\,\phi)} (\frac{1}{C_0})^{{p}_0}<\frac{1}{2}$,
we see that
\begin{eqnarray}\label{eqn est3 pro RMC}
\dint_{\rn}\fai
\lf(x,\,\frac{\mathcal{M}_{\phi,\,\epsilon,\,N}^*(f)(x)}{\lz}\r)\,dx\le 2
\dint_{E_{\epsilon,\,N}}\fai
\lf(x,\,\frac{\mathcal{M}_{\phi,\,\epsilon,\,N}^*(f)(x)}{\lz}\r)\,dx.
\end{eqnarray}
Moreover, from \cite[(6.4.27)]{Gr09},
it follows that, for all $r<i(\fai)$
and $x\in E_{\epsilon,\,N}$,
\begin{eqnarray*}
\mathcal{M}_{\phi,\,\epsilon,\,N}^*(f)(x)\le C_{(N,\,n,\,\fai,\,\phi)}
\lf\{\mathcal{M}\lf(\lf[\mathcal{M}_{\phi}(f)\r]^r\r)(x)\r\}^{1/r},
\end{eqnarray*}
which, together with \eqref{eqn est3 pro RMC} and an argument similar
to that used in the estimate \eqref{eqn est4 pro RMC}, implies that
\eqref{eqn est1 pro RMC} holds true.

Now, we finish the proof of Proposition \ref{pro radial mc} by using the above claim.
Observe that, for $x\in\rn$,
\begin{eqnarray*}
\mathcal{M}_{\phi,\,\epsilon,\,N}^*(f)(x)\ge \frac{2^{-N}}{(1+\epsilon|x|)^N}\dsup_{|x-y|<t<\frac1\epsilon}
\lf|\lf(f*\phi_t\r)(y)\r|\lf(\frac{t}{t+\epsilon}\r)^N=:F_{\epsilon,\,N}(x).
\end{eqnarray*}
It is easy to see, for each $N$ and $x$, $F_{\epsilon,\,N}(x)$ is increasing to
$2^{-N}\mathcal{M}_{\phi}^*(f)(x)$ as $\epsilon\to 0^+$, which, combined with
\eqref{eqn est1 pro RMC} and Lebesgue's monotone convergence theorem,
implies that
\begin{eqnarray*}
\dint_{\rn}\fai\lf(x,\,\frac{\mathcal{M}_{\phi}^*(f)(x)}{\lz}\r)\,dx\le
C_{(N,\,n,\,\fai,\,\phi)} \dint_{\rn}\fai
\lf(x,\,\frac{\mathcal{M}_\phi(f)(x)}{\lz}\r)\,dx.
\end{eqnarray*}

In particular, $\mathcal{M}_\phi(f)\in L^\fai(\rn)$ implies that
$\mathcal{M}_{\phi}^*(f)\in L^\fai(\rn)$. This, together with a repetition
of the above argument used in the  proof of  the estimate \eqref{eqn est1 pro RMC} with
$\epsilon:=0$ and $N:=\fz$ in $\mathcal{M}_{\phi,\,\epsilon,\,N}^*(f)$ and $\wz{\mathcal{M}}_{\phi,\,\epsilon,\,N}^*(f)$, implies that
\begin{eqnarray*}
\dint_{\rn}\fai\lf(x,\,\frac{\mathcal{M}_{\phi}^*(f)(x)}{\lz}\r)\,dx\le C_{(n,\,\fai,\,\phi)}\dint_{\rn}\fai\lf(x,\,\frac{\mathcal{M}_\phi(f)(x)}{\lz}\r)\,dx.
\end{eqnarray*}
This finishes the proof of Proposition \ref{pro radial mc}.
\end{proof}

We also need the following Poisson integral characterization of $H_\fai(\rn)$.
Recall that a distribution $f\in \mathcal{S}'(\rn)$ is called a \emph{bounded distribution},
if, for any $\phi\in \mathcal{S}(\rn)$, $f*\phi\in L^\fz(\rn)$. For all $(x,\,t)\in\rr^{n+1}_+$, let
\begin{eqnarray}\label{eqn PK}
P_t(x):=C_{(n)}\frac{t}{(t^2+|x|^2)^{(n+1)/2}}
\end{eqnarray}
be the \emph{Poisson kernel}, where $C_{(n)}$ is the same as in
\eqref{eqn RZ}. It is well known that, if $f$ is a
bounded distribution, then $f*P_t$ is a well-defined, bounded and smooth function.
Moreover, $f*P_t$ is harmonic on $\rr^{n+1}_+$ (see \cite[p.\,90]{St91}).

Recall that, in \cite[p.\,91, Theorem 1]{St91}, Stein established the
Poisson integral characterization of the classical Hardy space
$H^p(\rn)$ by using some pointwise estimates.
These estimates can directly be used in our setting to obtain
the following proposition, the details being omitted.

\begin{proposition}\label{pro PIC}
Let $\fai$ satisfy Assumption $(\fai)$
and $f\in \mathcal{S}'(\rn)$ be a bounded distribution.
Then $f\in H_\fai(\rn)$ if and only if
$f_{P}^*\in L^\fai(\rn)$, where, for all $x\in\rn$,
\begin{eqnarray*}
f_{P}^*(x):=\dsup_{|y-x|<t,\,t\in(0,\,\fz)}\lf|(f*P_t)(y)\r|.
\end{eqnarray*}
Moreover,  there exists a positive constant $C$ such that, for all $f\in H_\fai(\rn)$,
\begin{eqnarray*}
\frac{1}{C}\lf\|f\r\|_{H_\fai(\rn)}\le \lf\|f_{P}^*\r\|_{L^\fai(\rn)}\le C
\lf\|f\r\|_{H_\fai(\rn)}.
\end{eqnarray*}
\end{proposition}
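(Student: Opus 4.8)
The plan is to deduce Proposition \ref{pro PIC} from the non-tangential and radial maximal function characterizations already established (Propositions \ref{pro nontangential mc} and \ref{pro radial mc}) by comparing the Poisson maximal function $f_P^*$ with the grand maximal function $f^\ast$, exactly as Stein does in \cite[p.\,91]{St91} for $H^p(\rn)$. The point is that the pointwise estimates in Stein's proof are of a ``local'' nature --- they compare $|(f*P_t)(y)|$ with sup's of $|(f*\phi_s)(z)|$ over $\phi\in\cs_m(\rn)$ --- and hence carry over verbatim once one replaces the $L^p$-quasi-norm by the Luxembourg--Nakano quasi-norm in $L^\fai(\rn)$ and invokes the boundedness of the Hardy--Littlewood maximal operator on $L^\fai(\rn)$ (Lemma \ref{lem bd HLMF}) wherever Stein invokes the $L^{p/r}$-boundedness of $\mathcal{M}$.

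First I would record that if $f$ is a bounded distribution then $f*P_t$ is well defined, bounded, smooth and harmonic on $\rr^{n+1}_+$, so $f_P^*$ makes sense pointwise; this is the passage cited from \cite[p.\,90]{St91}. For the ``if'' direction, suppose $f_P^*\in L^\fai(\rn)$. Following Stein, one truncates and regularizes the Poisson kernel, approximating a generic test function $\phi\in\cs_{m(\fai)}(\rn)$ in terms of dilates of $P_t$ and its $t$-derivatives; the subordination identity $P_t = \int_0^\fz (4\pi u)^{-n/2}e^{-|x|^2/(4u)}\, \mu_t(du)$ plus the decay estimates for $\del^\bz P_t$ give, for $r\in(0,i(\fai))$ small and all $x\in\rn$,
\begin{eqnarray*}
f^\ast(x)\le C\lf\{\mathcal{M}\lf(\lf[f_P^*\r]^r\r)(x)\r\}^{1/r}.
\end{eqnarray*}
Then choose $r$ so small that $q(\fai_r)<i(\fai_r)$ where $\fai_r(x,t):=\fai(x,t^{1/r})$ (possible by the definition of $i(\fai)$, just as in the proof of Proposition \ref{pro radial mc}), apply Lemma \ref{lem bd HLMF} to $\fai_r$, and use the lower type property of $\fai$ to conclude $\|f^\ast\|_{L^\fai(\rn)}\ls\|f_P^*\|_{L^\fai(\rn)}$, i.e. $f\in H_\fai(\rn)$. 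For the ``only if'' direction, if $f\in H_\fai(\rn)$ then in particular $f$ is a bounded distribution (this is part of the real-variable theory of $H_\fai(\rn)$ from \cite{K11}), and since $P_t$ is, up to the harmless issue that $P\notin\cs(\rn)$, comparable to an element of $\cs_{m(\fai)}(\rn)$ after the same truncation-and-subordination device, one gets $f_P^*(x)\le C f^\ast(x)$ pointwise and hence $\|f_P^*\|_{L^\fai(\rn)}\ls\|f\|_{H_\fai(\rn)}$; combining the two bounds yields the claimed norm equivalence.

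The main obstacle --- and the only place real care is needed --- is that $P\notin\cs(\rn)$, so one cannot literally feed $P_t$ into the grand maximal function; one must replace $P$ by a Schwartz approximation and control the error by a convergent sum of dilated maximal functions, precisely the tail estimate that occupies the bulk of Stein's argument. Since that estimate is purely pointwise and does not see the underlying function space, it transfers without change, and the only ``new'' input is swapping $\|\mathcal{M}(\cdot)\|_{L^{p/r}}\ls\|\cdot\|_{L^{p/r}}$ for the corresponding statement over $L^{\fai_r}(\rn)$ from Lemma \ref{lem bd HLMF}. For this reason the proposition is stated without proof in the excerpt: the pointwise estimates of \cite[p.\,91, Theorem 1]{St91} ``can directly be used in our setting,'' and the above is exactly how that reduction goes.
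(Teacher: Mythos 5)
Your overall strategy is exactly what the paper does: Proposition \ref{pro PIC} is stated in the paper with no written proof, only the remark that the pointwise estimates from \cite[p.\,91, Theorem 1]{St91} transfer verbatim once one replaces the $L^p$-boundedness of $\mathcal{M}$ by Lemma \ref{lem bd HLMF} for $\fai_r(x,t):=\fai(x,t^{1/r})$ with $r$ small enough that $q(\fai_r)<i(\fai_r)$. Your description of that reduction, including the decomposition/truncation of the Poisson kernel (which in the paper appears as the decomposition $P_t=(\psi_1)_t*h_t+(\psi_2)_t$ of \eqref{eqn decomposition PK} rather than via subordination, but these serve the same purpose) and the pointwise bound $f^\ast\ls\{\mathcal{M}([f_P^*]^r)\}^{1/r}$, is correct and captures the content the paper leaves implicit.

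There is, however, one assertion in your write-up that is genuinely wrong and contradicts a point the paper takes pains to emphasize: you claim in the ``only if'' direction that ``if $f\in H_\fai(\rn)$ then in particular $f$ is a bounded distribution (this is part of the real-variable theory of $H_\fai(\rn)$ from \cite{K11}).'' This is precisely what \emph{cannot} be proved in the Musielak--Orlicz setting. The paper's Remark \ref{rem PIC} says explicitly that ``for an arbitrary $f\in H_\fai(\rn)$, we cannot show that $f$ is a bounded distribution without any additional assumptions on $f$ or $\fai$,'' and \eqref{eqn difficult} in the introduction quantifies why: the bound on $|f*\phi_t(x)|$ one extracts from $\|f\|_{H_\fai}$ depends on $\big(\int_{B(x,1)}\fai(y,1)\,dy\big)^{-1}$, which need not be uniformly bounded in $x$. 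This is exactly the reason the proposition's hypothesis assumes $f$ is a bounded distribution (a departure from Stein's $H^p$ statement) and the reason the paper restricts to $H_\fai(\rn)\cap L^2(\rn)$ and completes with respect to Riesz-type norms elsewhere. In your proof the false claim is harmless only because the hypothesis already grants that $f$ is a bounded distribution, so you never actually need to derive it --- but you should delete the parenthetical, since as written it asserts something the rest of the paper is organized around disproving.
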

\begin{remark}\label{rem PIC}
We point out that the statement of Proposition \ref{pro PIC}
is a little bit different from that of \cite[p.\,91, Theorem 1]{St91} in that
here we  assume, a priori, that $f\in \mathcal{S}'(\rn)$ is a bounded distribution.
This is because that, for an arbitrary $f\in H_\fai(\rn)$, we cannot show that $f$
is a bounded distribution without  any additional assumptions on $f$ or $\fai$.
However,  by the facts that the set
$H_\fai(\rn)\cap L^2(\rn)$ is dense in $H_\fai(\rn)$ and
there exist $\psi_1$, $\psi_2\in \mathcal{S}(\rn)$ and $h\in L^1(\rn)$ such that,
for all $t\in(0,\,\fz)$,
\begin{eqnarray}\label{eqn decomposition PK}
P_t=(\psi_1)_t*h_t+(\psi_2)_t
\end{eqnarray}
(see \cite[p.\,90]{St91}), we know that, for every $f\in H_\fai(\rn)$,
we can define $f*P_t$ by setting, for all $x\in\rn$ and $t\in(0,\,\fz)$,
$(f*P_t)(x):=\lim_{k\to \fz} (f_k*P_t)(x),$
where $\{f_k\}_{k\in\nn}\subset (H_\fai(\rn)\cap L^2(\rn))$
satisfies $\lim_{k\to \fz} f_k=f$ in $H_\fai(\rn)$ and hence in $\mathcal{S}'(\rn)$.
\end{remark}

\subsection{Musielak-Orlicz-Hardy spaces $H_\fai(\rr^{n+1}_+)$ of harmonic functions\label{s22}}
\hskip\parindent
In this subsection, we introduce the Musielak-Orlicz-Hardy space
$H_\fai(\rr^{n+1}_+)$ of harmonic functions and establish its
relation with $H_\fai(\rn)$.

To this end, let $u$ be a function on $\rr^{n+1}_+$.
Its \emph{non-tangential maximal function} $u^*$ is defined by setting,
for all $x\in\rn$,
\begin{eqnarray*}
u^*(x):=\dsup_{|y-x|<t,\,t\in(0,\,\fz)}|u(y,\,t)|.
\end{eqnarray*}

Recall that a function $u$ on $\rr^{n+1}_+$ is said to be \emph{harmonic} if
$(\bdz_x+\pat^2_t)u(x,\,t)=0$ for all $(x,\,t)\in \rr^{n+1}_+$.

\begin{definition}\label{def MOHH space}
Let $\fai$ satisfy Assumption $(\fai)$.
The \emph{Musielak-Orlicz-Hardy space of harmonic functions}, $H_\fai(\rr^{n+1}_+)$,
is defined to be the space
of all harmonic functions $u$ on $\rr^{n+1}_+$ such that $u^*\in L^\fai(\rn)$.
Moreover, for all $u\in H_\fai(\rr^{n+1}_+)$, its \emph{quasi-norm}
is defined by $\|u\|_{H_\fai(\rr^{n+1}_+)}:= \|u^*\|_{L^\fai(\rn)}$.
\end{definition}

Recall also the following notion of the Hardy space $H^p(\rr^{n+1}_+)$ of harmonic
functions with $p\in(1,\,\fz)$  from \cite{SW71} (see also \cite{ABR01}).

\begin{definition}[\cite{SW71}]\label{def MOHH space p>1}
Let $p\in(1,\,\fz)$.
The \emph{Hardy space $H^p(\rr^{n+1}_+)$ of harmonic functions}
is defined to be the space of all harmonic functions $u$ on $\rr^{n+1}_+$
such that, for all $t\in(0,\,\fz)$, $u(\cdot,\,t)\in L^p(\rn)$.
Moreover, for all $u\in H^p(\rr^{n+1}_+)$, its \emph{norm}
is defined by
$$\|u\|_{H^p(\rr^{n+1}_+)}:= \dsup_{t\in(0,\,\fz)}\|u(\cdot,\,t)\|_{L^p(\rn)}.$$
For $\fai$ as in Definition \ref{def MOHH space}, let
\begin{eqnarray*}
H_{\fai,\,2}(\rr^{n+1}_+):=\overline{H_\fai(\rr^{n+1}_+)\cap H^2(\rr^{n+1}_+)}^{\|\cdot\|_{H_\fai(\rr^{n+1}_+)}}
\end{eqnarray*}
be the \emph{completion of the set} $H_\fai(\rr^{n+1}_+)\cap H^2(\rr^{n+1}_+)$
\emph{under the quasi-norm} $\|\cdot\|_{H_\fai(\rr^{n+1}_+)}$.
\end{definition}

\begin{remark}\label{rem MOHHFS P>1}
For any $u\in H_{\fai}(\rr^{n+1}_+)\cap
H^2(\rr^{n+1}_+)$, from the Poisson integral
characterization of $H^2(\rr^{n+1}_+)$
(see, for example, \cite[Theorem 7.17]{ABR01}),
we deduce that $u$ satisfies the
following \emph{semigroup formula} that,
for all $x\in\rn$ and $s,\,t\in(0,\,\fz)$,
\begin{eqnarray}\label{eqn SGF}
u(x,\,s+t)=(u(\cdot,\,s)*P_t)(x),
\end{eqnarray}
where $P_t$ denotes the Poisson kernel as in \eqref{eqn PK}.
This formula was first introduced by Bui in \cite{B90}. Moreover,
let $p\in(0,\,1]$, $w\in A_\fz(\rn)$ and  $H_w^p(\rr^{n+1}_+)$ be
the weighted Hardy space of harmonic functions defined as in Definition
\ref{def MOHH space} via the radial maximal functions.
Let $\overline H_w^p(\rr^{n+1}_+)$ be the \emph{closure} in $H_w^p(\rr^{n+1}_+)$
of the subspace of those functions in $H_w^p(\rr^{n+1}_+)$ for which the
semigroup formula \eqref{eqn SGF} holds. Bui proposed the question
that, under what condition, $\overline H_w^p(\rr^{n+1}_+)$
is equivalent to $H_w^p(\rr^{n+1}_+)$. It is known that if, for some $d\in(0,\,\fz)$,
$w$ satisfies the following extra \emph{condition}
that, for all $x\in\rn$ and
$\rho\in(0,\,1)$,
\begin{eqnarray*}
\dint_{B(x,\,\rho)}w(y)\,dy\gs \rho^d,
\end{eqnarray*}
then $\overline H_w^p(\rr^{n+1}_+)=H_w^p(\rr^{n+1}_+)$ (see \cite[Remark 3.3]{B90}).
In particular, if $w\equiv1$,  the above two spaces coincide.
We refer the reader to \cite[Remark 3.3]{B90} for more details.
\end{remark}

The following proposition shows that the spaces,
$H_{\fai,\,2}(\rr^{n+1}_+)$
and $H_\fai(\rn)$, are isomorphic to each other via the Poisson integral.

\begin{proposition}\label{pro HFC}
Let $\fai$ satisfy Assumption $(\fai)$ and $u$ be
a harmonic function on $\rr^{n+1}_+$. Then $u\in H_{\fai,\,2}(\rr^{n+1}_+)$ if and only
if there exists $f\in H_\fai(\rn)$ such that, for all $(x,\,t)\in\rr^{n+1}_+$,
$u(x,\,t)=\lf(f*P_t\r)(x),$
where $(f*P_t)(x)$ is defined as in Remark \ref{rem PIC}.
Moreover, there exists a positive constant $C$, independent of $f$ and $u$,
such that
\begin{eqnarray*}
\frac{1}{C}\|f\|_{H_\fai(\rn)}\le \|u\|_{H_\fai(\rr^{n+1}_+)}\le C \|f\|_{H_\fai(\rn)}.
\end{eqnarray*}
\end{proposition}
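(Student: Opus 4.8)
The plan is to prove the two directions separately, using the already established radial/non-tangential and Poisson integral characterizations of $H_\fai(\rn)$ (Propositions~\ref{pro radial mc} and~\ref{pro PIC}) together with the density of $H_\fai(\rn)\cap L^2(\rn)$ in $H_\fai(\rn)$ (Remark~\ref{rem defMOH}(iii)) and the classical theory of $H^2(\rr^{n+1}_+)$.

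\emph{From $H_\fai(\rn)$ to $H_{\fai,\,2}(\rr^{n+1}_+)$.} Given $f\in H_\fai(\rn)$, first I would treat the case $f\in H_\fai(\rn)\cap L^2(\rn)$. For such $f$, set $u(x,\,t):=(f*P_t)(x)$; then $u$ is harmonic on $\rr^{n+1}_+$, and since $f\in L^2(\rn)$ the standard $L^2$ Poisson theory (see, e.g., \cite[Theorem 7.17]{ABR01}) gives $u\in H^2(\rr^{n+1}_+)$. By the Poisson integral characterization, Proposition~\ref{pro PIC} (valid since $f\in L^2(\rn)$ is a bounded distribution), $u^*=f_P^*\in L^\fai(\rn)$ with $\|u\|_{H_\fai(\rr^{n+1}_+)}=\|f_P^*\|_{L^\fai(\rn)}\sim\|f\|_{H_\fai(\rn)}$, so $u\in H_\fai(\rr^{n+1}_+)\cap H^2(\rr^{n+1}_+)$ and in particular $u\in H_{\fai,\,2}(\rr^{n+1}_+)$ with comparable norms. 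For general $f\in H_\fai(\rn)$, choose $\{f_k\}_{k\in\nn}\subset H_\fai(\rn)\cap L^2(\rn)$ with $f_k\to f$ in $H_\fai(\rn)$; then $u_k:=f_k*P_t$ is Cauchy in $H_\fai(\rr^{n+1}_+)$ by the norm equivalence just proved, hence converges to some $u\in H_{\fai,\,2}(\rr^{n+1}_+)$, and by Remark~\ref{rem PIC} the limit is precisely $u(x,\,t)=(f*P_t)(x)$ as defined there; the two-sided norm bound passes to the limit.

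\emph{From $H_{\fai,\,2}(\rr^{n+1}_+)$ to $H_\fai(\rn)$.} Let $u\in H_{\fai,\,2}(\rr^{n+1}_+)$. It suffices to treat $u\in H_\fai(\rr^{n+1}_+)\cap H^2(\rr^{n+1}_+)$ and then pass to completions. For such $u$, the $L^2$ Fatou theorem provides a boundary function $f:=\lim_{t\to0^+}u(\cdot,\,t)\in L^2(\rn)$ (limit in $L^2$), and by Remark~\ref{rem MOHHFS P>1} the semigroup formula \eqref{eqn SGF} holds, which when combined with $u(\cdot,\,s)\to f$ in $L^2$ and the continuity of convolution with $P_t$ yields $u(x,\,t)=(f*P_t)(x)$ for all $(x,\,t)\in\rr^{n+1}_+$. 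Then the non-tangential maximal function $u^*$ equals $f_P^*$, so $f_P^*=u^*\in L^\fai(\rn)$; since $f\in L^2(\rn)$ is a bounded distribution, Proposition~\ref{pro PIC} gives $f\in H_\fai(\rn)$ with $\|f\|_{H_\fai(\rn)}\sim\|f_P^*\|_{L^\fai(\rn)}=\|u\|_{H_\fai(\rr^{n+1}_+)}$. For general $u\in H_{\fai,\,2}(\rr^{n+1}_+)$, approximate $u$ by $u_k\in H_\fai(\rr^{n+1}_+)\cap H^2(\rr^{n+1}_+)$ in the $H_\fai(\rr^{n+1}_+)$-quasi-norm; the boundary functions $f_k\in H_\fai(\rn)$ form a Cauchy sequence by the norm equivalence, converging to some $f\in H_\fai(\rn)$, and taking limits in $u_k(x,\,t)=(f_k*P_t)(x)$ shows $u(x,\,t)=(f*P_t)(x)$ with the desired norm comparison.

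\emph{Main obstacle.} The delicate point is handling a general element of the \emph{completion} $H_{\fai,\,2}(\rr^{n+1}_+)$: a priori such $u$ is only a limit in the $H_\fai(\rr^{n+1}_+)$-quasi-norm of harmonic $H^2$ functions, and one must verify that this limit is itself a genuine harmonic function on $\rr^{n+1}_+$ represented by a Poisson integral of an $H_\fai(\rn)$ function. I expect to control this by showing that $\|\cdot\|_{H_\fai(\rr^{n+1}_+)}$-convergence forces local uniform convergence on compact subsets of $\rr^{n+1}_+$ (via pointwise domination of $|u(y,\,t)|$ by $u^*(x)$ for $x$ near $y$, together with the fact that $L^\fai$-bounds give, through the lower-type $p$ property of $\fai$ as in \eqref{eqn difficult}, pointwise control of $u$ on compacta bounded away from the boundary), whence the limit is harmonic; harmonicity and the norm equivalence then do the rest. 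The quasi-norm (rather than norm) nature of $\|\cdot\|_{H_\fai}$ requires mild care with the triangle-type inequality, but this is standard for these spaces.
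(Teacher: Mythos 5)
Your proof is correct and the overall skeleton (forward direction via Proposition \ref{pro PIC}, inverse direction via the $H^2$ Poisson representation and then Proposition \ref{pro PIC} again) is the same as the paper's. There are two points of divergence worth noting.

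For the inverse direction on the dense subset $H_\fai(\rr^{n+1}_+)\cap H^2(\rr^{n+1}_+)$, you invoke the $L^2$ Fatou theorem to obtain $f=\lim_{t\to 0^+}u(\cdot,t)$ in $L^2$ and then identify $u=f*P_t$; the paper instead sets $f_\epsilon:=u(\cdot,\epsilon)$, shows $\{f_\epsilon\}_{\epsilon>0}$ is bounded in $H_\fai(\rn)$ by \eqref{2.y1}, extracts a $\mathcal{S}'(\rn)$-weak-$*$ subsequential limit $f$, and uses Fatou's lemma for $L^\fai$ to conclude $\|f\|_{H_\fai(\rn)}\ls\|u\|_{H_\fai(\rr^{n+1}_+)}$. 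Both routes are valid; yours is more direct because it exploits the $L^2$ structure that is built into $H_{\fai,2}(\rr^{n+1}_+)$, while the paper's weak-$*$ argument is a more generic template (the kind one would need if no $L^2$ representation were available). Your route also makes the equality $u^*=f_P^*$ immediate, whereas the paper needs a short $\liminf$ argument.

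For extending to general elements of the completion, the paper argues more cleanly than you do: since $H_\fai(\rn)=\overline{H_\fai(\rn)\cap L^2(\rn)}^{\|\cdot\|_{H_\fai(\rn)}}$ by Remark \ref{rem defMOH}(iii) and $H_{\fai,2}(\rr^{n+1}_+)$ is \emph{defined} as the completion of $H_\fai(\rr^{n+1}_+)\cap H^2(\rr^{n+1}_+)$, establishing that the Poisson integral is an isomorphism between these two dense subsets (with two-sided norm estimates) automatically yields an isomorphism between the completions, and the issue you flag as the ``main obstacle'' is circumvented rather than confronted. Your direct limiting argument is salvageable along the lines you sketch (local uniform convergence via pointwise domination by the maximal function, plus $L^\fai$ bounds giving pointwise control on compact sets bounded away from $\rn\times\{0\}$), but it is more labor than the abstract-completion route, and you should be explicit that the resulting $u$ agrees with the a priori given harmonic function in the statement of the proposition.
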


\begin{proof}
By Definition \ref{def MOHH space p>1} and Remark \ref{rem defMOH}(iii),
to prove Proposition \ref{pro HFC},
it suffices to show that the Poisson integral $P_t$ is an isomorphism
from $(H_\fai(\rn)\cap L^2(\rn),\,\|\cdot\|_{H_\fai(\rn)})$ to $(H_\fai(\rr^{n+1}_+)\cap H^2(\rr^{n+1}_+),\,\|\cdot\|_{H_\fai(\rr^{n+1}_+)})$.
Recall that the Poisson integral is an isomorphism from $L^2(\rn)$ to
$ H^2(\rr^{n+1}_+)$ (see, for example, \cite[Theorem 7.17]{ABR01}).

The inclusion that $$P_t\lf(H_\fai(\rn)\cap L^2(\rn),\,\|\cdot\|_{H_\fai(\rn)}\r)
\subset \lf(H_\fai(\rr^{n+1}_+)\cap H^2(\rr^{n+1}_+),\,\|\cdot\|_{H_\fai(\rr^{n+1}_+)}\r)$$
is an easy consequence of Proposition \ref{pro PIC}, the details being omitted.

We now turn to the inverse inclusion.
Let $u\in H_\fai(\rr^{n+1}_+)\cap H^2(\rr^{n+1}_+)$.
For any $x\in\rn$ and $\epsilon\in(0,\,\fz)$, let $u_\epsilon(x,\,t):=u(x,\,t+\epsilon)$.
Since $u\in H^2(\rr^{n+1}_+)$, we know that $u_\epsilon$ can be represented
as a Poisson integral: $u_\epsilon(x,\,t)=\lf(f_\epsilon*P_t\r)(x)$
for all $(x,\,t)\in\rr^{n+1}_+$, where $f_\epsilon(x):=u(x,\,\epsilon)$.
Moreover, from Proposition \ref{pro PIC} and the definition of the
non-tangential maximal function, it follows that
\begin{eqnarray}\label{2.y1}
\qquad\quad\dsup_{\epsilon\in(0,\,\fz)}\lf\|f_\epsilon\r\|_{H_\fai(\rn)}\sim
\dsup_{\epsilon\in(0,\,\fz)}
\lf\|\lf(f_\epsilon*P_t\r)^*\r\|_{L^\fai(\rn)} \sim \dsup_{\epsilon\in(0,\,\fz)}
\lf\|u^*_\epsilon\r\|_{L^\fai(\rn)} \ls
\lf\|u^*\r\|_{L^\fai(\rn)}.
\end{eqnarray}
Thus, $\{f_\epsilon\}_{\epsilon\in(0,\,\fz)}$ is a bounded set in $H_\fai(\rn)$ and
hence in $\mathcal{S}'(\rn)$ (see \cite[Proposition 5.1]{Ky11}).
By the weak compactness
of $\mathcal{S}'(\rn)$ (see, for example, \cite[p.\,119]{St91}), we conclude that there exist an
$f\in \mathcal{S}'(\rn)$ and a subsequence $\{f_{k}\}_{k\in\nn}$ such that $\{f_{k}\}_{k\in\nn}$
converges weakly to $f$ in $\mathcal{S}'(\rn)$.
This, together with \eqref{eqn decomposition PK}, implies that,
for all $(x,\,t)\in\rr^{n+1}_+$,
$$\lim_{k\to \fz} (f_k*P_t)(x)=(f*P_t)(x)=u(x,\,t).$$
Thus, by Proposition \ref{pro PIC},  Fatou's lemma and \eqref{2.y1},
we conclude that
\begin{eqnarray*}
\lf\|f\r\|_{H_\fai(\rn)}&&\sim\lf\|\lim_{k\to \fz}  (f_k)_{P}^*\r\|_{L^\fai(\rn)}\ls
\mathop{\underline{\lim}}_{k\to \fz} \lf\| (f_k)_{P}^*\r\|_{L^\fai(\rn)}\\
&&\sim \mathop{\underline{\lim}}_{k\to \fz} \|f_k\|_{H_\fai(\rn)}\ls
\|u^*\|_{L^\fai(\rn)}\sim \|u\|_{H_\fai(\rr^{n+1}_+)},
\end{eqnarray*}
which immediately implies that $f\in H_\fai(\rn)$, $u(x,\,t)=f*P_t(x)$ and hence completes
the proof of Proposition \ref{pro HFC}.
\end{proof}

\subsection{Musielak-Orlicz-Hardy spaces $\mathcal{H}_\fai(\rr^{n+1}_+)$ of harmonic vectors\label{s23}}
\hskip\parindent
In this subsection, we study the Musielak-Orlicz-Hardy space
$\mathcal{H}_\fai(\rr^{n+1}_+)$
consisting of vectors of harmonic functions
which satisfy the so-called
generalized Cauchy-Riemann equation.
To be precise, let $F:=\{u_0,\,u_1,\,\ldots,\,u_n\}$
be a harmonic vector on $\rr^{n+1}_+$. Then $F$
is said to satisfy the \emph{generalized Cauchy-Riemann equation},
if, for all $j,\,k\in\{0,\,\ldots,\,n\}$,
\begin{eqnarray}\label{eqn GCR equation}
\begin{cases}
\dsum_{j=0}^n \frac{\pat u_j}{\pat x_j}=0,\\
{\dfrac{\pat u_j}{\pat x_k}=\frac{\pat u_k}{\pat x_j}},
\end{cases}
\end{eqnarray}
where, for $(x,\,t)\in\rr^{n+1}_+$, we let $x:=(x_1,\,\ldots,\,x_n)$ and $x_0:=t$.

\begin{definition}\label{def MOHVH space}
Let $\fai$ be a Musielak-Orlicz function satisfying Assumption $(\fai)$.
The \emph{Musielak-Orlicz-Hardy space} $\mathcal{H}_\fai(\rr^{n+1}_+)$
\emph{of harmonic vectors} is defined to be the space of all
harmonic vectors $F:=\{u_0,\,u_1,\,\ldots,\,
u_n\}$ on $\rr^{n+1}_+$ satisfying \eqref{eqn GCR equation}
such that, for all $t\in(0,\,\fz)$,
$$|F(\cdot,\,t)|:=\lf\{\sum_{j=0}^n
|u_j(\cdot,\,t)|^2\r\}^{1/2}\in L^\fai(\rn).$$
Moreover, for any $F\in \mathcal{H}_\fai(\rr^{n+1}_+)$,
its \emph{quasi-norm} is defined by setting,
\begin{eqnarray*}
\lf\|F\r\|_{\mathcal{H}_\fai(\rr^{n+1}_+)}:=\dsup_{t\in(0,\,\fz)}
\lf\|\lf|F(\cdot,\,t)\r|\r\|_{L^\fai(\rn)}.
\end{eqnarray*}

For $p\in(1,\,\fz)$, the \emph{Musielak-Orlicz-Hardy space} $\mathcal{H}^p(\rr^{n+1}_+)$
\emph{of harmonic vectors} is defined as $ \mathcal{H}_\fai(\rr^{n+1}_+)$
with $L^\fai(\rn)$ replaced by $L^p(\rn)$. In particular, for
any $F\in \mathcal{H}^p(\rr^{n+1}_+)$, its \emph{norm} is defined by setting,
\begin{eqnarray*}
\lf\|F\r\|_{\mathcal{H}^p(\rr^{n+1}_+)}:=\dsup_{t\in(0,\,\fz)}
\lf\|\lf|F(\cdot,\,t)\r|\r\|_{L^p(\rn)}.
\end{eqnarray*}
Moreover, let
\begin{eqnarray*}
\mathcal{H}_{\fai,\,2}(\rr^{n+1}_+):=\overline{\mathcal{H}_{\fai}(\rr^{n+1}_+)
\cap \mathcal{H}^{2}(\rr^{n+1}_+)}^{\|\cdot\|_{\mathcal{H}_{\fai}(\rr^{n+1}_+)}}
\end{eqnarray*}
be the \emph{completion} of the set $\mathcal{H}_{\fai}(\rr^{n+1}_+)
\cap \mathcal{H}^{2}(\rr^{n+1}_+)$ under the quasi-norm
$\|\cdot\|_{\mathcal{H}_{\fai}(\rr^{n+1}_+)}$.
\end{definition}

\begin{remark}\label{rem defMOHHV}
The space $\mathcal{H}^p(\rr^{n+1}_+)$ was first introduced by Stein and
Weiss to give a higher dimensional generalization of the Hardy space
on the upper plane (see \cite{SW60,SW68,SW71} for more details).
\end{remark}

For any $F\in \mathcal{H}_\fai(\rr^{n+1}_+)$, we have the following technical lemmas,
respectively, on the harmonic majorant  and the boundary value of $F$.

\begin{lemma}\label{lem harmonic majorant of MOHVH}
Assume that the function $\fai$ satisfies Assumption $(\fai)$
with $\frac{i(\fai)}{q(\fai)}>\frac{n-1}{n}$ and $F:=\{u_0,\,u_1,\,\ldots,\,
u_n\}\in \mathcal{H}_\fai(\rr^{n+1}_+)$, where $i(\fai)$ and $q(\fai)$
are as in \eqref{1.6} and \eqref{1.7}, respectively.
Then, for all $q\in[\frac{n-1}{n},\,
\frac{i(\fai)}{q(\fai)})$, $a\in(0,\,\fz)$ and $(x,\,t)\in \rr^{n+1}_+$,
\begin{eqnarray}\label{eqn harmoic marjant}
\lf|F(x,\,t+a)\r|^q\le \lf(\lf|F(x,\,a)\r|^q *P_t\r)(x),
\end{eqnarray}
where $P_t$ is the Poisson kernel as in \eqref{eqn PK}.
\end{lemma}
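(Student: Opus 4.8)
The plan is to prove the subharmonicity-type inequality \eqref{eqn harmoic marjant} by combining the classical Stein--Weiss estimate on generalized Cauchy--Riemann systems with a convergence argument that lets us pass from the Poisson kernel of a truncated half-space back to $\rr^{n+1}_+$. The key classical fact, due to Stein and Weiss (see \cite{SW60,SW68}), is that if $F:=\{u_0,u_1,\ldots,u_n\}$ is a harmonic vector on $\rr^{n+1}_+$ satisfying the generalized Cauchy--Riemann equation \eqref{eqn GCR equation}, then $|F|^q$ is subharmonic on $\rr^{n+1}_+$ for every $q\ge\frac{n-1}{n}$, wherever $|F|>0$; and by a standard approximation (replacing $|F|$ by $(|F|^2+\epsilon^2)^{1/2}$ and letting $\epsilon\to0^+$) the subharmonicity in fact holds on all of $\rr^{n+1}_+$ without the positivity restriction. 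Since $\frac{i(\fai)}{q(\fai)}>\frac{n-1}{n}$, the interval $[\frac{n-1}{n},\frac{i(\fai)}{q(\fai)})$ is nonempty and every $q$ in it is $\ge\frac{n-1}{n}$, so this range of $q$ is exactly the admissible one for the subharmonicity lemma.

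First I would fix $q\in[\frac{n-1}{n},\frac{i(\fai)}{q(\fai)})$, $a\in(0,\fz)$, and set $G(x,t):=|F(x,t+a)|^q$, which is a nonnegative subharmonic function on $\rr^{n+1}_+$, continuous up to the boundary $t=0$ with boundary values $|F(x,a)|^q$. The natural next step is to say that $G$ has a least harmonic majorant given by the Poisson integral of its boundary values, i.e. $G(x,t)\le(|F(\cdot,a)|^q*P_t)(x)$, which is exactly \eqref{eqn harmoic marjant}. To justify this rigorously I would work on the truncated domain $\rn\times(\delta,\fz)$ for small $\delta>0$ (or on large balls) and use the sub-mean-value property together with the maximum principle: on a large half-ball $B_R^+\subset\rr^{n+1}_+$, $G$ is dominated by the harmonic function with the same boundary data on $\partial B_R^+$; one then lets $R\to\fz$, controlling the contribution of the spherical part of $\partial B_R^+$ using that $|F(\cdot,t)|\in L^\fai(\rn)$ uniformly in $t$ (together with the lower type of $\fai$, which forces the appropriate decay so the spherical integral tends to $0$). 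Alternatively, and more cleanly, one can invoke the semigroup/Poisson-integral representation available for $F\in\mathcal H^2(\rr^{n+1}_+)$ to first establish \eqref{eqn harmoic marjant} for $F\in\mathcal H_\fai(\rr^{n+1}_+)\cap\mathcal H^2(\rr^{n+1}_+)$, where each $u_j(\cdot,t)\in L^2(\rn)$ and the majorant identity for $|F|^q$ is classical (Stein--Weiss), and then pass to general $F\in\mathcal H_\fai(\rr^{n+1}_+)$ by a density/limiting argument through $\mathcal H_{\fai,2}(\rr^{n+1}_+)$ using Fatou's lemma.

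The main obstacle I anticipate is the passage to the limit $R\to\fz$ (equivalently, verifying that $|F|^q$ has a harmonic majorant at all, rather than merely being subharmonic): a priori a nonnegative subharmonic function need not be bounded by the Poisson extension of its boundary trace, and one must genuinely use the integrability hypothesis $\sup_{t>0}\||F(\cdot,t)|\|_{L^\fai(\rn)}<\fz$ to rule out growth at infinity. Here the condition $q<\frac{i(\fai)}{q(\fai)}$ enters decisively: since $i(\fai)$ is a lower type index and $q(\fai)$ controls how $\fai(\cdot,t)$ sits inside a Muckenhoupt class, membership of $|F(\cdot,a)|^q$ in a suitable (unweighted or weighted) Lebesgue/Orlicz space with exponent bounded away from $1$ guarantees both that the Poisson integral $(|F(\cdot,a)|^q*P_t)(x)$ converges absolutely for every $(x,t)\in\rr^{n+1}_+$ and that the remainder terms in the maximum-principle comparison on $B_R^+$ vanish as $R\to\fz$. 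Once this growth control is in hand, the inequality \eqref{eqn harmoic marjant} follows from the maximum principle applied to the subharmonic function $G$ minus the harmonic Poisson integral of its boundary data. I would also record, for later use, that \eqref{eqn harmoic marjant} combined with the $L^\fai$-boundedness of the Hardy--Littlewood maximal function (Lemma \ref{lem bd HLMF}, after the standard $\fai\mapsto\fai_p$ reduction as in the proof of Proposition \ref{pro radial mc}) yields the non-tangential maximal estimate $\|\,|F|^*\|_{L^\fai(\rn)}\ls\|F\|_{\mathcal H_\fai(\rr^{n+1}_+)}$, which is the role this lemma plays in the sequel.
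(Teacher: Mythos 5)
Your plan correctly identifies the starting point — the Stein--Weiss subharmonicity of $|F|^q$ for $q\ge\frac{n-1}{n}$ — and correctly identifies that the crux is showing that this subharmonic function is \emph{in fact dominated} by the Poisson integral of its boundary values, which requires some growth control at infinity. However, the proposal leaves a genuine gap exactly at this crux, and the sketch you give points at the wrong quantity. Nualtaranee's criterion (which the paper invokes, \cite[p.\,245, Theorem 2]{Nu73}) says that the desired majorization holds once one verifies
\begin{equation*}
\lim_{t\to\fz}K\bigl(|F|^q,\,t\bigr)=0,\qquad
K\bigl(|F|^q,\,t\bigr):=\int_{\rn}\frac{|F(x,\,t)|^q}{(|x|+1+t)^{n+1}}\,dx,
\end{equation*}
i.e.\ a decay of a weighted spatial integral of the subharmonic function \emph{along high slices} $t\to\fz$, not merely integrability of the boundary trace $|F(\cdot,a)|^q$. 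Your argument asserts that membership of the boundary value in ``a suitable Lebesgue/Orlicz space with exponent bounded away from $1$'' makes the spherical remainders vanish, but that is not where the control comes from; what is actually used is the uniform bound $\sup_{t>0}\int_\rn\fai(x,|F(x,\,t)|)\,dx<\fz$ together with a nontrivial weighted estimate. Concretely, the paper splits the integral over $\{|F|\ge1\}$ and $\{|F|<1\}$, applies H\"older with a carefully chosen $r\in(q(\fai),\,\tfrac{i(\fai)n}{n-1})$, and uses that $w:=[\fai(\cdot,1)]^{-r'/r}\in A_{r'}(\rn)$ satisfies the $B_{r'}$-condition of Hunt--Muckenhoupt--Wheeden to produce the factor $\frac{1}{1+t}$. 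None of this is present or even signaled in your sketch, and it is precisely the mechanism by which the hypothesis $q<\frac{i(\fai)}{q(\fai)}$ enters.

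Your alternative route — first proving the majorant inequality for $F\in\mathcal H_\fai(\rr^{n+1}_+)\cap\mathcal H^2(\rr^{n+1}_+)$ and then passing to the limit through $\mathcal H_{\fai,\,2}(\rr^{n+1}_+)$ — has a second, structural gap: the lemma is stated for arbitrary $F\in\mathcal H_\fai(\rr^{n+1}_+)$, which is a space of genuine harmonic vectors on $\rr^{n+1}_+$, whereas $\mathcal H_{\fai,\,2}(\rr^{n+1}_+)$ is an abstract completion of $\mathcal H_\fai\cap\mathcal H^2$; there is no density of $\mathcal H_\fai\cap\mathcal H^2$ inside $\mathcal H_\fai(\rr^{n+1}_+)$ available, so the limiting argument would not reach the generality required. (And, indeed, the paper uses this lemma precisely to build that bridge in Propositions~\ref{pro HVC}--\ref{pro MOHtoMOHVH}, so it cannot presuppose it.) In short: the subharmonicity input and the shape of the conclusion are right, but the missing step — verifying the Nualtaranee decay of $K(|F|^q,\,t)$ via $A_r$/$B_{r'}$ estimates — is the heart of the lemma and must be carried out.
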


\begin{proof}
For all $t\in[0,\fz)$, let
\begin{eqnarray*}
K(|F|^q,\,t):=\dint_{\rn} \frac{|F(x,\,t)|^q}{(|x|+1+t)^{n+1}}\,dx.
\end{eqnarray*}
Since $|F|^q$ is subharmonic on $\rr^{n+1}_+$ (see \cite[p.\,234, Theorem 4.14]{St70}),
by \cite[p.\,245, Theorem 2]{Nu73}, in order to prove \eqref{eqn harmoic marjant}, it
suffices to show that
\begin{eqnarray}\label{eqn est1 lem HMMOH}
\lim_{t\to \fz}K(|F|^q,\,t)=0.
\end{eqnarray}
We now prove \eqref{eqn est1 lem HMMOH}. Write
\begin{eqnarray}\label{eqn est2 lem HMMOH}
\quad K(|F|^q,\,t)&&=\dint_{\{x\in\rn:\ |F(x,\,t)|\ge 1\}}\frac{|F(x,\,t)|^q}{(|x|+1+t)^{n+1}}\,dx
+\dint_{\{x\in\rn:\ |F(x,\,t)|< 1\}}\cdots\\
&&\nonumber=:\mathrm{I}+\mathrm{II}.
\end{eqnarray}

We first estimate $\mathrm{I}$. By choosing $r\in(q(\fai),\,\fz)$
satisfying $r<\frac{i(\fai)n}{n-1}$
and $\frac{n-1}{n}\le q<\frac{i(\fai)}{r}$, we know that, for all $(x,\,t)\in\rr^{n+1}_+$,
$\fai(\cdot,\,t)\in A_r(\rn)$ and $\fai(x,\,\cdot)$ is of lower type $qr$,
which, together with H\"older's inequality, further implies that
\begin{eqnarray}\label{eqn est3 lem HMMOH}
\mathrm{I}&&\ls \lf\{\dint_{\{x\in\rn:\ |F(x,\,t)|\ge 1\}}|F(x,\,t)|^{qr}
\fai(x,\,1)\,dx\r\}^{\frac{1}{r}}\\
&&\nonumber\hs\times \lf\{\dint_{\{x\in\rn:\ |F(x,\,t)|\ge 1\}}
\frac{1}{(|x|+1+t)^{(n+1)r'}}\lf[\fai(x,\,1)\r]^{-r'/r}\,dx\r\}^{\frac{1}{r'}}\\
&&\nonumber\ls \lf\{\dint_{\{x\in\rn:\ |F(x,\,t)|\ge 1\}}
\fai(x,\,|F(x,\,t)|)\,dx\r\}^{1/r} \\
&&\nonumber\hs\times\lf\{\dint_{\{x\in\rn:\ |F(x,\,t)|\ge 1\}}\frac{1}
{(|x|+1+t)^{(n+1)r'}} {\lf[\fai(x,\,1)\r]^{-r'/r}}\,dx\r\}^{1/r'}.
\end{eqnarray}
Since $\fai(\cdot,\,1)\in A_r(\rn)$, we see
$w(\cdot):=[\fai(\cdot,\,1)]^{-r'/r}\in A_{r'}(\rn)$ (see, for example,
\cite[p.\,394, Theorem 1.14(c)]{GR85}),
which, together with \cite[Lemma 1]{HMW73}, implies that $w$ satisfies the so-called
$B_{r'}(\rn)$-\emph{condition}, namely, for all $x\in\rn$,
\begin{eqnarray}\label{eqn Br condition}
\dint_{\rn}\frac{w(y)}{(t+|x-y|)^{nr'}}\,dy\ls t^{-nr'}\dint_{B(x,\,t)}w(y)\,dy.
\end{eqnarray}
By this, together with \eqref{eqn est3 lem HMMOH}, we further see that
\begin{eqnarray}\label{eqn est4 lem HMMOH}
\mathrm{I}\ls&& \frac{1}{1+t}\lf\{\dint_{\rn}
\fai(x,\,|F(x,\,t)|)\,dx\r\}^{1/r} \lf\{\dint_{\rn} \frac{\lf[\fai(x,\,1)\r]^{-r'/r}}
{(|x|+1+t)^{nr'}}\,dx\r\}^{1/r'}\\
\nonumber\le && C_{(\fai),\,1} \frac{1}{1+t},
\end{eqnarray}
where $C_{(\fai),\,1}$ is a positive constant, depending on $\fai$, but independent of
$t$.

To estimate the term $\mathrm{II}$, let $\wz{r}:=\frac{1}{q}$. It is easy to see that
$r<\frac{i(\fai)}{q}\le \frac{1}{q}=\wz{r}$. Thus, $\fai(\cdot,\,1)\in A_{\wz{r}}(\rn)$,
which, together with H\"older's inequality, the upper type 1 property of $\fai(x,\cdot)$
and \eqref{eqn Br condition}, implies that
\begin{eqnarray}\label{eqn est5 lem HMMOH}
\mathrm{II}&&\ls\lf\{\dint_{\rn}
|F(x,\,t)|^{q\wz r}\fai(x,\,1)\,dx\r\}^{1/\wz{r}}\\
&&\hs\nonumber\times \lf\{\dint_{B(0,\,1)}
\frac{1}{(|x|+1+t)^{(n+1)\wz{r}'}}\lf[\fai(x,\,1)\r]^{-\wz{r}'/\wz{r}}\,dx\r\}^{1/\wz{r}'}\\
&& \nonumber\ls \frac{1}{1+t}\lf\{\dint_{\rn}
\fai(x,\,|F(x,\,t)|)\,dx\r\}^{1/\wz{r}} \lf\{\dint_{B(0,\,1)}
\lf[\fai(x,\,1)\r]^{-\wz{r}'/\wz{r}}\,dx\r\}^{1/\wz{r}'}
\\ \nonumber &&\le C_{(\fai),\,2} \frac{1}{1+t},
\end{eqnarray}
where $C_{(\fai),\,2}$ is a positive constant, depending on $\fai$, but independent of
$t$.
Combining \eqref{eqn est2 lem HMMOH}, \eqref{eqn est4 lem HMMOH} and
\eqref{eqn est5 lem HMMOH}, we see that \eqref{eqn est1 lem HMMOH} holds true.
This finishes the proof of Lemma \ref{lem harmonic majorant of MOHVH}.
\end{proof}

\begin{lemma}\label{lem boudnary value of MOHVH}
Assume that the function $\fai$ satisfies Assumption $(\fai)$ with
$\frac{i(\fai)}{q(\fai)}>\frac{n-1}{n}$ and
$F:=\{u_0,\,u_1,\,\ldots,\,u_n\}\in \mathcal{H}_\fai(\rr^{n+1}_+)$, where
$i(\fai)$ and $q(\fai)$ are as in \eqref{1.6} and
\eqref{1.7}, respectively. Then there
exists $h\in L^\fai(\rn)$ such that $\lim_{t\to 0}|F(\cdot,\,t)|=h(\cdot)$ in $L^\fai(\rn)$
and $h$ is the non-tangential limit of $F$ as $t\to 0$ almost everywhere,
namely, for almost every $x_0\in\rn$,
$\lim_{(x,\,t)\to (x_0,\,0^+)}|F(x,\,t)|=h(x_0)$
for all $(x,\,t)$ in the cone $\bgz(x_0):=\{(x,\,t)\in\rr^{n+1}_+:\ |x-x_0|<t\}$.
Moreover, for all $q\in[\frac{n-1}{n},\,\frac{i(\fai)}{q(\fai)})$ and $(x,\,t)\in\rr^{n+1}_+$,
\begin{eqnarray}\label{eqn harmoic marjant BV}
\lf|F(x,\,t)\r|\le \lf[\lf(h^q*P_t\r)(x)\r]^{1/q},
\end{eqnarray}
where $P_t$ is the Poisson kernel as in \eqref{eqn PK}.
\end{lemma}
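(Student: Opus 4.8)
The plan is to bootstrap from the harmonic majorant estimate already established in Lemma \ref{lem harmonic majorant of MOHVH}. First I would fix $q\in[\frac{n-1}{n},\,\frac{i(\fai)}{q(\fai)})$ and, for each $a\in(0,\,\fz)$, use \eqref{eqn harmoic marjant} to control $|F(\cdot,\,t+a)|$ by the Poisson integral of $|F(\cdot,\,a)|^q$; combined with the boundedness of the Hardy--Littlewood maximal function on $L^{\fai_q}(\rn)$ (Lemma \ref{lem bd HLMF}, applied to $\fai_q(x,t):=\fai(x,t^{1/q})$, which satisfies $q(\fai_q)<i(\fai_q)$ precisely because $q<\frac{i(\fai)}{q(\fai)}$), this shows that the family $\{|F(\cdot,\,a)|\}_{a\in(0,\,\fz)}$ is bounded in $L^\fai(\rn)$, with bound comparable to $\|F\|_{\mathcal H_\fai(\rr^{n+1}_+)}$. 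The key point here is that the Poisson integral of a function is dominated pointwise by a constant times its Hardy--Littlewood maximal function, so the $L^{\fai}$-norm of $|F(\cdot,\,t+a)|$ is dominated uniformly by that of $|F(\cdot,\,a)|$, hence by $\|F\|_{\mathcal H_\fai(\rr^{n+1}_+)}$.

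Next I would produce the boundary function $h$. Since $F$ is a harmonic vector with $|F(\cdot,\,t)|$ uniformly in $L^\fai(\rn)$, and since on any compact subset of $\rr^{n+1}_+$ the components $u_j$ are bounded (harmonicity plus the local $L^1$ control coming from $A_\fz$-weight estimates as in \eqref{eqn difficult}), I can extract, via a normal-families/weak-compactness argument, a sequence $t_k\downarrow 0$ along which $F(\cdot,\,t_k)$ converges; the subharmonicity of $|F|^q$ together with the majorant \eqref{eqn harmoic marjant} forces $|F(x,\,t)|$ to be controlled by $[(|F(\cdot,\,a)|^q*P_{t-a})(x)]^{1/q}$ for $a<t$, and letting $a\to 0$ along the chosen sequence yields a limit function $h$ with $|F(x,\,t)|\le[(h^q*P_t)(x)]^{1/q}$, which is \eqref{eqn harmoic marjant BV}. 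From this inequality and Fatou's lemma one gets $h\in L^\fai(\rn)$ with $\|h\|_{L^\fai(\rn)}\ls\|F\|_{\mathcal H_\fai(\rr^{n+1}_+)}$. To upgrade convergence of $|F(\cdot,\,t)|$ to $h$ from subsequential-weak to genuine $L^\fai(\rn)$-convergence as $t\to0$, I would use the uniform integrability provided by the lower/upper type conditions on $\fai$ together with the near-monotonicity furnished by \eqref{eqn harmoic marjant}: the majorant inequality gives $\big||F(\cdot,\,t+a)|-|F(\cdot,\,a)|\big|$ small in $L^\fai$ for $a$ small, uniformly in $t$, which is the Cauchy criterion.

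For the almost-everywhere non-tangential limit, I would invoke the classical non-tangential convergence theory for subharmonic functions with harmonic majorants (as in \cite[p.\,234, Theorem 4.14]{St70} and \cite[p.\,245, Theorem 2]{Nu73}, already cited in Lemma \ref{lem harmonic majorant of MOHVH}): because $|F|^q$ has the harmonic majorant $u(x,\,t):=(h^q*P_t)(x)$ by \eqref{eqn harmoic marjant BV}, and $h^q\in L^1_{\loc}(\rn)$ (in fact, $h\in L^\fai(\rn)\subset L^{q}_{\loc}$ via the $A_\fz$-weight machinery), standard Fatou-type theorems give that $u$ has non-tangential limits a.e. equal to $h^q$, and sandwiching $|F(x,\,t)|^q$ between $0$ and $u(x,\,t)$, together with the fact that $\|\,|F(\cdot,\,t)|^q\,\|_{L^1_{\loc}}\to\|h^q\|_{L^1_{\loc}}$, forces the non-tangential limit of $|F(x,\,t)|$ to be exactly $h(x_0)$ for a.e.\ $x_0$.

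The main obstacle I anticipate is the passage from weak/subsequential convergence to the strong $L^\fai(\rn)$-convergence statement $\lim_{t\to0}|F(\cdot,\,t)|=h$, because $L^\fai(\rn)$ is only a quasi-Banach space with a non-homogeneous modular, so the usual uniform-integrability arguments must be recast in terms of the modular $\int_{\rn}\fai(x,\,\cdot)\,dx$ and the lower-type $p$ estimate; carefully exploiting the majorant \eqref{eqn harmoic marjant} to get the Cauchy property of $\{|F(\cdot,\,t)|\}_{t>0}$ directly in the modular, rather than through a dominated-convergence shortcut, is the delicate part.
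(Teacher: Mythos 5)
Your high-level plan (majorant from Lemma \ref{lem harmonic majorant of MOHVH} $\Rightarrow$ extract a boundary limit $\Rightarrow$ harmonic majorant \eqref{eqn harmoic marjant BV} $\Rightarrow$ $h\in L^\fai(\rn)$ and convergence) follows the same outline as the paper's, but three steps in your proposal are underspecified or would not go through as stated.

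First, the extraction of $h$. You invoke a ``normal-families/weak-compactness argument'' without saying in which function space on $\rn$ you are extracting a weak limit. Normal families for the harmonic $u_j$ only reproduce $F$ itself in the interior; they do not furnish a boundary object. And $L^\fai(\rn)$ is merely a quasi-Banach space, so weak-$*$ compactness is not available off the shelf; worse, even after one shows, as the paper does, that $L^{\fai_q}(\rn)$ embeds in the dual of $L^{\fai_q^*}(\rn)$, a $*$-weak limit does not obviously give the pointwise convergence of Poisson integrals $(|F(\cdot,t_k)|^q * P_t)(x)\to(\wz h * P_t)(x)$ needed to pass to the limit in \eqref{eqn harmoic marjant}: one must verify that $P_t(x-\cdot)$ is an admissible test function. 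The paper's workaround is essential here, not cosmetic: it splits $F=F_1+F_2$ according to $|F|\ge1$ or $|F|<1$, places $\{|F_1(\cdot,t)|^q\}_t$ in the reflexive weighted space $L^r_{\fai(\cdot,1)}(\rn)$ (via the lower-type estimate) and $\{|F_2(\cdot,t)|^q\}_t$ in $L^\fz(\rn)$, and then checks explicitly, using the $B_{r'}$-condition \eqref{eqn Br condition}, that the test function $g(y)=P_t(x-y)/\fai(y,1)$ lies in $L^{r'}_{\fai(\cdot,1)}(\rn)$ (resp.\ $L^1_{\fai(\cdot,1)}(\rn)$). Your proposal skips the step that actually makes the Poisson integrals converge.

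Second, your Cauchy-criterion argument for the strong $L^\fai(\rn)$-convergence. You claim the majorant inequality \eqref{eqn harmoic marjant} makes $\bigl||F(\cdot,t+a)|-|F(\cdot,a)|\bigr|$ small in $L^\fai(\rn)$ uniformly in $t$. But \eqref{eqn harmoic marjant} is a one-sided pointwise upper bound $|F(x,t+a)|^q\le(|F(\cdot,a)|^q*P_t)(x)$; it gives no control of the difference $|F(\cdot,t+a)|-|F(\cdot,a)|$ from below, so the Cauchy property does not follow. The paper's route is simpler: once the a.e.\ non-tangential limit $h$ and $h\in L^\fai(\rn)$ are in hand, strong $L^\fai$-convergence follows from the dominated convergence theorem together with the uniform modular bound built into $F\in\mathcal H_\fai(\rr^{n+1}_+)$.

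Third, your argument for the a.e.\ non-tangential limit. Sandwiching $|F(x,t)|^q$ between $0$ and the Poisson integral of $h^q$ only gives $\limsup_{(x,t)\to(x_0,0^+)}|F(x,t)|^q\le h^q(x_0)$; convergence of local $L^1$ norms does not upgrade this to pointwise a.e.\ equality (the standard oscillation counterexamples defeat it). The correct mechanism, used in the paper, is different in kind: from the majorant one shows $|F|^*(x)<\fz$ for a.e.\ $x$, then applies the local Fatou theorem to each \emph{individual harmonic component} $u_j$ (whose non-tangential maximal function is dominated by $|F|^*$), obtains a.e.\ non-tangential limits of each $u_j$, and identifies the limit of $|F|^q$ with $\wz h$ by uniqueness.

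A smaller point: $h\in L^\fai(\rn)$ by Fatou's lemma is fine once pointwise a.e.\ convergence of $|F(\cdot,t_k)|$ to $h$ is established, but in the paper the order of logic is the reverse: a separate $*$-weak compactness argument in $L^{\fai_q}(\rn)$ (after verifying that $\fai_q$ is equivalent to an $N$-function and $L^{\fai_q}(\rn)$ is Banach) is used to produce an $L^{\fai_q}$ representative, and then the non-tangential limit identifies $h$ with it. Your Fatou shortcut is legitimate, but it relies on the a.e.\ pointwise convergence, which your proposal does not actually establish (see the third gap above).
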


\begin{proof}
For $F\in \mathcal{H}_\fai(\rr^{n+1}_+)$ and all $(x,\,t)\in\rr^{n+1}_+$, let
\begin{eqnarray*}
F_1(x,\,t):=\chi_{\{(x,\,t)\in\rr^{n+1}_+:\ |F(x,\,t)|\ge 1\}}F(x,\,t)
\end{eqnarray*}
and
\begin{eqnarray*}
F_2(x,\,t):=\chi_{\{(x,\,t)\in\rr^{n+1}_+:\ |F(x,\,t)|< 1\}}F(x,\,t).
\end{eqnarray*}

Let $r\in(q(\fai),\,\fz)$ satisfy $q<\frac{i(\fai)}{r}$.
Then, by the lower type $qr$ property of $\fai(x,\cdot)$, we know that
\begin{eqnarray*}
\dsup_{t\in(0,\,\fz)}\lf\|\lf|F_1(\cdot,\,t)\r|^q\r\|^r_{L^r_{\fai(\cdot,\,1)}(\rn)}&&
=\dsup_{t\in(0,\,\fz)}\lf\{\dint_{\rn} \lf| F_1(x,\,t)\r|^{qr}\fai(x,\,1)\,dx\r\}\\
&&\le \dsup_{t\in(0,\,\fz)}\lf\{\dint_{\rn} \fai(x,\, |F_1(x,\,t)|)\,dx\r\}\\
&&\le \dsup_{t\in(0,\,\fz)}\lf\{\dint_{\rn} \fai(x,\, |F(x,\,t)|)\,dx\r\}<\fz.
\end{eqnarray*}
Thus, $\{|F_1(\cdot,\,t)|^q\}_{t>0}$ is uniformly bounded in $L^r_{\fai(\cdot,\,1)}(\rn)$,
which, together with the weak compactness of $L^r_{\fai(\cdot,\,1)}(\rn)$, implies that
there exist $\wz{h}_1\in L^r_{\fai(\cdot,\,1)}(\rn)$ and a subsequence
$\{|F_1(\cdot,\,t_k)|^q\}_{k\in\nn}$ such that $t_k\to 0^+$
and $\{|F_1(\cdot,\,t_k)|^q\}_{k\in\nn}$
converges weakly to $\wz{h}_1$ in $L^r_{\fai(\cdot,\,1)}(\rn)$ as $k\to \fz$, namely,
for any $g\in L^{r'}_{\fai(\cdot,\,1)}(\rn)$,
\begin{eqnarray}\label{eqn est1 lem MOHVBV}
\lim_{k\to \fz} \dint_\rn \lf|F_1(y,\,t_k)\r|^qg(y)\fai(y,\,1)\,dy= \dint_{\rn}
\wz{h}_1(y)g(y)\fai(y,\,1)\,dy.
\end{eqnarray}
Now, for all $y\in\rn$, let
\begin{eqnarray}\label{eqn est2 lem MOHVBV}
g(y):=\frac{P_t(x-y)}{\fai(y,\,1)},
\end{eqnarray}
where $P_t$ is the Poisson kernel as in \eqref{eqn PK}. By using the $B_{r'}(\rn)$-condition as in
\eqref{eqn Br condition} and $\fai(\cdot,\,1)\in A_r(\rn)$, we conclude that
\begin{eqnarray*}
\dint_{\rn}\lf|g(y)\r|^{r'}\fai(y,\,1)\,dy&&=\dint_{\rn} \lf[\frac{P_t(x-y)}
{\fai(y,\,1)}\r]^{r'}\fai(y,\,1)\,dy\\
&&\ls \dint_{\rn} \frac{1} {(t+|x-y|)^{nr'}} \lf[\fai(y,\,1)\r]^{-r'/r}\,dy\\
&&\ls \dint_{B(x,\,t)} \lf[\fai(y,\,1)\r]^{-r'/r}\,dy<\fz,
\end{eqnarray*}
which implies that $g\in L^{r'}_{\fai(\cdot,\,1)}(\rn)$.
Thus, from \eqref{eqn est1 lem MOHVBV}, we deduce that, for all $(x,\,t)\in\rr^{n+1}_+$,
\begin{eqnarray}\label{eqn est3 lem MOHVBV}
\lim_{k\to \fz} \lf(\lf|F_1(\cdot,\,t_k)\r|^q*P_t\r)(x)=\lf(\wz{h}_1*P_t\r)(x).
\end{eqnarray}

On the other hand, since $\sup_{(x,\,t)\in\rr^{n+1}_+}\lf|F_2(x,\,t)\r|^q\le 1$, we know
that  $\{|F_2(\cdot,\,t)|^q\}_{t>0}$ is uniformly bounded in $L^\fz_{\fai(\cdot,\,1)}(\rn)$.
Thus, there exist $\wz{h}_2\in L^\fz_{\fai(\cdot,\,1)}(\rn)$ with $\|\wz{h}_2\|_{L^\fz_{\fai(\cdot,\,1)}(\rn)}\le 1$ and a subsequence
$\{|F_2(\cdot,\,t_k)|^q\}_{k\in\nn}$ such that $t_k\to 0^+$ and
$\{|F_2(\cdot,\,t_k)|^q\}_{k\in\nn}$ converges $\ast$-weakly to $\wz{h}_2$ in
$L^\fz_{\fai(\cdot,\,1)}(\rn)$ as $k\to \fz$, namely,
for any $g\in L^{1}_{\fai(\cdot,\,1)}(\rn)$,
\begin{eqnarray}\label{eqn est1-1 lem MOHVBV}
\lim_{k\to \fz} \dint_\rn \lf|F_2(y,\,t_k)\r|^qg(y)\fai(y,\,1)\,dy= \dint_{\rn}
\wz{h}_2(y)g(y)\fai(y,\,1)\,dy.
\end{eqnarray}
Here, by abuse of notation, we use the same subscripts for the above
two different subsequences in our arguments.

Let $g$ be as in \eqref{eqn est2 lem MOHVBV}. It is easy to see that
$\int_{\rn}g(y)\fai(y,\,1)\,dy=1.$
Thus, by \eqref{eqn est1-1 lem MOHVBV}, we find that, for all $x\in\rn$,
\begin{eqnarray}\label{eqn est4 lem MOHVBV}
\lim_{k\to \fz} \lf(\lf|F_2(\cdot,\,t_k)\r|^q*P_t\r)(x)=\lf(\wz{h}_2*P_t\r)(x).
\end{eqnarray}

Now, let $\wz{h}:=\wz{h}_1+\wz{h}_2$.
Observe that, for all $k\in\nn$, $|\supp (F_1(\cdot,\,t_k))
\cap \supp (F_2(\cdot,\,t_k))|=0$,
which further implies that $|\supp(\wz{h}_1)\cap \supp(\wz{h}_2)|=0$. Moreover,
from \eqref{eqn est3 lem MOHVBV} and \eqref{eqn est4 lem MOHVBV}, it follows that, for
all $x\in\rn$,
\begin{eqnarray}\label{eqn est8 lem MOHVBV}
\lim_{k\to \fz} \lf(\lf|F(\cdot,\,t_k)\r|^q*P_t\r)(x)=\lf(\wz{h}*P_t\r)(x).
\end{eqnarray}
This, together with $t_k\to 0^+$ as $k\to \fz$ and
Lemma \ref{lem harmonic majorant of MOHVH}, shows that,
for all $(x,\,t)\in\rr^{n+1}_+$,
\begin{eqnarray*}
\lf|F(x,\,t)\r|^q=\lim_{k\to \fz}\lf|F(x,\,t+t_k)\r|^q
\le \lim_{k\to\fz}\lf(\lf|F(\cdot,\,t_k)\r|^q*P_t\r)(x)=
\lf(\wz{h}*P_t\r)(x),
\end{eqnarray*}
which proves \eqref{eqn harmoic marjant BV} by taking $h:=\wz{h}^{1/q}$.

Now, we prove that $\wz{h}$ is the non-tangential limit of $|F(\cdot,\,t_k)|^q$.
Using \eqref{eqn est3 lem MOHVBV} and \eqref{eqn est4 lem MOHVBV}, we conclude that,
for all $x\in\rn$,
\begin{eqnarray}\label{eqn est5 lem MOHVBV}
\quad\quad|F|^*(x)&&\ls\lf[\dsup_{|y-x|<t,\,t\in(0,\,\fz)}
\lf(\wz{h}_1*P_t\r)(y)\r]^{1/q} +\lf[\dsup_{|y-x|<t,\,t\in(0,\,\fz)}
\lf(\wz{h}_2*P_t\r)(y)\r]^{1/q} \\
&&\nonumber=:\mathrm{I}+\mathrm{II}.
\end{eqnarray}

To estimate $\mathrm{II}$, from the fact that
$\|\wz{h}_2\|_{L^\fz_{\fai(\cdot,\,1)}(\rn)}\le 1$ and \eqref{eqn db property},
we deduce that $\|\wz{h}_2\|_{L^\fz(\rn)}\le 1$. This implies that
\begin{eqnarray}\label{eqn est6 lem MOHVBV}
\mathrm{II}\ls \lf[\dsup_{|y-x|<t,\,t\in(0,\,\fz)}\dint_{\rn} P_t(y)\,dy\r]^{1/q}\ls 1.
\end{eqnarray}

For $\mathrm{I}$, it is easy to see that
\begin{eqnarray}\label{eqn est7 lem MOHVBV}
\mathrm{I}\sim
\lf[\lf(\wz{h}_1\r)_{P}^*(x)\r]^{1/q}.
\end{eqnarray}
Moreover, by the fact that $\fai(\cdot,\,1)\in A_r(\rn)$, $r\in (q(\fai),\,\fz)$ and the
boundedness of the Hardy-Littlewood maximal function $\mathcal{M}$
on $L^r_{\fai(\cdot,\,1)}(\rn)$ (see, for example, \cite[Theorem 9.1.9]{Gr09}),
we conclude that
\begin{eqnarray*}
\dint_{\rn}\lf[\lf(\wz{h}_1\r)_{P}^*(x)\r]^r\fai(x,\,1)\,dx&&\ls
\dint_{\rn}\lf[\mathcal{M}\lf(\wz{h}_1\r)(x)\r]^r\fai(x,\,1)\,dx\\
&&\ls\dint_{\rn}\lf[\wz{h}_1(x)\r]^r\fai(x,\,1)\,dx<\fz,
\end{eqnarray*}
which, together with \eqref{eqn est5 lem MOHVBV},
\eqref{eqn est6 lem MOHVBV} and \eqref{eqn est7 lem MOHVBV},
implies that, for almost every $x\in\rn$, $|F|^*(x)<\fz$.
From the fact that each coordinate
function of $F$ is harmonic on $\rr^{n+1}_+$ and Fatou's theorem
(see \cite[p.\,47]{St70}), we deduce that $F(x,\,t)$ has a non-tangential limit
as $t\to 0^+$, which, combined with the uniqueness of the limit, implies that
$\wz{h}$ is the non-tangential limit of $|F(\cdot,\,t)|^q$ as $t\to 0^+$.

Now, we show that $h\in L^\fai(\rn)$ by using some properties of convex
Musielak-Orlicz spaces from \cite{DHHR11}. For all $(x,\,t)\in \rr^{n+1}_+$, let
\begin{eqnarray}\label{2.y2}
\fai_q(x,\,t):=\fai(x,\,t^{1/q}).
\end{eqnarray}
By an elementary calculation, we see that
\begin{eqnarray}\label{eqn scaling of MOF}
\lf\|\lf|F(\cdot,\,t)\r|\r\|_{L^\fai(\rn)}=
\lf\|\lf|F(\cdot,\,t)\r|^q\r\|_{L^{\fai_q}(\rn)}^{1/q},
\end{eqnarray}
which, together with the fact that $F\in \mathcal{H}_\fai(\rr^{n+1}_+)$,
implies that $\{|F(\cdot,\,t)|^q\}_{t\in(0,\,\fz)}$ is uniformly bounded in $L^{\fai_q}(\rn)$.

Moreover, using the fact that $i(\fai_q)=\frac{i(\fai)}{q}>q(\fai)\ge1$ and
\begin{eqnarray*}
\fai_q(x,\,t)\sim \dint_{0}^t \frac{\fai_q(x,\,s)}{s}\,ds,
\end{eqnarray*}
we conclude that $\wz{\fai}_q(x,\,s):=\fai_q(x,\,s)/s$ satisfies the following properties:
\begin{itemize}
\vspace{-0.25cm}
\item[(i)] $\lim_{s\to 0^+} \wz{\fai}_q(x,\,s)=0$, $\lim_{t\to \fz}\wz{\fai}_q(x,\,s)=\fz$
and, when $s\in(0,\,\fz)$, $\wz{\fai}_q(x,\,s)>0$;
\vspace{-0.25cm}
\item[(ii)] for all $x\in\rn$, $\wz{\fai}_q(x,\,\cdot)$ is decreasing;
\vspace{-0.25cm}
\item[(iii)] for all $x\in\rn$, $\wz{\fai}_q(x,\,\cdot)$ is right continuous.
\vspace{-0.25cm}
\end{itemize}
Thus, from \cite[p.\,262]{Ad03}, we deduce that, for all $x\in\rn$,  $\fai_q(x,\,\cdot)$ is
equivalent to an $N$-function (see \cite{Ad03} for the definition of $N$-functions).
Hence, by \cite[p.\,38, Theorem 2.3.13]{DHHR11}, we know  that $L^{\fai_q}(\rn)$ is a
Banach space.

For all $(x,\,t)\in\rr^{n+1}_+$, let
\begin{eqnarray*}
\fai_q^*(x,\,t):=\dsup_{s\in(0,\,\fz)}\lf\{st-\fai_q(x,\,s)\r\}.
\end{eqnarray*}
It follows, from \cite[p.\,59]{DHHR11}, that $L^{\fai_q}(\rn)\subset
\lf(L^{\fai_q^*}(\rn)\r)^*$. Thus, by Alaoglu's theorem, we obtain the
$\ast$-weak compactness of $L^{\fai_q}(\rn)$, which, together with the fact
$\{|F(\cdot,\,t)|^q\}_{t\in(0,\,\fz)}$ is uniformly bounded in $L^{\fai_q}(\rn)$, implies
that there exist $g\in L^{\fai_q}(\rn)$ and a subsequence $\{|F(\cdot,\,t_k)|^q\}_{k\in\nn}$
such that $t_k\to 0^+$ and $\{|F(\cdot,\,t_k)|^q\}_{k\in\nn}$ converges $\ast$-weakly
to $g$ in $ L^{\fai_q}(\rn)$ as $k\to \fz$.
Moreover,  from the uniqueness of the limit, we deduce that, for almost every $x\in\rn$,
$h(x)=[g(x)]^{1/q}$. Thus, $h\in L^\fai(\rn)$.

The formula, $\lim_{t\to 0^+}|F(\cdot,\,t)|=h(\cdot)$ in $L^\fai(\rn)$, follows immediately
from the facts that $h$ is the non-tangential limit of $F$ as $t\to 0^+$ almost everywhere,
$F\in \mathcal{H}_\fai(\rr^{n+1}_+)$, $h\in L^\fai(\rn)$ and the dominated convergence theorem.
This finishes the proof of Lemma \ref{lem boudnary value of MOHVH}.
\end{proof}

\begin{remark}\label{rem range of q}
We point out that, in the proofs of Lemmas \ref{lem harmonic majorant of MOHVH} and
\ref{lem boudnary value of MOHVH}, we used the condition  $\frac{i(\fai)}{q(\fai)}
> \frac{n-1}{n}$ merely because we need the fact that $|F|^q$ is subharmonic
on $\rr^{n+1}_+$ for $q\in[\frac{n-1}{n},\,\fz)$. Thus, if there exists $q$,
whose size is strictly less than $\frac{n-1}{n}$,
such that $|F|^q$ is subharmonic on $\rr^{n+1}_+$, then, for all
$\frac{i(\fai)}{q(\fai)}>q$, the conclusions of Lemmas
\ref{lem harmonic majorant of MOHVH} and \ref{lem boudnary value of MOHVH}
still hold true. Moreover, if, for all $q\in(0,\,\fz)$,
$|F|^q$ is subharmonic on $\rr^{n+1}_+$, then, for every Musielak-Orlicz function $\fai$
satisfying Assumption $(\fai)$, by taking $q$ sufficiently small, we see that
$\frac{i(\fai)}{q(\fai)}>q$ always holds true. Thus, in this case,
Lemmas \ref{lem harmonic majorant of MOHVH} and
\ref{lem boudnary value of MOHVH} hold true for every Musielak-Orlicz function
satisfying Assumption $(\fai)$.
\end{remark}

With these preparations, we now turn to the study of the relation between
$\mathcal{H}_\fai(\rr^{n+1}_+)$ and $H_\fai(\rr^{n+1}_+)$.

\begin{proposition}\label{pro HVC}
Let $\fai$ satisfy Assumption $(\fai)$ with
$\frac{i(\fai)}{q(\fai)}>\frac{n-1}{n}$ and
$$F:=\{u_0,\,u_1,\,\ldots,\,u_n\}\in
\mathcal{H}_{\fai}(\rr^{n+1}_+),$$
where $i(\fai)$ and $q(\fai)$ are as in \eqref{1.6}
and \eqref{1.7}, respectively. Then there exists a harmonic function
$u:= u_0\in H_\fai(\rr^{n+1}_+)$  such that
\begin{eqnarray}\label{eqn est2 pro HVtoHF}
\lf\|u\r\|_{H_\fai(\rr^{n+1}_+)}\le C \lf\|F\r\|_{\mathcal{H}_\fai(\rr^{n+1}_+)},
\end{eqnarray}
where $C$ is a positive constant independent of $u$ and $F$.
\end{proposition}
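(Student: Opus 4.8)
The plan is to take $u:=u_0$, which is automatically harmonic on $\rr^{n+1}_+$ since it is a coordinate function of the harmonic vector $F$, and to dominate its non-tangential maximal function $u^*$ by a power of the Poisson maximal function of the boundary value of $|F|$ supplied by Lemma \ref{lem boudnary value of MOHVH}. First I would fix some $q\in[\frac{n-1}{n},\,\frac{i(\fai)}{q(\fai)})$, which is possible because $\frac{i(\fai)}{q(\fai)}>\frac{n-1}{n}$; for this $q$ both Lemma \ref{lem harmonic majorant of MOHVH} and Lemma \ref{lem boudnary value of MOHVH} apply to $F$. By Lemma \ref{lem boudnary value of MOHVH} there is $h\in L^\fai(\rn)$ with $|F(\cdot,\,t)|\to h$ in $L^\fai(\rn)$ as $t\to0^+$ and, for all $(x,\,t)\in\rr^{n+1}_+$, $|F(x,\,t)|\le[(h^q*P_t)(x)]^{1/q}$. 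Since $|u_0(x,\,t)|\le|F(x,\,t)|$ for all $(x,\,t)$, taking the supremum over the cone $\{(y,\,t)\in\rr^{n+1}_+:\ |y-x|<t\}$ yields $u^*(x)\le[(h^q)^*_P(x)]^{1/q}$, where $(h^q)^*_P$ denotes the non-tangential maximal function of the Poisson integral of $h^q$.

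The next step is to estimate $(h^q)^*_P$ in a suitable Musielak-Orlicz norm. With $\fai_q(x,\,t):=\fai(x,\,t^{1/q})$ as in \eqref{2.y2}, the scaling identity \eqref{eqn scaling of MOF} (equivalently $\|g^{1/q}\|_{L^\fai(\rn)}=\|g\|^{1/q}_{L^{\fai_q}(\rn)}$, which in particular gives $h^q\in L^{\fai_q}(\rn)$) reduces the desired bound $\|u^*\|_{L^\fai(\rn)}\ls\|h\|_{L^\fai(\rn)}$ to proving $\|(h^q)^*_P\|_{L^{\fai_q}(\rn)}\ls\|h^q\|_{L^{\fai_q}(\rn)}$. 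I would obtain the latter by combining the classical pointwise bound $(h^q)^*_P(x)\ls\mathcal{M}(h^q)(x)$ — the Poisson kernel being controlled by radially decreasing $L^1$-dilations — with the boundedness of the Hardy--Littlewood maximal operator $\mathcal{M}$ on $L^{\fai_q}(\rn)$. This boundedness is available because the critical indices of $\fai_q$ satisfy $q(\fai_q)=q(\fai)$ (dilating the second variable does not change the Muckenhoupt class of $\fai(\cdot,\,s)$) and $i(\fai_q)=i(\fai)/q>q(\fai)\ge1$, so $\fai_q$ has lower type exponent strictly larger than $1$ and $q(\fai_q)<i(\fai_q)$; as observed in the proof of Lemma \ref{lem boudnary value of MOHVH}, $\fai_q(x,\,\cdot)$ is then equivalent to an $N$-function, so $L^{\fai_q}(\rn)$ fits the framework of Lemma \ref{lem bd HLMF} in this reflexive range.

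Collecting the estimates,
\begin{eqnarray*}
\|u\|_{H_\fai(\rr^{n+1}_+)}&=&\|u^*\|_{L^\fai(\rn)}\le\lf\|\lf[(h^q)^*_P\r]^{1/q}\r\|_{L^\fai(\rn)}\\
&=&\lf\|(h^q)^*_P\r\|^{1/q}_{L^{\fai_q}(\rn)}\ls\lf\|h^q\r\|^{1/q}_{L^{\fai_q}(\rn)}=\|h\|_{L^\fai(\rn)},
\end{eqnarray*}
and since $|F(\cdot,\,t)|\to h$ in $L^\fai(\rn)$ we get $\|h\|_{L^\fai(\rn)}=\lim_{t\to0^+}\lf\||F(\cdot,\,t)|\r\|_{L^\fai(\rn)}\le\sup_{t\in(0,\,\fz)}\lf\||F(\cdot,\,t)|\r\|_{L^\fai(\rn)}=\|F\|_{\mathcal{H}_\fai(\rr^{n+1}_+)}$. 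Together with the harmonicity of $u$ and $u^*\in L^\fai(\rn)$, Definition \ref{def MOHH space} then yields $u\in H_\fai(\rr^{n+1}_+)$ and \eqref{eqn est2 pro HVtoHF}. Given Lemmas \ref{lem harmonic majorant of MOHVH} and \ref{lem boudnary value of MOHVH}, I expect the one genuinely delicate point to be the boundedness of $\mathcal{M}$ (equivalently, of the non-tangential Poisson maximal operator) on $L^{\fai_q}(\rn)$ for $q<i(\fai)/q(\fai)$, i.e.\ the careful bookkeeping of the critical indices of $\fai_q$; the remainder is a routine assembly of the two lemmas and the scaling identity \eqref{eqn scaling of MOF}.
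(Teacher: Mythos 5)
Your proposal is correct and takes essentially the same route as the paper: both pass through the harmonic-majorant inequality from Lemmas \ref{lem harmonic majorant of MOHVH} and \ref{lem boudnary value of MOHVH}, dominate the non-tangential maximal function of $|F|^q$ by $\mathcal{M}$ applied to the boundary value, and then transfer to $L^\fai$ via the scaling $\fai_q$ together with Lemma \ref{lem bd HLMF}. The only cosmetic difference is that you phrase the final step in terms of Luxembourg norms whereas the paper works directly with the modular $\int\fai(x,\cdot/\lz)\,dx$; these are interchangeable here.
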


\begin{proof}
Let $F\in \mathcal{H}_\fai(\rr^{n+1}_+)$. By Lemmas \ref{lem harmonic majorant of MOHVH}
and \ref{lem boudnary value of MOHVH}, we see that $|F|$ has the non-tangential limit
$F(\cdot,\,0)$. Moreover, for all $(x,\,t)\in\rr^{n+1}_+$,
 \begin{eqnarray}\label{eqn est1 pro HVtoHF}
\lf|F(x,\,t)\r|^q \le \lf(\lf|F(\cdot,\,0)\r|^q*P_t\r)(x)\ls \mathcal{M}\lf(\lf|
F(\cdot,\,0)\r|^q\r)(x),
\end{eqnarray}
where $q\in[\frac{n-1}{n},\, \frac{i(\fai)}{q(\fai)})$ is as in Lemma
\ref{lem boudnary value of MOHVH} and $\mathcal{M}$ denotes
the Hardy-Littlewood maximal function.
Let $u:=u_0$ and $\fai_q$ be as in \eqref{2.y2}.
For all $\lz\in(0,\,\fz)$, from \eqref{eqn est1 pro HVtoHF}, the fact
that $q(\fai)<\frac{i(\fai)}{q}$ and Lemma \ref{lem bd HLMF}, it follows that
\begin{eqnarray*}
\dint_{\rn}\fai\lf(x,\,\frac{u^*(x)}{\lz}\r)\,dx&&\le
\dint_{\rn}\fai_q\lf(x,\,\frac{(|F|^q)^*(x)}{\lz^q}\r)\,dx \\
&&\ls \dint_{\rn}\fai_q\lf(x,\,\frac{(\mathcal{M}(|F(\cdot,\,0)|^q))^*(x)}{\lz^q}\r)\,dx\\
&&\ls \dint_{\rn}\fai_q\lf(x,\,\frac{|F(x,\,0)|^q}{\lz^q}\r)\,dx\\
&&\sim \dint_{\rn}\fai\lf(x,\,\frac{|F(x,\,0)|}{\lz}\r)\,dx \ls \sup_{t\in(0,\,\fz)}
\dint_{\rn}\fai\lf(x,\,\frac{|F(x,\,t)|}{\lz}\r)\,dx,
\end{eqnarray*}
which immediately implies \eqref{eqn est2 pro HVtoHF} and
hence completes the proof of Proposition \ref{pro HVC}.
\end{proof}

Proposition \ref{pro HVC} immediately implies the following conclusion,
the details being omitted.

\begin{corollary}\label{cor HVC}
Let $\fai$ satisfy Assumption $(\fai)$ with
$\frac{i(\fai)}{q(\fai)}>\frac{n-1}{n}$ and
$$F:=\{u_0,\,u_1,\,\ldots,\,u_n\}\in
\mathcal{H}_{\fai,\,2}(\rr^{n+1}_+),$$
where $i(\fai)$ and $q(\fai)$ are as in \eqref{1.6}
and \eqref{1.7}, respectively. Then there exists a harmonic function
$u:= u_0\in H_{\fai,\,2}(\rr^{n+1}_+)$  such that
\begin{eqnarray*}
\lf\|u\r\|_{H_\fai(\rr^{n+1}_+)}\le C \lf\|F\r\|_{\mathcal{H}_\fai(\rr^{n+1}_+)},
\end{eqnarray*}
where $C$ is a positive constant independent of $u$ and $F$.
\end{corollary}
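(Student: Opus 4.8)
The plan is to deduce Corollary \ref{cor HVC} directly from Proposition \ref{pro HVC} by passing through the two-space approximation built into the definitions of $\mathcal{H}_{\fai,\,2}(\rr^{n+1}_+)$ and $H_{\fai,\,2}(\rr^{n+1}_+)$. First I would take $F\in \mathcal{H}_{\fai,\,2}(\rr^{n+1}_+)$; by Definition \ref{def MOHVH space}, $F$ is the limit, in the quasi-norm $\|\cdot\|_{\mathcal{H}_\fai(\rr^{n+1}_+)}$, of a sequence $\{F^{(k)}\}_{k\in\nn}\subset \mathcal{H}_\fai(\rr^{n+1}_+)\cap \mathcal{H}^2(\rr^{n+1}_+)$. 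Write $F^{(k)}=\{u_0^{(k)},\,u_1^{(k)},\,\ldots,\,u_n^{(k)}\}$ and set $u^{(k)}:=u_0^{(k)}$. Applying Proposition \ref{pro HVC} to each $F^{(k)}$ gives harmonic functions $u^{(k)}\in H_\fai(\rr^{n+1}_+)$ with $\|u^{(k)}\|_{H_\fai(\rr^{n+1}_+)}\le C\|F^{(k)}\|_{\mathcal{H}_\fai(\rr^{n+1}_+)}$, the constant $C$ being independent of $k$.

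Next I would check that $\{u^{(k)}\}_{k\in\nn}$ is a Cauchy sequence in $H_\fai(\rr^{n+1}_+)$. Since $F^{(k)}-F^{(\ell)}$ is again a harmonic vector satisfying the generalized Cauchy-Riemann equation \eqref{eqn GCR equation} and lies in $\mathcal{H}_\fai(\rr^{n+1}_+)\cap\mathcal{H}^2(\rr^{n+1}_+)$, Proposition \ref{pro HVC} applies to it and yields
\begin{eqnarray*}
\lf\|u^{(k)}-u^{(\ell)}\r\|_{H_\fai(\rr^{n+1}_+)}\le C\lf\|F^{(k)}-F^{(\ell)}\r\|_{\mathcal{H}_\fai(\rr^{n+1}_+)}\to 0
\end{eqnarray*}
as $k,\,\ell\to\fz$, because $\{F^{(k)}\}_{k\in\nn}$ converges, hence is Cauchy, in $\mathcal{H}_\fai(\rr^{n+1}_+)$. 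Here one uses the scaling relation \eqref{eqn scaling of MOF} (with $\fai_q$ as in \eqref{2.y2}) together with the fact that, for $p\in(0,\,1]$, the quasi-norm is $p$-subadditive, so that Cauchyness is genuinely transferred; this is the only mildly technical point. Each $u^{(k)}$, being the zeroth component of a member of $\mathcal{H}^2(\rr^{n+1}_+)$, also lies in $H^2(\rr^{n+1}_+)$, so $\{u^{(k)}\}_{k\in\nn}\subset H_\fai(\rr^{n+1}_+)\cap H^2(\rr^{n+1}_+)$.

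Then I would let $u$ be the limit of $\{u^{(k)}\}_{k\in\nn}$ in $H_\fai(\rr^{n+1}_+)$; by Definition \ref{def MOHH space p>1} this limit belongs to $H_{\fai,\,2}(\rr^{n+1}_+)$. Moreover $u$ is harmonic on $\rr^{n+1}_+$, since harmonicity is preserved under local uniform limits and convergence in $H_\fai(\rr^{n+1}_+)$ forces local uniform convergence on compact subsets of $\rr^{n+1}_+$ (the non-tangential maximal function controls pointwise values). One should also verify that $u=u_0$, the zeroth component of $F$: this follows because convergence of $F^{(k)}$ to $F$ in $\mathcal{H}_\fai(\rr^{n+1}_+)$ implies $u_0^{(k)}\to u_0$ locally uniformly, and limits are unique. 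Finally, passing to the limit in the inequalities $\|u^{(k)}\|_{H_\fai(\rr^{n+1}_+)}\le C\|F^{(k)}\|_{\mathcal{H}_\fai(\rr^{n+1}_+)}$, using continuity of the quasi-norms, gives $\|u\|_{H_\fai(\rr^{n+1}_+)}\le C\|F\|_{\mathcal{H}_\fai(\rr^{n+1}_+)}$, which is the desired estimate.

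The main obstacle, such as it is, is the bookkeeping around the $p$-quasi-norm: one must be a little careful that the completion procedure is consistent and that the constant $C$ from Proposition \ref{pro HVC} does not degrade when passing to differences and then to limits. Since $\fai(x,\,\cdot)$ has lower type $p\in(0,\,1]$, the functional $\|\cdot\|_{L^\fai(\rn)}$ satisfies $\|f+g\|_{L^\fai(\rn)}^p\le\|f\|_{L^\fai(\rn)}^p+\|g\|_{L^\fai(\rn)}^p$, and the same for $\|\cdot\|_{H_\fai(\rr^{n+1}_+)}$ and $\|\cdot\|_{\mathcal{H}_\fai(\rr^{n+1}_+)}$; feeding this into the chain above makes everything routine. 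No new ideas beyond Proposition \ref{pro HVC} are needed, which is exactly why the details are omitted in the paper.
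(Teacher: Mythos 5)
Your argument is correct and is essentially the proof the paper has in mind when it says the corollary follows immediately from Proposition \ref{pro HVC}: apply Proposition \ref{pro HVC} to the approximating sequence $\{F^{(k)}\}\subset \mathcal{H}_\fai(\rr^{n+1}_+)\cap\mathcal{H}^2(\rr^{n+1}_+)$ and to the differences $F^{(k)}-F^{(\ell)}$, deduce that the zeroth components $\{u^{(k)}_0\}\subset H_\fai(\rr^{n+1}_+)\cap H^2(\rr^{n+1}_+)$ form a Cauchy sequence, and pass to the limit. One very minor remark: the appeal to \eqref{eqn scaling of MOF} and to exact $p$-subadditivity of $\|\cdot\|_{L^\fai(\rn)}$ is not really needed (nor is the $p$-subadditivity quite standard in the stated form for a general Musielak-Orlicz function); all that is required is some quasi-triangle inequality for $\|\cdot\|_{L^\fai(\rn)}$, which is available, so the conclusion is unaffected.
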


Furthermore, we have the following relation between $H_\fai(\rn)$ and
$\mathcal{H}_{\fai,\,2}(\rr^{n+1}_+)$, which also implies that $H_\fai(\rn)$
consists of the boundary values of real parts of
$\mathcal{H}_{\fai,\,2}(\rr^{n+1}_+)$.

\begin{proposition}\label{pro MOHtoMOHVH}
Let $\fai$ satisfy Assumption $(\fai)$ and $f\in H_\fai(\rn)$.
Then there exists $F:=\{u_0,\,u_1,\,\ldots,\,u_n\}\in \mathcal{H}_{\fai,\,2}
(\rr^{n+1}_+)$ such that $F$ satisfies the generalized Cauchy-Riemann
equation \eqref{eqn GCR equation} and that, for all
$(x,\,t)\in\rr^{n+1}_+$, $u_0(x,\,t):=(f*P_t)(x)$,
where $P_t$ is the Poisson kernel as in \eqref{eqn PK}. Moreover,
\begin{eqnarray}\label{eqn pro MOHtoMOHVH}
\lf\|F\r\|_{\mathcal{H}_\fai(\rr^{n+1}_+)}\le C \|f\|_{H_\fai(\rn)},
\end{eqnarray}
where $C$ is a positive constant independent of $f$ and $F$.
\end{proposition}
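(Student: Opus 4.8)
The plan is to prove the claim first for $f$ in the dense subspace $H_\fai(\rn)\cap L^2(\rn)$, via the Poisson integral characterization (Proposition \ref{pro PIC}) together with the boundedness of the Riesz transforms on $H_\fai(\rn)$ (Corollary \ref{cor bd of Riesz}), and then to pass to a general $f\in H_\fai(\rn)$ by a completion argument. For $f\in H_\fai(\rn)\cap L^2(\rn)$, define, for all $(x,\,t)\in\rr^{n+1}_+$,
\[
u_0(x,\,t):=(f*P_t)(x),\qquad u_j(x,\,t):=\lf(R_j(f)*P_t\r)(x)\quad (j\in\{1,\,\ldots,\,n\}),
\]
and set $F:=\{u_0,\,u_1,\,\ldots,\,u_n\}$. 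Since $f\in L^2(\rn)$, each $R_j(f)\in L^2(\rn)$ and $\sum_{j=1}^n\|R_j(f)\|_{L^2(\rn)}^2=\|f\|_{L^2(\rn)}^2$; since also $\|g*P_t\|_{L^2(\rn)}\le\|g\|_{L^2(\rn)}$, we get $\sup_{t\in(0,\,\fz)}\lf\||F(\cdot,\,t)|\r\|_{L^2(\rn)}<\fz$, that is, $F\in\mathcal{H}^2(\rr^{n+1}_+)$. Moreover, writing $u_j=f*Q_t^{(j)}$ with $Q^{(j)}$ the $j$-th conjugate Poisson kernel, the classical computation for conjugate harmonic systems (see, for example, \cite{SW60,St70}) shows that $F$ is a harmonic vector satisfying the generalized Cauchy-Riemann equation \eqref{eqn GCR equation}, and $u_0(x,\,t)=(f*P_t)(x)$ by construction.

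The core of the argument is the bound $\lf\|F\r\|_{\mathcal{H}_\fai(\rr^{n+1}_+)}\ls\|f\|_{H_\fai(\rn)}$ for such $f$. For every $(x,\,t)\in\rr^{n+1}_+$, taking $y=x$ in the definitions of $f_P^*$ and $(R_j(f))_P^*$ of Proposition \ref{pro PIC} gives $|u_0(x,\,t)|\le f_P^*(x)$ and $|u_j(x,\,t)|\le(R_j(f))_P^*(x)$, so that
\[
\sup_{t\in(0,\,\fz)}|F(x,\,t)|\le f_P^*(x)+\sum_{j=1}^n(R_j(f))_P^*(x)\qquad(x\in\rn).
\]
As $f$ and each $R_j(f)$ lie in $L^2(\rn)$, they are bounded distributions, so Proposition \ref{pro PIC} applies to each of them; combining it with the $H_\fai(\rn)$-boundedness of the Riesz transforms (Corollary \ref{cor bd of Riesz}) and the quasi-triangle inequality of $\|\cdot\|_{L^\fai(\rn)}$, I obtain
\[
\sup_{t\in(0,\,\fz)}\lf\||F(\cdot,\,t)|\r\|_{L^\fai(\rn)}\ls\lf\|f_P^*\r\|_{L^\fai(\rn)}+\sum_{j=1}^n\lf\|(R_j(f))_P^*\r\|_{L^\fai(\rn)}\ls\|f\|_{H_\fai(\rn)}.
\]
Therefore $F\in\mathcal{H}_\fai(\rr^{n+1}_+)\cap\mathcal{H}^2(\rr^{n+1}_+)\subset\mathcal{H}_{\fai,\,2}(\rr^{n+1}_+)$, and \eqref{eqn pro MOHtoMOHVH} holds for $f\in H_\fai(\rn)\cap L^2(\rn)$.

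For an arbitrary $f\in H_\fai(\rn)$, I would use Remark \ref{rem defMOH}(iii) to select $\{f_k\}_{k\in\nn}\subset H_\fai(\rn)\cap L^2(\rn)$ with $f_k\to f$ in $H_\fai(\rn)$, and let $F_k$ be the harmonic vector constructed above from $f_k$. Applying the previous step to $f_k-f_l$ shows $\{F_k\}_{k\in\nn}$ is Cauchy in $\|\cdot\|_{\mathcal{H}_\fai(\rr^{n+1}_+)}$, hence converges to some $F\in\mathcal{H}_{\fai,\,2}(\rr^{n+1}_+)$; since the $F_k$ are harmonic vectors and convergence in this quasi-norm forces local uniform convergence on $\rr^{n+1}_+$, the limit $F$ is a harmonic vector satisfying \eqref{eqn GCR equation} whose zeroth component equals $\lim_{k\to\fz}f_k*P_t=f*P_t$ in the sense of Remark \ref{rem PIC}, and \eqref{eqn pro MOHtoMOHVH} follows from the estimate for the $f_k$ together with Fatou's lemma. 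The main obstacle is really packaged into Corollary \ref{cor bd of Riesz}: once the $H_\fai(\rn)$-boundedness of the Riesz transforms (obtained through the interpolation Proposition \ref{pro interpolation}) is available, the rest is a routine combination of Proposition \ref{pro PIC} with the classical conjugate harmonic system, although some care is still needed to check that the limiting object genuinely belongs to the completion $\mathcal{H}_{\fai,\,2}(\rr^{n+1}_+)$ and that its boundary behaviour is compatible with the definition of $f*P_t$ in Remark \ref{rem PIC}.
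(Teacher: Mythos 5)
Your overall route matches the paper's: build the conjugate harmonic system from the (conjugate) Poisson integrals of $f$ and $R_j(f)$, check the generalized Cauchy--Riemann equation via the classical Stein--Weiss / Fourier transform computation, use Proposition~\ref{pro PIC} together with Corollary~\ref{cor bd of Riesz} (with care that its proof does not rely on this proposition, as the paper explicitly notes) to get the quasi-norm bound on the dense subset $H_\fai(\rn)\cap L^2(\rn)$, and then pass to the limit along an approximating sequence $\{f_k\}\subset H_\fai(\rn)\cap L^2(\rn)$.

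The one genuine soft spot is the sentence ``convergence in this quasi-norm forces local uniform convergence on $\rr^{n+1}_+$.'' That assertion is not something that follows formally from the definition of $\|\cdot\|_{\mathcal{H}_\fai(\rr^{n+1}_+)}$ and the mean-value property: $L^\fai(\rn)$ is only a Musielak--Orlicz quasi-normed space (typically not locally embedded in $L^1$ when the lower type $p$ of $\fai$ is $<1$), so bounding $|F_k(x_0,t_0)-F_l(x_0,t_0)|$ via a solid mean over a ball does not obviously close up to an estimate against $\sup_t\||F_k(\cdot,t)-F_l(\cdot,t)|\|_{L^\fai(\rn)}$. Yet you do need some form of locally uniform convergence (in fact $C^1_{\mathrm{loc}}$ convergence, which for harmonic functions follows from locally uniform convergence) in order to conclude that the abstract completion limit is a genuine harmonic vector satisfying \eqref{eqn GCR equation} and that its zeroth component really is $f*P_t$. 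This is exactly where the paper does its nontrivial work, and it does so by a slightly different and stronger route than you invoke: rather than arguing from $\mathcal{H}_\fai$-Cauchy-ness, the paper uses that $f_k\to f$ in $H_\fai(\rn)$ itself, deduces from the upper-type property that $\mathcal{M}_\phi^*(f_k-f)\to 0$ in the measure $\fai(\cdot,1)\,dx$, and then on a fixed ball $B(x,t/4)$ averages the pointwise bound $|([f_k-f]*\phi_{\widetilde t})(z)|\le\mathcal{M}_\phi^*(f_k-f)(y)$ over the ``good'' set $E_k$ (where the nontangential maximal function is $<1$), using the upper type $1$ inequality $s\,\fai(y,1)\lesssim\fai(y,s)$ for $s\le 1$ and the fact that $\fai(E_k,1)\ge\tfrac12\,\fai(B(x,t/4),1)$ for large $k$. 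This produces the required uniform convergence on compact subsets of $\rr^{n+1}_+$ without ever needing an $L^\fai\Rightarrow L^1_{\mathrm{loc}}$ embedding. You already acknowledge at the end that ``some care is still needed,'' which is the right instinct; the missing content is precisely this convergence-in-measure argument, and I would not describe it as routine.

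A minor additional remark: your inequality
\[
\sup_{t\in(0,\,\fz)}\lf\||F(\cdot,\,t)|\r\|_{L^\fai(\rn)}\lesssim\lf\|f_P^*\r\|_{L^\fai(\rn)}+\sum_{j=1}^n\lf\|(R_j(f))_P^*\r\|_{L^\fai(\rn)}
\]
is fine, and parallels \eqref{eqn est1 pro MOHtoMOHVH} in the paper, provided one first notes $|F(\cdot,t)|\le\sum_{j=0}^n|u_j(\cdot,t)|$ and then applies the $L^\fai$ quasi-triangle inequality; this is the same content as the paper's display
\[
\dsup_{t\in(0,\,\fz)}\lf\|\lf|u_j^{k}(\cdot,\,t)\r|\r\|_{L^\fai(\rn)}\lesssim \|R_j(f_k)\|_{H_\fai(\rn)}\lesssim\|f_k\|_{H_\fai(\rn)}\lesssim\|f\|_{H_\fai(\rn)},
\]
so no issue there.
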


\begin{proof}
Let $f\in H_\fai(\rn)$. By Remark \ref{rem defMOH}(iii),
we see that $L^2(\rn)\cap H_\fai(\rn)$ is dense in $H_\fai(\rn)$.
Thus,  there exists a sequence $\{f_k\}_{k\in\nn}\subset (L^2(\rn)\cap H_\fai(\rn))$
such that $\lim_{k\to \fz} f_k =f$ in $H_\fai(\rn)$ and hence in $\mathcal{S}'(\rn)$.

For any $k\in\nn$, $j\in\{1,\,\ldots,\,n\}$ and $(x,\,t)\in\rr^{n+1}_+$,
let $u^k_0(x,\,t):=(f_k*P_t)(x)$ and $u^k_j(x,\,t):=(f_k*Q^{(j)}_t)(x)$,
where  $P_t$ is the Poisson kernel as in \eqref{eqn PK}
and $Q_t^{(j)}$ the $j$-\emph{th conjugate Poisson kernel} defined by setting,
for all $x\in\rn$,
\begin{eqnarray}\label{eqn CPK}
Q_t^{(j)}(x):=C_{(n)} \frac{x_j}{(t^2+|x|^2)^{\frac{n+1}{2}}},
\end{eqnarray}
where $C_{(n)}$ is as in \eqref{eqn RZ}.

Since $f_k\in L^2(\rn)$, we deduce, from \cite[p.\,236, Theorem 4.17]{SW71}, that the
harmonic vector $F_k:=\{u^k_0,\,u^k_1,\,\ldots,\,u^k_n\}\in
\mathcal{H}^2(\rr^{n+1}_+)$ and satisfies the generalized
Cauchy-Riemann equation \eqref{eqn GCR equation}. Moreover, by using the Fourier transform,
we see that, for all $j\in\{0,\,1,\,\ldots,\,n\}$ and $(x,\,t)\in\rr^{n+1}_+$,
$(Q^{(j)}_t*f_k)(x)=(R_j(f_k)*P_t)(x)$ (see also \cite[p.\,65, Theorem 3]{St70}),
which, together with
Proposition \ref{pro PIC} and the boundedness of $R_j$ on $H_\fai(\rn)$
(see Corollary \ref{cor bd of Riesz} below), implies that,
for all $j\in\{0,\,1,\,\ldots,\,n\}$,
\begin{eqnarray*}
\dsup_{t\in(0,\,\fz)}\lf\|\lf|u_j^{k}(\cdot,\,t)\r|\r\|_{L^\fai(\rn)}\ls
 \|R_j(f_k)\|_{H_\fai(\rn)}\ls \|f_k\|_{H_\fai(\rn)}\ls \|f\|_{H_\fai(\rn)}.
\end{eqnarray*}
Thus,
\begin{eqnarray}\label{eqn est1 pro MOHtoMOHVH}
\dsup_{t\in(0,\,\fz)}\lf\|F_k(\cdot,\,t)\r\|_{L^\fai(\rn)}\ls\|f\|_{H_\fai(\rn)}<\fz,
\end{eqnarray}
which implies that $F_k\in \mathcal{H}_\fai(\rr^{n+1}_+)$ and hence
$F_k\in \mathcal{H}_\fai(\rr^{n+1}_+)\cap \mathcal{H}^2(\rr^{n+1}_+)$.

We point out that, in the above argument, we used the boundedness of the Riesz transform
$R_j$ on $H_\fai(\rn)$, which will be proved in Corollary \ref{cor bd of Riesz} below,
whose proof does not use the conclusion of Proposition \ref{pro MOHtoMOHVH}.
So, there exists no risk of circular reasoning.

On the other hand, from $\lim_{k\to \fz}f_k=f$ in $H_\fai(\rn)$ and hence in
$\mathcal{S}'(\rn)$, $$\lim_{k\to \fz} R_j(f_k)=
R_j(f)$$ in $H_\fai(\rn)$ and hence in
$\mathcal{S}'(\rn)$, and \eqref{eqn decomposition PK}, we deduce that,
for all $(x,\,t)\in\rr^{n+1}_+$,
$\lim_{k\to\fz} f_k*P_t(x)=f*P_t(x)$ and $\lim_{k\to\fz} R_j(f_k)*P_t(x)=R_j(f)*P_t(x)$.

Now, we claim that the above two limits are uniform on compact sets.
Indeed, for all $(x,\,t)\in\rr^{n+1}_+$, $y,\,z\in B(x,\,\frac{t}{4})$,
$\wz t\in(\frac{3t}{4},\,\frac{5t}{4})$
and $\phi\in\mathcal{S}(\rn)$ satisfies $\int_\rn \phi(x)\,dx=1$, by the
definition of the non-tangential maximal function, we know that
\begin{eqnarray}\label{eqn est2 pro MOHtoMOHVH}
\lf|\lf(\lf[f_k-f\r]*\phi_{\wz t}\r)(z)\r|\le \mathcal{M}_\phi^*\lf(f_k-f\r)(y).
\end{eqnarray}

Moreover, for any $\epsilon\in(0,\,1]$ and $q\in(I(\fai),\,\fz)$, from the upper type $q$ property of $\fai(x,\cdot)$, it follows that
\begin{eqnarray*}
\epsilon^q \dint_{\{x\in\rn:\ \mathcal{M}_\phi^*\lf(f_k-f\r)(x)>\epsilon\}} \fai\lf(x,\,1\r)
\,dx&&\ls \dint_{\{x\in\rn:\ \mathcal{M}_\phi^*\lf(f_k-f\r)(x)>\epsilon\}}
\fai\lf(x,\,\epsilon\r)\,dx\\
&& \ls \dint_{\rn} \fai\lf(x,\,\mathcal{M}_\phi^*\lf(f_k-f\r)(x)\r)\,dx,
\end{eqnarray*}
which tends to $0$ as $k\to \fz$. Thus, $\mathcal{M}_\phi^*\lf(f_k-f\r)$ converges
to $0$ in the measure $\fai(\cdot,\,1)\,dx$. This shows that there exists $k_0\in\nn$
such that, for all $k\in\nn$ with $k\ge k_0$,
\begin{eqnarray}\label{eqn est3 pro MOHtoMOHVH}
\dint_{B(x,\,\frac{t}{4})}\fai(y,\,1)\,dy\ge 2\dint_{\mathrm{E}_k^\complement}
\fai\lf(y,\,1\r)\,dy,
\end{eqnarray}
where $\mathrm{E}_k:=\{y\in B(x,\,\frac{t}{4}):\
\mathcal{M}_\phi^*\lf(f_k-f\r)(y)< 1\}$.

Combined \eqref{eqn est2 pro MOHtoMOHVH} with \eqref{eqn est3 pro MOHtoMOHVH} and the upper type
1 property of $\fai(x,\cdot)$,
we conclude that, for all $z\in B(x,\,\frac{t}{4})$ and $\wz{t}\in (\frac{3t}{4},\,\frac{5t}{4})$,
\begin{eqnarray*}
\lf|\lf(\lf[f_k-f\r]*\phi_{\wz t}\r)(z)\r|&&\le \frac{1}{\int_{\mathrm{E}_k}\fai(y,\,1)\,dy}
\dint_{\mathrm{E}_k} \mathcal{M}_\phi^*\lf(f_k-f\r)(y) \fai(y,\,1)\,dy\\
&&\ls \frac{1}{\int_{B(x,\,\frac{t}{4})}\fai(y,\,1)\,dy}
\dint_{\mathrm{E}_k}  \fai(y,\,\mathcal{M}_\phi^*\lf(f_k-f\r)(y))\,dy,
\end{eqnarray*}
which tends to $0$ as $k\to \fz$. This implies that $f_k*\phi_t$
converges uniformly to $f*\phi_t$ on $B(x,\,\frac{t}{4})\times (\frac{3t}{4},\,\frac{5t}{4})$.

Moreover, using \eqref{eqn decomposition PK}, we know
$\lim_{k\to\fz} f_k*P_t(x)=f*P_t(x)$ uniform on compact sets.
Similarly, we also conclude $\lim_{k\to\fz} R_j(f_k)*P_t(x)=R_j(f)*P_t(x)$
uniform on compact sets. This shows the above claim.

By the above claim and the fact that $F_k$ satisfies the generalized
Cauchy-Riemann equation \eqref{eqn GCR equation}, we know that $F:=\{f*P_t,\,R_1(f)*P_t,\,\ldots,\,R_n(f)*P_t\}$ also satisfies the
generalized Cauchy-Riemann equation \eqref{eqn GCR equation}, which, together with
Fatou's lemma and \eqref{eqn est1 pro MOHtoMOHVH}, implies that
\begin{eqnarray*}
\dsup_{t\in(0,\,\fz)}\lf\|\lf|F(\cdot,\,t)\r|\r\|_{L^\fai(\rn)}&&=\dsup_{t\in(0,\,\fz)}
\lf\|\lim_{k\to\fz}\lf|F_k(\cdot,\,t)\r|\r\|_{L^\fai(\rn)}\\
&&\le\dsup_{t\in(0,\,\fz)}
\mathop{\underline{\lim}}_{k\to \fz} \lf\|\lf|F_k(\cdot,\,t)\r|\r\|_{L^\fai(\rn)}
\ls \|f\|_{H_\fai(\rn)}<\fz.
\end{eqnarray*}
Thus, $F\in \mathcal{H}_{\fai,\,2}(\rr^{n+1}_+)$ and \eqref{eqn pro MOHtoMOHVH}
holds true,
which completes the proof of Proposition \ref{pro MOHtoMOHVH}.
\end{proof}

Combined Propositions \ref{pro HFC}, \ref{pro HVC} with \ref{pro MOHtoMOHVH}, we
immediately obtain the following conclusion.

\begin{theorem}\label{cor equiv 3spaces}
Let $\fai$ satisfy Assumption $(\fai)$ with $\frac{i(\fai)}{q(\fai)}>
\frac{n-1}{n}$, where $i(\fai)$
and $q(\fai)$ are as in \eqref{1.6}
and \eqref{1.7}, respectively. Then the spaces $H_\fai(\rn)$,
$H_{\fai,\,2}(\rr^{n+1}_+)$ and  $\mathcal{H}_{\fai,\,2}(\rr^{n+1}_+)$,
defined, respectively, in Definitions \ref{def MOH space}, \ref{def MOHH space p>1}
and \ref{def MOHVH space}, are isomorphic to each other.

More precisely, the following statements hold
true:
\begin{itemize}
\vspace{-0.25cm}
\item[{\rm (i)}] $u\in H_{\fai,\,2}(\rr^{n+1}_+)$ if and only
if there exists $f\in H_\fai(\rn)$ such that, for all $(x,\,t)\in\rr^{n+1}_+$,
$u(x,\,t)=(f*P_t)(x)$,
where $P_t$ is the Poisson kernel as in \eqref{eqn PK}.
\vspace{-0.25cm}
\item[{\rm (ii)}]  If $F:=(u_0,\,u_1,\,\ldots,\,u_n)\in
\mathcal{H}_{\fai,\,2}(\rr^{n+1}_+)$, then $u_0\in H_{\fai,\,2}(\rr^{n+1}_+)$.
\vspace{-0.25cm}
\item[{\rm (iii)}]  If $f\in
{H}_{\fai}(\rn)$, then there exists
$F:=\{u_0,\,u_1,\,\ldots,\,u_n\}\in \mathcal{H}_{\fai,\,2}(\rr^{n+1}_+)$ such
that, for all $(x,\,t)\in\rr^{n+1}_+$, $u_0(x,\,t):=f*P_t(x)$.
\end{itemize}
\end{theorem}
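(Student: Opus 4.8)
The plan is to deduce Theorem \ref{cor equiv 3spaces} by assembling Propositions \ref{pro HFC}, \ref{pro HVC} (through Corollary \ref{cor HVC}) and \ref{pro MOHtoMOHVH}, and then by chaining the quasi-norm estimates they provide so as to upgrade the three one-directional maps into mutually inverse isomorphisms.

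Statement (i) will be nothing but Proposition \ref{pro HFC}: the Poisson integral $f\mapsto(f\ast P_t)$, with $P_t$ as in \eqref{eqn PK} and $f\ast P_t$ understood in the sense of Remark \ref{rem PIC}, maps $H_\fai(\rn)$ bijectively onto $H_{\fai,\,2}(\rr^{n+1}_+)$ with $\|f\|_{H_\fai(\rn)}\sim\|f\ast P_t\|_{H_\fai(\rr^{n+1}_+)}$. Statement (ii) will follow immediately from Corollary \ref{cor HVC}: if $F=(u_0,\,\ldots,\,u_n)\in\mathcal{H}_{\fai,\,2}(\rr^{n+1}_+)$, then $u_0\in H_{\fai,\,2}(\rr^{n+1}_+)$ and $\|u_0\|_{H_\fai(\rr^{n+1}_+)}\ls\|F\|_{\mathcal{H}_\fai(\rr^{n+1}_+)}$. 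Statement (iii) is precisely Proposition \ref{pro MOHtoMOHVH}: given $f\in H_\fai(\rn)$, the harmonic vector built from the Poisson and conjugate Poisson integrals \eqref{eqn PK} and \eqref{eqn CPK} satisfies the generalized Cauchy-Riemann system \eqref{eqn GCR equation}, lies in $\mathcal{H}_{\fai,\,2}(\rr^{n+1}_+)$, and obeys $\|F\|_{\mathcal{H}_\fai(\rr^{n+1}_+)}\ls\|f\|_{H_\fai(\rn)}$.

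The remaining task will be to close the loop on quasi-norms. Fix $f\in H_\fai(\rn)$, let $u:=(f\ast P_t)\in H_{\fai,\,2}(\rr^{n+1}_+)$, and let $F=\{u_0,\,\ldots,\,u_n\}\in\mathcal{H}_{\fai,\,2}(\rr^{n+1}_+)$ be the vector produced by (iii); by construction $u_0=f\ast P_t=u$. Then (ii), (iii) and (i) give
\[
\|u\|_{H_\fai(\rr^{n+1}_+)}=\|u_0\|_{H_\fai(\rr^{n+1}_+)}\ls\|F\|_{\mathcal{H}_\fai(\rr^{n+1}_+)}\ls\|f\|_{H_\fai(\rn)}\sim\|u\|_{H_\fai(\rr^{n+1}_+)},
\]
so the four quantities $\|f\|_{H_\fai(\rn)}$, $\|u\|_{H_\fai(\rr^{n+1}_+)}$, $\|u_0\|_{H_\fai(\rr^{n+1}_+)}$ and $\|F\|_{\mathcal{H}_\fai(\rr^{n+1}_+)}$ are pairwise comparable. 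Consequently the assignments $f\mapsto u$, $f\mapsto F$ and $F\mapsto u_0$ are isomorphisms, with inverses furnished by Proposition \ref{pro HFC} and by the boundary-value map of Lemma \ref{lem boudnary value of MOHVH}, which is exactly the assertion of the theorem. Since all three spaces are defined by completion, we will also record that each map is first defined on the dense subspaces $H_\fai(\rn)\cap L^2(\rn)$, $H_\fai(\rr^{n+1}_+)\cap H^2(\rr^{n+1}_+)$ and $\mathcal{H}_\fai(\rr^{n+1}_+)\cap\mathcal{H}^2(\rr^{n+1}_+)$, where the Poisson and conjugate Poisson integrals and the boundary-value map are literally well defined, and then extended by continuity using the two-sided bounds above together with Remark \ref{rem defMOH}(iii).

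We expect no genuinely new obstacle here: the analytic heart, namely the subharmonicity of $|F|^q$, the harmonic-majorant estimate of Lemma \ref{lem harmonic majorant of MOHVH}, the boundary-value construction of Lemma \ref{lem boudnary value of MOHVH}, and the weak-compactness arguments, has already been carried out in the cited statements. The only point requiring care will be the bookkeeping: identifying the first component $u_0$ of the Cauchy-Riemann vector with the Poisson integral $u$, and verifying that passage to completions preserves these identifications, so that the boundary-value map of Lemma \ref{lem boudnary value of MOHVH} indeed inverts $f\mapsto F$ on all of $\mathcal{H}_{\fai,\,2}(\rr^{n+1}_+)$.
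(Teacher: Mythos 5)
Your proposal is correct and takes essentially the same route as the paper: the paper's proof consists of a single line, ``Combined Propositions \ref{pro HFC}, \ref{pro HVC} with \ref{pro MOHtoMOHVH}, we immediately obtain the following conclusion,'' which is exactly the assembly you describe. Your extra bookkeeping (identifying $u_0$ with $f*P_t$, chaining the one-sided estimates into two-sided comparability, and noting that everything is first set up on the dense $L^2$- or $H^2$-cores before passing to completions) only makes explicit what the paper leaves implicit.
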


\subsection{First order Riesz transform characterizations}
\label{s24}

\hskip\parindent In this subsection,
we give out the proof of Theorem \ref{thm1}.
To this end, we first give a sufficient condition on operators to be bounded on $H_\fai(\rn)$.
We now recall the notion of $H_\fai(\rn)$-atoms introduced in \cite[Definition 2.4]{K11} as follows.
\begin{definition}[\cite{K11}]\label{def MOH atom}
Let $\fai$ satisfy Assumption $(\fai)$,
$q\in(q(\fai),\fz]$ and $s\in\zz_+$ satisfy $s\ge\lfz
n[\frac{q(\fai)}{i(\fai)}-1]\rfz$, where
$q(\fai)$ and $i(\fai)$ are as in \eqref{1.7}
and \eqref{1.6}, respectively. A measurable function $a$ on
$\rn$ is called a \emph{$(\fai,\,q,\,s)$-atom}, if there exists a ball
$B\subset\rn$ such that

$\mathrm{(i)}$ $\supp a\subset B$;

$\mathrm{(ii)}$
$\|a\|_{L^q_{\fai}(B)}\le\|\chi_B\|_{L^\fai(\rn)}^{-1}$,
where
\begin{equation*}
\|a\|_{L^q_{\fai}(B)}:=
\begin{cases}\dsup_{t\in (0,\fz)}
\lf[\frac{1}
{\fai(B,t)}\dint_{\rn}|a(x)|^q\fai(x,t)\,dx\r]^{1/q},& q\in [1,\fz),\\
\|a\|_{L^{\fz}(B)},&q=\fz,
\end{cases}
\end{equation*}
and $\fai(B,\,t):=\int_{B}\fai(x,\,t)\,dx$;

$\mathrm{(iii)}$ $\int_{\rn}a(x)x^{\az}\,dx=0$ for all
$\az:=(\az_1,\,\ldots,\,\az_n)\in\zz_+^n$ with $|\az|:=
\az_1+\dots+\az_n\le s$.
\end{definition}

Let $T$ be a sublinear operator.  Recall that $T$ is said to be \emph{nonnegative}
if, for all $f$ in the domain of $T$, $Tf \ge0$ .

\begin{lemma}\label{lem sufficent COMHS}
Let $\fai$ satisfy Assumption $(\fai)$ and
$s\in \zz_+$ satisfy $s\ge m(\fai):=\lfloor n(\frac{q(\fai)}{i(\fai)}-1) \rfloor$,
where $i(\fai)$ and $q(\fai)$ are as in \eqref{1.6}
and \eqref{1.7}, respectively.
Suppose that $T$ is a linear
(resp. nonnegative sublinear) operator, which is of weak type $(L^2(\rn),\,L^2(\rn))$.
If there exists a positive constant $C$ such that, for every
$\lz\in\cc$ and $(\fai,\,q,\,s)$-atom $a$ associated with the ball $B$,
\begin{eqnarray}\label{eqn sufficient CD}
\dint_{\rn}\fai\lf(x,\,T\lf(\lz a\r)(x)\r)\,dx\le C \dint_{B}\fai
\lf(x,\,\frac{|\lz|}{\|\chi_B\|_{L^\fai(\rn)}}\r)\,dx,
\end{eqnarray}
then $T$ can be extended to a bounded linear (resp. nonnegative sublinear) operator
from $H_\fai(\rn)$ to $L^\fai(\rn)$.
\end{lemma}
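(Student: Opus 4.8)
The plan is to decompose an arbitrary $f\in H_\fai(\rn)\cap L^q_{\fai}(\rn)$ (the natural dense class on which $T$ acts) into a sum of atoms in a way that is quantitatively compatible with the Musielak-Orlicz quasi-norm, apply the hypothesis \eqref{eqn sufficient CD} atom-by-atom, and then reassemble using the subadditivity of the modular together with standard convexity estimates for $\fai$. More precisely, the first step is to invoke the atomic decomposition of $H_\fai(\rn)$ from \cite[Theorem 3.1]{K11}: given $f\in H_\fai(\rn)$, write $f=\sum_i \lz_i a_i$ in $\cs'(\rn)$ (and, on the dense subspace $H_\fai(\rn)\cap L^2(\rn)$, also in $L^2(\rn)$), where each $a_i$ is a $(\fai,\,q,\,s)$-atom supported in a ball $B_i$ and the coefficients satisfy the control
\begin{eqnarray*}
\dsum_i \dint_{B_i}\fai\lf(x,\,\frac{|\lz_i|}{\|\chi_{B_i}\|_{L^\fai(\rn)}}\r)\,dx\ls \|f\|_{H_\fai(\rn)}\quad\text{(appropriately normalized)}.
\end{eqnarray*}
Here the condition $s\ge m(\fai)$ is exactly what is needed for this decomposition to be available with the prescribed vanishing moments.

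The second step is to define $Tf:=\sum_i \lz_i T a_i$, first for $f\in H_\fai(\rn)\cap L^2(\rn)$, where the series converges in $L^2(\rn)$ by the weak $(L^2,L^2)$ hypothesis (combined with the $L^2$-convergence of $\sum_i\lz_i a_i$; one passes from weak type to a usable estimate on the tails via a standard truncation/Kolmogorov-type argument, or simply notes that for the linear case $T$ extends by density from $L^2$). Then I would estimate the modular of $Tf$: using that $\fai(x,\cdot)$ is of lower type $p\le 1$ (so $\fai(x,\sum_i t_i)\le \sum_i \fai(x,t_i)$ up to a constant after the usual rescaling), one gets, for a suitable $\lz_0>0$,
\begin{eqnarray*}
\dint_{\rn}\fai\lf(x,\,\frac{|Tf(x)|}{\lz_0}\r)\,dx\ls \dsum_i \dint_{\rn}\fai\lf(x,\,T(\lz_i a_i)(x)\r)\,dx\ls \dsum_i \dint_{B_i}\fai\lf(x,\,\frac{|\lz_i|}{\|\chi_{B_i}\|_{L^\fai(\rn)}}\r)\,dx\ls 1,
\end{eqnarray*}
where the first inequality is the subadditivity-of-modular step, the second is exactly hypothesis \eqref{eqn sufficient CD}, and the third is the coefficient control from the atomic decomposition (after normalizing $\|f\|_{H_\fai(\rn)}=1$, say, and absorbing constants into $\lz_0$). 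This yields $\|Tf\|_{L^\fai(\rn)}\ls \|f\|_{H_\fai(\rn)}$ for $f$ in the dense subspace, and then a density/completion argument (using that $H_\fai(\rn)\cap L^2(\rn)$ is dense in $H_\fai(\rn)$, Remark \ref{rem defMOH}(iii)) extends $T$ to all of $H_\fai(\rn)$ in the linear case; in the nonnegative sublinear case one argues similarly, using that $|Tf|\le \sum_i |T(\lz_i a_i)|$ pointwise follows from sublinearity applied to partial sums together with the $L^2$-convergence (passing to an a.e.-convergent subsequence).

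The main obstacle I expect is the interchange of $T$ with the infinite sum and the attendant convergence issues: hypothesis \eqref{eqn sufficient CD} is stated only for a single atom, so to legitimately write $\fai(x,|Tf(x)|)\ls\sum_i\fai(x,|T(\lz_i a_i)(x)|)$ one must first know that $Tf=\sum_i \lz_i Ta_i$ holds pointwise a.e. (or in a topology compatible with the modular). This is where the weak $(L^2,L^2)$ assumption is essential — it guarantees, via the $L^2$-convergence of the atomic series on the dense class, that $T$ applied to the sum equals the sum of $T$ applied to the pieces, at least after passing to a subsequence converging a.e.; the delicate point is the uniform integrability needed to justify the modular estimate on the full (non-truncated) series, which one handles by the lower-type bound $\fai(x,2^{-1}\sum t_i)\ls \sum\fai(x,t_i)$ valid precisely because $\fai(x,\cdot)$ has lower type $p\in(0,1]$. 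Once this is in place, the rest is the routine modular bookkeeping sketched above, and the normalization between the Luxembourg quasi-norm and the modular is handled by the standard fact that $\|g\|_{L^\fai(\rn)}\le 1$ iff $\int_\rn\fai(x,|g(x)|)\,dx\le 1$.
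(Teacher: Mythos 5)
Your sketch is correct in substance, but note that the paper does not actually carry out this argument: its entire proof consists of the remark that the lemma is a special case of \cite[Lemma 5.6]{yys} with $L=-\bdz$, with the operator-adapted atoms there replaced by $(\fai,\,q,\,s)$-atoms, and all details are omitted. What you have written is, in effect, a reconstruction of the standard atomic-decomposition argument that the cited lemma encapsulates, and the ingredients you identify are the right ones: the atomic decomposition of $H_\fai(\rn)$ from \cite{K11} together with its $L^2$-convergence on the dense class $H_\fai(\rn)\cap L^2(\rn)$; the weak $(L^2,\,L^2)$ hypothesis, whose real role is to give $Tf_N\to Tf$ in measure for the partial sums $f_N$ and hence, along a subsequence, the pointwise bound $|Tf|\le\sum_i|T(\lz_i a_i)|$ a.e.\ (this is all that is needed, and is exactly what weak type provides; neither $L^2$-convergence of $\sum_i \lz_i Ta_i$ nor uniform integrability is the relevant mechanism here); the countable subadditivity $\fai(x,\sum_i t_i)\ls\sum_i\fai(x,t_i)$, which does follow from the lower type $p\le1$; the hypothesis \eqref{eqn sufficient CD} applied atom by atom; and density of $H_\fai(\rn)\cap L^2(\rn)$ for the final extension. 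One normalization you should tighten: the coefficient control in \cite{K11} is that a Luxembourg-type infimum over the $\{\lz_i a_i\}$ is comparable to $\|f\|_{H_\fai(\rn)}$, not a direct bound on $\sum_i\int_{B_i}\fai(x,\frac{|\lz_i|}{\|\chi_{B_i}\|_{L^\fai(\rn)}})\,dx$; after normalizing $\|f\|_{H_\fai(\rn)}\le1$ and rescaling each $\lz_i$ by the fixed comparison constant (legitimate because \eqref{eqn sufficient CD} holds for arbitrary $\lz\in\cc$), the chain you wrote yields a uniform modular bound and hence $\|Tf\|_{L^\fai(\rn)}\ls\|f\|_{H_\fai(\rn)}$ as required.
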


\begin{proof}
Lemma \ref{lem sufficent COMHS} is a special case of  \cite[Lemma 5.6]{yys} when
the operator $L$ considered therein is the Laplace operator
$-\bdz$. The only difference is that here we use the
$(\fai,\,q,\,s)$-atoms to replace the operator-adapted atoms therein,
the details being omitted. This finishes the proof of Lemma \ref{lem sufficent COMHS}.
\end{proof}

Using Lemma \ref{lem sufficent COMHS}, we establish the following proposition of
the interpolation of operators.

\begin{proposition}\label{pro interpolation}
Let $\fai$ satisfy Assumption $(\fai)$,
$I(\fai)$ and $i(\fai)$ be as in \eqref{1.5} and \eqref{1.6}, respectively.
Assume that $T$ is a linear (resp. nonnegative sublinear)
operator and either of the following two conditions holds true:
\vspace{-0.25cm}
\begin{itemize}
\item[{\rm (i)}] if $0<p_1<i(\fai)\le I(\fai)\le 1<p_2<\fz$ and, for all $t\in(0,\,\fz)$,
$T$ is of weak type $(H_{\fai(\cdot,\,t)}^{p_1}(\rn),\,L_{\fai(\cdot,\,t)}^{p_1}(\rn))$
and  of weak type  $(L_{\fai(\cdot,\,t)}^{p_2}(\rn),\,L_{\fai(\cdot,\,t)}^{p_2}(\rn))$;
\vspace{-0.25cm}
\item[{\rm (ii)}] if $0<p_1<i(\fai)\le I(\fai)<p_2\le 1$ and, for all $t\in(0,\,\fz)$,
$T$ is of weak type $(H_{\fai(\cdot,\,t)}^{p_1}(\rn),\,L_{\fai(\cdot,\,t)}^{p_1}(\rn))$
and  of weak type  $(H_{\fai(\cdot,\,t)}^{p_2}(\rn),\,L_{\fai(\cdot,\,t)}^{p_2}(\rn))$.
\end{itemize}
\vspace{-0.25cm}
Then $T$ is bounded from $H_\fai(\rn)$ to $L^\fai(\rn)$.
\end{proposition}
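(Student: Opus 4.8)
The plan is to deduce Proposition \ref{pro interpolation} from Lemma \ref{lem sufficent COMHS}. By that lemma, it suffices to verify that $T$ is of weak type $(L^2(\rn),\,L^2(\rn))$ and that the single-atom estimate \eqref{eqn sufficient CD} holds for some admissible order $s\in\zz_+$ and all $(\fai,\,q,\,s)$-atoms. Throughout we take $q:=\fz$ (which is permitted in Definition \ref{def MOH atom}) and we fix $s:=\max\{m(\fai),\,\lfz n(q(\fai)/p_1-1)\rfz\}$, a finite integer no smaller than $m(\fai)$. The weak-type $(L^2,\,L^2)$ assertion is needed in Lemma \ref{lem sufficent COMHS} only to make $T$ well defined on the atomic decomposition, and it follows routinely by interpolating the assumed weighted weak-type estimates; so from now on we concentrate on \eqref{eqn sufficient CD}.

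Fix a ball $B\subset\rn$, a $(\fai,\,\fz,\,s)$-atom $a$ supported in $B$, and $\lz\in\cc$; put $\az:=|\lz|/\|\chi_B\|_{L^\fai(\rn)}$, so that $|\lz a|\le\az$ almost everywhere on $B$. The heart of the matter is the following observation: for \emph{every} $t\in(0,\,\fz)$, writing $w:=\fai(\cdot,\,t)$, the function $\lz a/(\az\,[\fai(B,\,t)]^{1/p_1})$ is, up to a fixed multiplicative constant, a classical weighted $(w,\,\fz,\,s)$-atom for the weighted Hardy space $H^{p_1}_w(\rn)$ (in the sense of \cite{G79,S-T89}); indeed $w\in A_\ell(\rn)$ for all $\ell>q(\fai)$ with $A_\ell$-constant independent of $t$ by \eqref{1.7}, our choice of $s$ supplies enough vanishing moments, and the size normalization follows from $|\lz a|\le\az$ on $B$. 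Analogously, $\lz a/(\az\,[\fai(B,\,t)]^{1/p_2})$ is a bounded multiple of a weighted $(w,\,\fz,\,s)$-atom for $H^{p_2}_w(\rn)$, and $\|\lz a\|_{L^{p_2}_w(\rn)}\le\az\,[\fai(B,\,t)]^{1/p_2}$. Hence, applying the hypothesis at $p_1$ (weighted weak type $(H^{p_1}_w,\,L^{p_1}_w)$) and at $p_2$ (weighted weak type $(L^{p_2}_w,\,L^{p_2}_w)$ in case (i), or $(H^{p_2}_w,\,L^{p_2}_w)$ in case (ii)), with constants uniform in $t$, and using $w(\{|g|>\mu\})\le\mu^{-p}\|g\|_{L^{p,\,\fz}_w(\rn)}^{p}$, we obtain, for $i\in\{1,\,2\}$ and all $t,\,\mu\in(0,\,\fz)$,
\begin{eqnarray}\label{eqn plan key}
\dint_{\{x\in\rn:\ |T(\lz a)(x)|>\mu\}}\fai(x,\,t)\,dx\ls\lf(\frac{\az}{\mu}\r)^{p_i}\fai(B,\,t).
\end{eqnarray}
The decisive move is to \emph{freeze the weight-scale $t$ at the level $\mu$}, i.e. to take $t:=\mu$ in \eqref{eqn plan key}.

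To conclude, recall from Remark \ref{rem defMOH}(ii) and the standing continuity assumption on $\fai(x,\,\cdot)$ that $\fai(x,\,r)\sim\wz\fai(x,\,r)=\int_0^r\fai(x,\,\sz)\,\sz^{-1}\,d\sz$, so, by Tonelli's theorem,
\begin{eqnarray*}
\dint_\rn\fai\lf(x,\,|T(\lz a)(x)|\r)\,dx\sim\dint_0^\fz\frac1\mu\lf[\dint_{\{x\in\rn:\ |T(\lz a)(x)|>\mu\}}\fai(x,\,\mu)\,dx\r]\,d\mu.
\end{eqnarray*}
We split the $\mu$-integral at $\mu=\az$, insert \eqref{eqn plan key} with $t=\mu$ using the exponent $p_1$ on $(0,\,\az)$ and the exponent $p_2$ on $(\az,\,\fz)$, and bound $\fai(B,\,\mu)$ by a constant times $(\mu/\az)^{p_1'}\fai(B,\,\az)$ on $(0,\,\az)$ and by a constant times $(\mu/\az)^{p_2'}\fai(B,\,\az)$ on $(\az,\,\fz)$, where $p_1'\in(p_1,\,i(\fai))$ is a lower type and $p_2'\in(I(\fai),\,p_2)$ (in case (i), any $p_2'\in(I(\fai),\,1]$) is an upper type of $\fai$. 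This reduces everything to the elementary integrals $\int_0^\az\mu^{p_1'-p_1-1}\,d\mu$ and $\int_\az^\fz\mu^{p_2'-p_2-1}\,d\mu$, which converge exactly because $p_1<p_1'$ and $p_2'<p_2$, and each resulting term equals a fixed multiple of $\fai(B,\,\az)=\int_B\fai(x,\,|\lz|/\|\chi_B\|_{L^\fai(\rn)})\,dx$. This is precisely \eqref{eqn sufficient CD}, and an appeal to Lemma \ref{lem sufficent COMHS} finishes the proof.

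The step I expect to be the main obstacle is the bookkeeping of uniformity in the auxiliary scale $t$: because $\fai$ is genuinely a function of two variables, every weighted estimate above must carry a constant independent of $t$, which forces systematic use of the uniform-in-$t$ formulations of the critical indices in \eqref{1.5}, \eqref{1.6} and \eqref{1.7}, together with the attendant uniform $A_\ell$-constants (hence uniform doubling, cf. \eqref{eqn db property} and \eqref{eqn rdb property}). A secondary technicality is the choice of $s$, $p_1'$ and $p_2'$ ensuring that $a$ is simultaneously a legitimate weighted $H^{p_1}_w$- and $H^{p_2}_w$-atom (or $L^{p_2}_w$-function) for \emph{all} the weights $\{\fai(\cdot,\,t)\}_{t>0}$ at once, together with the routine deduction of the weak-type $(L^2,\,L^2)$ hypothesis of Lemma \ref{lem sufficent COMHS} from the weighted ones.
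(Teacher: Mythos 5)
Your proof is correct and follows essentially the same strategy as the paper's: reduction to the single-atom estimate of Lemma \ref{lem sufficent COMHS}, the two weighted weak-type bounds obtained by recognising the (rescaled) atom as a weighted $H^{p_1}_{\fai(\cdot,\,t)}$-atom and as an $L^{p_2}_{\fai(\cdot,\,t)}$- (or $H^{p_2}_{\fai(\cdot,\,t)}$-) function, and the Fubini identity $\int_\rn\fai(x,\,|T(\lz a)(x)|)\,dx\sim\int_0^\fz\mu^{-1}\int_{\{|T(\lz a)|>\mu\}}\fai(x,\,\mu)\,dx\,d\mu$ split at $\az=|\lz|/\|\chi_B\|_{L^\fai(\rn)}$, which is precisely the ``freeze the weight scale at the threshold'' move you single out. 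The only cosmetic deviations are your choice $q=\fz$ in place of the paper's $q\in(\max\{q(\fai),\,p_2\},\,\fz)$ (which shortens the H\"older step) and your phrasing of the tail estimates via lower/upper types $p_1'$, $p_2'$ rather than via the monotonicity of $t\mapsto\fai(x,\,t)/t^{p_1+\epsilon}$ and $t\mapsto\fai(x,\,t)/t^{p_2-\epsilon}$, which are equivalent.
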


\begin{proof}
Assume first that (i) holds true. Let $q\in(\max\{q(\fai),\,p_2\},\fz)$, $s\in\zz_+$
satisfy $s\ge\lfz n(\frac{q(\fai)}{p_1}-1)\rfz$ with $q(\fai)$ as in \eqref{1.7},
$\lz\in(0,\,\fz)$ and $a$ be a $(\fai,\,q,\,s)$-atom associated
with the ball $B$. From the fact that $T$ is of weak type $(L_{\fai(\cdot,\,t)}^{p_2}(\rn),\,L_{\fai(\cdot,\,t)}^{p_2}(\rn))$,
Definition \ref{def MOH atom}(ii) and H\"older's inequality,
it follows that, for all $\az\in(0,\,\fz)$,
\begin{eqnarray}\label{eqn est1 pro inter}
\qquad\dint_{\{x\in\rn:\ |T(\lz a)(x)|>\az\}}\fai\lf(x,\,t\r)\,dx
&&\ls \frac{1}{\az^{p_2}}
\dint_{\rn} \lf|\lz a(x)\r|^{p_2}\fai(x,\,t)\,dx\\
&&\sim \frac{\lz^{p_2}}{\az^{p_2}}
\lf[\frac1{\fai(B,\,t)}{\int_{\rn} \lf|a(x)\r|^{p_2}\fai(x,\,t)\,dx}\r]\fai(B,\,t)\nonumber\\
&&\nonumber\ls \frac{\lz^{p_2}}{\az^{p_2}} \lf\|\chi_B\r\|^{-p_2}_{L^\fai(\rn)}
\fai(B,\,t).
\end{eqnarray}

On the other hand, by Definition \ref{def MOH atom} again, we conclude that
\begin{eqnarray*}
\lf\|\lf\|\chi_B\r\|_{L^\fai(\rn)}\lf[\fai(B,\,t)\r]^{-\frac{1}{p_1}}
a\r\|_{L^{q}_{\fai(\cdot,\,t)}(\rn)}&&=\lf\|\chi_B\r\|_{L^\fai(\rn)}
\lf[\fai(B,\,t)\r]^{\frac{1}{q}-\frac{1}{p_1}} \frac{\|a\|_{L^{q}_{\fai(\cdot,\,t)}(\rn)}}
{\lf[\fai(B,\,t)\r]^{1/{q}}}\\
&&\le \lf[\fai(B,\,t)\r]^{\frac{1}{q}-\frac{1}{p_1}},
\end{eqnarray*}
which immediately implies that
$\|\chi_B\|_{L^\fai(\rn)}[\fai(B,\,t)]^{-\frac{1}{p_1}}a$ is
a weighted $(p_1,\,q,\,s)$-atom associated with $B$
(see \cite{G79,S-T89} for its definition). This, together with
the fact that $T$ is of weak type
$(H_{\fai(\cdot,\,t)}^{p_1}(\rn),\,L_{\fai(\cdot,\,t)}^{p_1}(\rn))$,
implies that
\begin{eqnarray}\label{eqn est2 pro inter}
&&\dint_{\{x\in\rn:\ |T(\lz a)(x)|>\az\}}\fai(x,\,t)\,dx\\
&&\nonumber\hs
= \dint_{\{x\in\rn:\ |T(\lz  \|\chi_B\|_{L^\fai(\rn)}[\fai(B,\,t)]^{-1/p_1}a)
(x)|>\az\|\chi_B\|_{L^\fai(\rn)}[\fai(B,\,t)]^{-1/p_1}\}}\fai(x,\,t)\,dx\\
&&\nonumber\hs \ls\frac{\lz^{p_1}}{\az^{p_1}} \lf\|\chi_B\r\|^{-p_1}_{L^\fai(\rn)}
\fai(B,\,t).
\end{eqnarray}

Now, let $R:=\frac{\lz}{\|\chi_B\|_{L^\fai(\rn)}}$. From the fact that,
for all $(x,t)\in\rn\times(0,\fz)$,
$\fai(x,t)\sim\int_0^t\frac{\fai(x,s)}{s}\,ds$ and Fubini's theorem,
we deduce that
\begin{eqnarray}\label{eqn est3 pro inter}
\dint_{\rn} \fai\lf(x,\,T(\lz a)(x)\r)\,dx &&\sim \dint_0^\fz
\frac{1}{t} \dint_{\{x\in\rn:\ |T(\lz a)(x)|>t\}}\fai(x,\,t)\,dx\,dt\\
&&\nonumber\sim \dint_0^R \frac{1}{t}
\dint_{\{x\in\rn:\ |T(\lz a)(x)|>t\}}\fai(x,\,t)\,dx\,dt+\dint_R^\fz\cdots
\\&&\nonumber=:\mathrm{I}+\mathrm{II}.
\end{eqnarray}

For $\mathrm{I}$, taking $\epsilon\in(0,\,\fz)$ sufficiently small so that
$\frac{\fai(x,\,t)}{t^{p_1+\epsilon}}$ is increasing in $t$,
by using \eqref{eqn est2 pro inter} and the fact $R=\frac{\lz}{\|\chi_B\|_{L^\fai(\rn)}}$,
we see that
\begin{eqnarray}\label{eqn est4 pro inter}
\qquad\mathrm{I}&&\ls \dint_0^R \frac{\lz^{p_1}}{t^{1+p_1}}
\,\|\chi_B\|^{-p_1}_{L^\fai(\rn)}\dint_{B}\fai(x,\,t)\,dx\,dt\\
&&\nonumber\ls\dint_0^R \frac{\lz^{p_1}}{t^{1-\epsilon}}\,dt
\,\|\chi_B\|^{-p_1}_{L^\fai(\rn)}\frac{1}{R^{p_1+\epsilon}}\dint_{B}
\fai(x,\,R)\,dx\ls\dint_{B}
\fai\lf(x,\,\frac{\lz}{\|\chi_B\|_{L^\fai(\rn)}}\r)\,dx.
\end{eqnarray}

Similarly, choosing  $\epsilon\in(0,\,1)$ sufficiently small such that
$\frac{\fai(x,\,t)}{t^{p_2-\epsilon}}$ is decreasing in $t$, it follows,
from \eqref{eqn est1 pro inter}, that
\begin{eqnarray*}
\mathrm{II}&&\ls \dint_R^\fz \frac{\lz^{p_2}}{t^{1+p_2}}
\|\chi_B\|^{-p_2}_{L^\fai(\rn)}\int_{B}\fai(x,\,t)\,dx\,dt\ls\dint_{B}
\fai\lf(x,\,\frac{\lz}{\|\chi_B\|_{L^\fai(\rn)}}\r)\,dx,
\end{eqnarray*}
which, together with \eqref{eqn est3 pro inter} and \eqref{eqn est4 pro inter}, implies
that \eqref{eqn sufficient CD} of Lemma \ref{lem sufficent COMHS} holds true.
This, combined with Lemma \ref{lem sufficent COMHS},
finishes the proof of Proposition \ref{pro interpolation} when (i) holds true.

The proof of the case when (ii) holds true is similar, the details being omitted here.
This finishes the proof of Proposition \ref{pro interpolation}.
\end{proof}

\begin{corollary}\label{cor bd of Riesz}
Let $\fai$ satisfy Assumption $(\fai)$. Then, for all $j\in\{1,\,\ldots,\,
n\}$, the Riesz transform $R_j$ is bounded on $H_\fai(\rn)$.
\end{corollary}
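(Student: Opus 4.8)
The plan is to derive Corollary \ref{cor bd of Riesz} from the interpolation result Proposition \ref{pro interpolation}, combined with the radial maximal function characterization of $H_\fai(\rn)$ in Proposition \ref{pro radial mc}. Since the Riesz transform $R_j$ is, a priori, only defined on $L^2(\rn)$, and since $H_\fai(\rn)\cap L^2(\rn)$ is dense in $H_\fai(\rn)$ (Remark \ref{rem defMOH}(iii)), it suffices to prove the a priori estimate $\|R_j(f)\|_{H_\fai(\rn)}\ls\|f\|_{H_\fai(\rn)}$ for $f\in H_\fai(\rn)\cap L^2(\rn)$ and then extend $R_j$ to $H_\fai(\rn)$ by density; note that the proofs of Propositions \ref{pro interpolation} and \ref{pro radial mc} do not invoke Proposition \ref{pro MOHtoMOHVH}, so no circular reasoning occurs. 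Fix $\phi\in\cs(\rn)$ with $\int_\rn\phi(x)\,dx=1$ and set $T:=\cm_\phi\circ R_j$, where $\cm_\phi$ is the radial maximal operator from \eqref{eqn radial mf}. Then $T$ is a nonnegative sublinear operator; moreover, since $R_j$ is bounded on $L^2(\rn)$ and $\cm_\phi(g)\ls\cm(g)$ pointwise for any locally integrable $g$ (with $\cm$ the Hardy--Littlewood maximal operator), $T$ is bounded on $L^2(\rn)$ and hence of weak type $(L^2(\rn),\,L^2(\rn))$.

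The next step is to verify hypothesis (i) of Proposition \ref{pro interpolation} for $T$. Recall that $0<i(\fai)\le I(\fai)\le1$ and that, for every $q\in(q(\fai),\,\fz)$, one has $\fai(\cdot,\,t)\in A_q(\rn)$ with $[\fai(\cdot,\,t)]_{A_q(\rn)}$ bounded uniformly in $t\in(0,\,\fz)$, by the very definition \eqref{1.7} of $q(\fai)$; this uniformity is precisely what forces all the constants below to be independent of $t$. Pick $p_2\in(q(\fai),\,\fz)$ (so that $p_2>1\ge I(\fai)$) and $p_1\in(0,\,i(\fai))$. Fix $t\in(0,\,\fz)$ and put $w:=\fai(\cdot,\,t)$. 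Since $w\in A_{p_2}(\rn)$, classical Muckenhoupt theory (see, e.g., \cite{GR85}) shows that both $R_j$ and $\cm$, hence $\cm_\phi$, are bounded on $L^{p_2}_w(\rn)$ with norms controlled by $[w]_{A_{p_2}(\rn)}$; thus $T$ is bounded, in particular of weak type, on $L^{p_2}_{\fai(\cdot,\,t)}(\rn)$, uniformly in $t$. On the other hand, since $w\in A_\fz(\rn)$ and the kernel $y\mapsto y_j/|y|^{n+1}$ of $R_j$ is smooth away from the origin (so that $R_j$ is a Calder\'on--Zygmund operator with arbitrarily many orders of kernel smoothness and the requisite cancellation), $R_j$ is bounded on the weighted Hardy space $H^{p_1}_w(\rn)$ (see \cite{G79,S-T89}) with norm controlled by $[w]_{A_q(\rn)}$ for a fixed $q\in(q(\fai),\,\fz)$; combining this with the weighted radial maximal function characterization $\|\cm_\phi(g)\|_{L^{p_1}_w(\rn)}\sim\|g\|_{H^{p_1}_w(\rn)}$ yields
\begin{eqnarray*}
\|T(f)\|_{L^{p_1}_w(\rn)}\sim\|R_j(f)\|_{H^{p_1}_w(\rn)}\ls\|f\|_{H^{p_1}_w(\rn)},
\end{eqnarray*}
so $T$ is of weak type $(H^{p_1}_{\fai(\cdot,\,t)}(\rn),\,L^{p_1}_{\fai(\cdot,\,t)}(\rn))$, uniformly in $t$. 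Hence hypothesis (i) of Proposition \ref{pro interpolation} is satisfied.

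By Proposition \ref{pro interpolation}, $T=\cm_\phi\circ R_j$ is bounded from $H_\fai(\rn)$ to $L^\fai(\rn)$. Therefore, for $f\in H_\fai(\rn)\cap L^2(\rn)$, Proposition \ref{pro radial mc} gives
\begin{eqnarray*}
\|R_j(f)\|_{H_\fai(\rn)}\sim\|\cm_\phi(R_j(f))\|_{L^\fai(\rn)}=\|T(f)\|_{L^\fai(\rn)}\ls\|f\|_{H_\fai(\rn)},
\end{eqnarray*}
and, by the density of $H_\fai(\rn)\cap L^2(\rn)$ in $H_\fai(\rn)$, $R_j$ extends uniquely to a bounded linear operator on $H_\fai(\rn)$, which is the desired conclusion.

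The main obstacle I anticipate is the second step, namely ensuring that all the weighted estimates — the $L^{p_2}_{\fai(\cdot,\,t)}$-boundedness of $R_j$ and $\cm$, and the $H^{p_1}_{\fai(\cdot,\,t)}$-boundedness of $R_j$ together with the weighted radial maximal characterization — hold with constants independent of $t$, so that the hypotheses of Proposition \ref{pro interpolation} (whose proof implicitly relies on such uniformity) are genuinely met; this rests on the finiteness of $q(\fai)$ and the uniform bound on $[\fai(\cdot,\,t)]_{A_q(\rn)}$ for $q>q(\fai)$, and on the fact that the number of vanishing moments required of the atoms for $H^{p_1}_{\fai(\cdot,\,t)}(\rn)$ stays bounded as $t$ varies. (Alternatively, Corollary \ref{cor bd of Riesz} can be proved directly by checking the hypotheses of Lemma \ref{lem sufficent COMHS} for $T=\cm_\phi\circ R_j$ via the atomic characterization of $H_\fai(\rn)$, but the route through Proposition \ref{pro interpolation} is cleaner and produces additional useful byproducts.)
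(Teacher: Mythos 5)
Your proof is correct and follows essentially the same route as the paper: set $T:=\cm_\phi\circ R_j$, verify the two weak-type endpoint estimates (boundedness of $R_j$ on $H^{p_1}_{\fai(\cdot,t)}(\rn)$ with $p_1<i(\fai)$ and on $L^{p_2}_{\fai(\cdot,t)}(\rn)$ with $p_2>q(\fai)$), and apply Propositions \ref{pro radial mc} and \ref{pro interpolation}(i). The only cosmetic difference is that the paper cites \cite[Theorem 1.1]{Ky11} for the $H^{p_1}_w$-boundedness of $R_j$ while you cite \cite{G79,S-T89}; your explicit remarks on density, uniformity in $t$, and avoidance of circularity with Proposition \ref{pro MOHtoMOHVH} are sound and simply make precise what the paper leaves implicit.
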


\begin{proof}
Let $\phi\in \mathcal{S}(\rn)$ satisfy $\int_\rn \phi(x)\,dx=1$. For all
$j\in\{1,\,\ldots,\,n\}$, let $T_j:=\mathcal{M}_\phi\circ R_j$,
where $\mathcal{M}_\phi$ is as in \eqref{eqn radial mf}.
Using Proposition \ref{pro radial mc} and the fact that, for all $p\in(0,\,1]$ and
$w\in A_\fz(\rn)$, $R_j$ is bounded on the weighted Hardy space $H^p_w(\rn)$
(see \cite[Theorem 1.1]{Ky11}),
we conclude that $T_j$ is bounded from $H^p_w(\rn)$
to $L^p_w(\rn)$. In particular, let $p_1\in(0,\,i(\fai))$, since, for all $t\in(0,\,\fz)$,
$\fai(\cdot,\,t)\in A_\fz(\rn)$, we know that $T_j$ is bounded from
$H^{p_1}_{\fai(\cdot,\,t)}(\rn)$ to $L^{p_1}_{\fai(\cdot,\,t)}(\rn)$.

On the other hand, let $q(\fai)$ be as in \eqref{1.7} and $p_2\in(q(\fai),\,\fz)$.
From \cite[p.\,411, Theorem 3.1]{GR85}, we deduce that,
for all $w\in A_{p_2}(\rn)$, $R_j$ is bounded on the weighted Lebesgue space
$L^{p_2}_w(\rn)$. Since $\fai(\cdot,\,t)\in A_{p_2}(\rn)$,
we know $R_j$ is bounded on $L^{p_2}_{\fai(\cdot,\,t)}(\rn)$,
which, together with the boundedness of $\mathcal{M}_\phi$ on
$L^{p_2}_{\fai(\cdot,\,t)}(\rn)$, implies that $T_j$ is bounded on
$L^{p_2}_{\fai(\cdot,\,t)}(\rn)$.
Hence, using Propositions \ref{pro radial mc} and \ref{pro interpolation}(i),
we conclude
\begin{eqnarray*}
\|R_j(f)\|_{H_\fai(\rn)}\sim \|T_j(f)\|_{L^\fai(\rn)}\ls \|f\|_{H_\fai(\rn)},
\end{eqnarray*}
which completes the proof of Corollary \ref{cor bd of Riesz}.
\end{proof}

We point out that Proposition \ref{pro interpolation} can also be applied
to the boundedness of Calder\'on-Zygmund operators on $H_\fai(\rn)$.
Recall the following notion of $\tz$-Calder\'on-Zygmund operators
from Yabuta \cite{Ya85}. Let $\tz$ be a nonnegative nondecreasing function on
$(0,\,\fz)$ satisfying $\int_0^1\frac{\tz(t)}{t}\,dt < \fz$. A continuous function
$K: (\rn\times \rn) \setminus \{(x,x) : x \in\rn\} \to \cc$ is called a
$\tz$-\emph{Calder\'on-Zygmund kernel},
if there exists a positive constant $C$ such that, for all $x,\,y\in\rn$ with $x\ne y$,
\begin{eqnarray*}
|K(x,y)|\le \frac{C}{|x - y|^n}
\end{eqnarray*}
and, for all $x,\,x',\,y\in\rn$ with $2|x-x'|<|x-y|$,
\begin{eqnarray*}
|K(x,y)- K(x',\,y)| + |K(y,x) - K(y,x')|\le
\frac{C}{|x -y|^n}\, \tz\lf(\frac{|x-x'|}{|x-y|}\r).
\end{eqnarray*}
A linear operator $T : \mathcal{S}(\rn) \to\mathcal{S}'(\rn)$ is called a
$\tz$-\emph{Calder\'on-Zygmund operator}, if T can be extended to a
bounded linear operator
on $L^2(\rn)$ and there exists a $\tz$-Calder\'on-Zygmund kernel $K$
such that, for all $f\in C_c^\fz(\rn)$ and $x \notin \supp f$,
\begin{eqnarray*}
T f(x)=\dint_\rn K(x,\,y)f(y)\,dy.
\end{eqnarray*}

Recall also that a nonnegative locally integrable function
 $w$ on $\rn$ is said to satisfy the \emph{reverse H\"older condition} for some
$q\in(1,\fz)$, denoted by $w\in \mathrm{RH}_q(\rn)$, if there exists a positive
constant $C$ such that, for all balls $B\subset \rn$,
\begin{eqnarray*}
\lf\{\frac{1}{|B|}\int_B [w(x)]^q\,dx\r\}^{1/q}\le \frac{C}{|B|}\int_B
w(x)\,dx.
\end{eqnarray*}

\begin{corollary}\label{cor bd of CZ}
Let $\dz\in(0,\,1]$, the function $\fai$ satisfy Assumption $(\fai)$,
$q\in[1,\,\frac{i(\fai)(n+\dz)}{n})$, $r\in (\frac{n+\dz}{n+\dz-nq},\,\fz)$ and, for all
$t\in(0,\,\fz)$, $\fai(\cdot,\,t)\in A_q(\rn)\cap \mathrm{RH}_r(\rn)$,
where $i(\fai)$ and $q(\fai)$ are as in \eqref{1.6}
and \eqref{1.7}, respectively.
Assume also that $\tz$ is a nondecreasing function on $[0,\,\fz)$ satisfying
$\int_0^\fz \frac{\tz(t)}{t^{1+\dz}}\,dt<\fz$. If $T$ is a $\tz$-Caldr\'on-Zygmund operator
satisfying $T^*1=0$, namely,  for all $f\in L^\fz(\rn)$ with compact support and
$\int_\rn f(x)\,dx=0$,
\begin{eqnarray*}
\dint_\rn Tf(x)\,dx=0,
\end{eqnarray*}
then $T$ is bounded on $H_\fai(\rn)$.
\end{corollary}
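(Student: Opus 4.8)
The plan is to reduce the assertion, via the radial maximal function characterization of $H_\fai(\rn)$ (Proposition \ref{pro radial mc}) and the interpolation result (Proposition \ref{pro interpolation}), to two weighted endpoint estimates, one of Hardy type and one of Lebesgue type. First I would fix $\phi\in\mathcal{S}(\rn)$ with $\int_\rn\phi(x)\,dx=1$ and set $\widetilde{T}:=\mathcal{M}_\phi\circ T$, with $\mathcal{M}_\phi$ as in \eqref{eqn radial mf}; then $\widetilde{T}$ is a nonnegative sublinear operator, and, since $T$ is bounded on $L^2(\rn)$ and $\mathcal{M}_\phi(g)\ls\mathcal{M}(g)$ pointwise, $\widetilde{T}$ is of weak type $(L^2(\rn),\,L^2(\rn))$. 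For $f\in H_\fai(\rn)\cap L^2(\rn)$, Proposition \ref{pro radial mc} gives $\|Tf\|_{H_\fai(\rn)}\sim\|\widetilde{T}f\|_{L^\fai(\rn)}$, so, by the density of $H_\fai(\rn)\cap L^2(\rn)$ in $H_\fai(\rn)$, it suffices to show that $\widetilde{T}$ is bounded from $H_\fai(\rn)$ to $L^\fai(\rn)$. Since $\fai(x,\cdot)$ is of upper type $1$, we have $I(\fai)\le 1$; hence, by Proposition \ref{pro interpolation}(i), it is enough to produce $p_1\in(0,\,i(\fai))$ and $p_2\in(\max\{1,\,q(\fai)\},\,\fz)$ such that, uniformly in $t\in(0,\,\fz)$, $\widetilde{T}$ is of weak type $(H^{p_1}_{\fai(\cdot,\,t)}(\rn),\,L^{p_1}_{\fai(\cdot,\,t)}(\rn))$ and of weak type $(L^{p_2}_{\fai(\cdot,\,t)}(\rn),\,L^{p_2}_{\fai(\cdot,\,t)}(\rn))$.

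For the Lebesgue endpoint, I would take any $p_2\in(\max\{1,\,q(\fai)\},\,\fz)$. Then $\fai(\cdot,\,t)\in A_{p_2}(\rn)$ for every $t$, with $A_{p_2}$-constant independent of $t$ by \eqref{1.7}, so the classical weighted norm inequality for Calder\'on-Zygmund operators (see, e.g., \cite[p.\,411, Theorem 3.1]{GR85}) applied to the $L^2$-bounded operator $T$ with its $\tz$-Calder\'on-Zygmund kernel yields that $T$ is bounded on $L^{p_2}_{\fai(\cdot,\,t)}(\rn)$ uniformly in $t$; combined with $\mathcal{M}_\phi(g)\ls\mathcal{M}(g)$ and the boundedness of $\mathcal{M}$ on $L^{p_2}_{\fai(\cdot,\,t)}(\rn)$, this shows that $\widetilde{T}$ is bounded on $L^{p_2}_{\fai(\cdot,\,t)}(\rn)$ uniformly in $t$, which is more than the required weak-type bound.

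The Hardy endpoint is where the hypotheses on $\dz$, $q$ and $r$ enter. From $q<\frac{i(\fai)(n+\dz)}{n}$ and $r>\frac{n+\dz}{n+\dz-nq}$, I would choose $p_1\in(\frac{nq}{n+\dz},\,i(\fai))$ sufficiently close to $i(\fai)$ that, for all $t$, $\fai(\cdot,\,t)$ still lies in $A_{p_1(n+\dz)/n}(\rn)\cap\mathrm{RH}_r(\rn)$ with constants controlled independently of $t$; this is exactly the admissibility relation, among $p_1$, $\dz$ and the weight, under which the known boundedness of $\tz$-Calder\'on-Zygmund operators satisfying $T^*1=0$ on weighted Hardy spaces applies (there the decay $\int_0^\fz\tz(t)t^{-1-\dz}\,dt<\fz$ makes the image of a weighted $(p_1,\,q,\,s)$-atom a constant multiple of a molecule, while $T^*1=0$ provides the vanishing-moment condition), and hence $T$ is bounded on $H^{p_1}_{\fai(\cdot,\,t)}(\rn)$ uniformly in $t$. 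Then, applying Proposition \ref{pro radial mc} to the Musielak-Orlicz function $(x,\,s)\mapsto\fai(x,\,t)\,s^{p_1}$, which satisfies Assumption $(\fai)$ because $p_1\in(0,\,1)$ and $\fai(\cdot,\,t)\in A_\fz(\rn)$, the radial maximal function characterizes $H^{p_1}_{\fai(\cdot,\,t)}(\rn)$, so that $\|\widetilde{T}g\|_{L^{p_1}_{\fai(\cdot,\,t)}(\rn)}=\|\mathcal{M}_\phi(Tg)\|_{L^{p_1}_{\fai(\cdot,\,t)}(\rn)}\sim\|Tg\|_{H^{p_1}_{\fai(\cdot,\,t)}(\rn)}\ls\|g\|_{H^{p_1}_{\fai(\cdot,\,t)}(\rn)}$ uniformly in $t$. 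With both endpoint estimates established, Proposition \ref{pro interpolation}(i) completes the proof.

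The hard part will be this last step: verifying that the range of $p_1$ dictated simultaneously by $p_1<i(\fai)$, by the membership $\fai(\cdot,\,t)\in A_q(\rn)\cap\mathrm{RH}_r(\rn)$, and by the kernel regularity $\dz$ is nonempty (this is precisely what the hypotheses $q\in[1,\,i(\fai)(n+\dz)/n)$ and $r\in(\frac{n+\dz}{n+\dz-nq},\,\fz)$ are designed to guarantee), and then checking that these parameters fit the precise hypotheses of the weighted-Hardy-space boundedness theorem for $\tz$-Calder\'on-Zygmund operators; once that bookkeeping is arranged, the remaining steps follow routinely from Propositions \ref{pro radial mc} and \ref{pro interpolation} together with classical weighted theory.
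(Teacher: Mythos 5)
Your proposal is correct and follows essentially the same route as the paper: pass to $\mathcal{M}_\phi\circ T$, obtain the Hardy endpoint from the known boundedness of $\tz$-Calder\'on-Zygmund operators with $T^*1=0$ on weighted Hardy spaces (Ky's theorem), obtain the Lebesgue endpoint from classical weighted $L^p$ bounds, and conclude via Propositions \ref{pro radial mc} and \ref{pro interpolation}(i). If anything, you are slightly more careful than the paper in pinning down the admissible range $p_1\in(\frac{nq}{n+\dz},\,i(\fai))$ and in noting the weak $(L^2,\,L^2)$ hypothesis needed for the interpolation machinery.
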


\begin{proof}
To prove Corollary \ref{cor bd of CZ}, recall, in \cite[Theorem 1.2]{Ky11}, that
Ky proved that, for all $\dz\in(0,\,1]$, $p_1\in(\frac{n}{n+\dz},\,1]$,
$q\in[1,\,\frac{p_1(n+\dz)}{n})$,
$r\in(\frac{n+\dz}{n+\dz-nq},\,\fz)$ and $w\in A_q(\rn)\cap \mathrm{RH}_r(\rn)$,
the $\tz$-Calder\'on-Zygmund operator $T$, with $\tz$ satisfying the same
assumptions as in this corollary, is bounded on the weighted Hardy space
$H_w^{p_1}(\rn)$,  if $T^*1= 0$. In particular, let $p_1\in(\frac{n}{n+\dz},\,i(\fai))$, we know
$q\in[1,\,\frac{i(\fai)(n+\dz)}{n})$ and $r\in (\frac{n+\dz}{n+\dz-nq},\,\fz)$. Thus,
for all $t\in(0,\,\fz)$, $\fai(\cdot,\,t)\in  A_q(\rn)\cap \mathrm{RH}_r(\rn)$
and hence $T$ is bounded on $H^{p_1}_{\fai(\cdot,\,t)}(\rn)$, if $T^*1= 0$.

On the other hand, let $q(\fai)$ be as in \eqref{1.7}. From \cite[Theorem 2.4]{Ya85},
we deduce that, for all $p_2\in(q(\fai),\,\fz)$ and $w\in A_{p_2}(\rn)$,  $T$ is bounded on $L_w^{p_2}(\rn)$. Since, $p_2>q(\fai)$, we know that, for all $t\in(0,\,\fz)$,
$\fai(\cdot,\,t)\in  A_{p_2}(\rn)$. Thus, $T$ is bounded on $L_{\fai(\cdot,\,t)}^{p_2}(\rn)$.
Moreover, let $\mathcal{M}_\phi$ be as in \eqref{eqn radial mf} and
$S:=\mathcal{M}_\phi\circ T$. Using Proposition \ref{pro radial mc} and
the boundedness of $\mathcal{M}_\phi$ on $L^{p_2}_{\fai(\cdot,\,t)}(\rn)$,
we conclude that, for all $t\in(0,\,\fz)$,
$S$ is bounded from
$H^{p_1}_{\fai(\cdot,\,t)}(\rn)$ to $L^{p_1}_{\fai(\cdot,\,t)}(\rn)$ and
bounded on $L^{p_2}_{\fai(\cdot,\,t)}(\rn)$.
By Proposition \ref{pro interpolation}(i), we know
that $S$ is bounded from $H_\fai(\rn)$ to $L^\fai(\rn)$. This, together with Proposition
\ref{pro radial mc}, implies that $T$ is bounded on $H_\fai(\rn)$, which completes the proof
of Corollary \ref{cor bd of CZ}.
\end{proof}

\begin{remark}\label{rem bd of many CO}
We point out that it is well known that many important operators are bounded
on weighted Hardy spaces. Thus, by using Proposition \ref{pro interpolation}, we can
obtain their corresponding boundedness on $H_\fai(\rn)$.
\end{remark}

We now turn to the proof of Theorem \ref{thm1}.

\begin{proof}[Proof of Theorem \ref{thm1}]
We prove Theorem \ref{thm1} by showing that
\begin{eqnarray}\label{eqn est1 thm1}
\lf(H_\fai(\rn)\cap L^2(\rn)\r)=\mathbb{H}_{\fai,\,\mathrm{Riesz}}(\rn)
\end{eqnarray}
with equivalent quasi-norms.

We first show the inclusion that $(H_\fai(\rn)\cap L^2(\rn))
\subset \mathbb{H}_{\fai,\,\mathrm{Riesz}}(\rn)$.
Let $f\in H_\fai(\rn)\cap L^2(\rn)$ and $\phi\in \mathcal{S}(\rn)$
satisfy \eqref{eqn integral=1}.
By Proposition \ref{pro radial mc} and Corollary
\ref{cor bd of Riesz}, we see that
\begin{eqnarray}\label{3.x2}
\lf\|f\r\|_{{H}_{\fai,\,\mathrm{Riesz}}(\rn)}&&=\lf\|f\r\|_{L^\fai(\rn)}
+\dsum_{j=1}^n\lf\|R_j(f)\r\|_{L^\fai(\rn)}\\
&&\nonumber\le \lf\|\mathcal{M}_\phi(f)\r\|_{L^\fai(\rn)}
+\dsum_{j=1}^n\lf\|\mathcal{M}_\phi(R_j(f))\r\|_{L^\fai(\rn)}\\
&&\nonumber\sim \lf\|f\r\|_{H_\fai(\rn)}
+\dsum_{j=1}^n\lf\|R_j(f)\r\|_{H_\fai(\rn)}\ls \lf\|f\r\|_{H_\fai(\rn)},
\end{eqnarray}
where $\mathcal{M}_\phi$ denotes the radial maximal function as in
\eqref{eqn radial mf}.
This implies that $f\in \mathbb{H}_{\fai,\,\mathrm{Riesz}}(\rn)$ and hence the
inclusion $(H_\fai(\rn)\cap L^2(\rn)) \subset
\mathbb{H}_{\fai,\,\mathrm{Riesz}}(\rn)$ holds true.

We now turn to the proof of the inclusion $\mathbb{H}_{\fai,\,\mathrm{Riesz}}(\rn)
\subset (H_\fai(\rn)\cap L^2(\rn))$. Let $f\in \mathbb{H}_{\fai,\,\mathrm{Riesz}}(\rn)$.
For all $(x,\,t)\in\rr^{n+1}_+$, let
\begin{eqnarray*}
F(x,\,t)&&:=(u_0(x,\,t),\,u_1(x,\,t),\,\ldots,\,u_n(x,\,t))\\
&&:=\lf((f*P_t)(x),\, (f*Q^{1}_t)(x),\,\ldots,\,
(f*Q^{n}_t)(x)\r),
\end{eqnarray*}
where $P_t$ is the Poisson kernel as in \eqref{eqn PK} and, for all
$j\in\{1,\,\ldots,\,n\}$, $Q^{(j)}_t$ is the conjugant Poisson kernel
as in \eqref{eqn CPK}. From $f\in L^2(\rn)$ and \cite{Ho53}
(see also \cite[p.\,78, 4.4]{St70}), we deduce that
$F$ satisfies the generalized Cauchy-Riemann equation \eqref{eqn GCR equation}.
Thus, we know that, for $q\in[\frac{n-1}{n},\,\frac{i(\fai)}{q(\fai)})$,
$|F|^{q}$ is subharmonic (see, for example,
\cite[p.\,234, Theorem 4.14]{SW71}).
Moreover, by \cite[p.\,80, Theorem 4.6]{SW71},
we obtain the following harmonic majorant that, for all $(x,\,t)\in\rr^{n+1}_+$,
\begin{eqnarray*}
\lf|F(x,\,t)\r|^q\le \lf(\lf|F(\cdot,\,0)\r|^q*P_t\r)(x),
\end{eqnarray*}
where $F(\cdot,\,0)=\{f,\,R_1(f),\,\ldots,\,R_n(f)\}$
via the Fourier transform.
Thus, it follows, from \eqref{eqn scaling of MOF} and Lemma \ref{lem bd HLMF}, that
\begin{eqnarray*}
\dsup_{t\in(0,\,\fz)} \lf\|\lf|F(\cdot,\,t)\r|\r\|_{L^\fai(\rn)} &&=\dsup_{t\in(0,\,\fz)} \lf\|\lf|F(\cdot,\,t)\r|^q\r\|^{1/q}_{L^{\fai_q}(\rn)} \le \dsup_{t\in(0,\,\fz)} \lf\|\mathcal{M}\lf(\lf|F(\cdot,\,0)\r|^q\r)\r\|^{1/q}_{L^{\fai_q}(\rn)}\\
&&\ls \dsup_{t\in(0,\,\fz)} \lf\|\lf|F(\cdot,\,0)\r|\r\|_{L^{\fai}(\rn)}\ls
\lf\|f\r\|_{L^\fai(\rn)}+\dsum_{j=1}^n\lf\|R_j(f)\r\|_{L^\fai(\rn)}\\
&&\sim \|f\|_{H_{\mathbb{H}_{\fai,\,\mathrm{Riesz}}(\rn)}},
\end{eqnarray*}
where $\fai_q$ is as in \eqref{2.y2} and $\mathcal{M}$ the Hardy-Littlewood
maximal function as in \eqref{HL maxiamal function}.
Thus, $F\in \mathcal{H}_\fai(\rr^{n+1}_+)$ and
\begin{eqnarray*}
\|F\|_{\mathcal{H}_\fai(\rr^{n+1}_+)}
\ls \|f\|_{H_{\mathbb{H}_{\fai,\,\mathrm{Riesz}}(\rn)}}.
\end{eqnarray*}
Moreover, from $f\in L^2(\rn)$ and
\cite[Theorem 4.17(i)]{SW71}, we further deduce $F\in \mathcal{H}^2(\rr^{n+1}_+)$,
which, together with $F\in \mathcal{H}_\fai(\rr^{n+1}_+)$ and
Theorem \ref{cor equiv 3spaces},  further implies that
$f\in H_\fai(\rn)$ and
$\|f\|_{{H}_\fai(\rn)}\ls \|f\|_{H_{\mathbb{H}_{\fai,\,\mathrm{Riesz}}(\rn)}}$.
Thus, $f\in H_\fai(\rn)\cap L^2(\rn)$,
which completes the proof of Theorem \ref{thm1}.
\end{proof}

\section{Higher order Riesz transform characterizations}
\label{s3}

\hskip\parindent
In this section, we give out the proofs of Theorems \ref{thm2} and \ref{thm3}.
First, we introduce the Musielak-Orlicz-Hardy spaces
$\mathcal{H}_{\fai,\,m}(\rr^{n+1}_+)$ of tensor-valued functions of rank $m$ with
$m\in\nn$.

Let $n,\,m\in\nn$ and $\{e_0,\,e_1,\,\ldots,\,e_n\}$ be an orthonormal basis
of $\rr^{n+1}$. The \emph{tensor product  of $m$ copies
of} $\rr^{n+1}$ is defined to be the set
\begin{eqnarray*}
\bigotimes^m \rr^{n+1}:=\lf\{F:=\dsum_{j_1,\,\ldots,\,j_m=0}^nF_{j_1,\,\ldots,\,j_m}\,
e_{j_1}\otimes \cdots \otimes e_{j_m}:\  \ F_{j_1,\,\ldots,\,j_m}\in\cc\r\},
\end{eqnarray*}
where $e_{j_1}\otimes \cdots \otimes e_{j_m}$ denotes the \emph{tensor product} of
$e_{j_1},\, \ldots,\,e_{j_m}$ and  each $F\in \bigotimes\limits^m \rr^{n+1}$ is called
a \emph{tensor of rank} $m$.

Let $F:\ \rr^{n+1}_+\to \bigotimes\limits^m \rr^{n+1}$ be a
tensor-valued function of rank $m$ of the form that, for all $(x,\,t)\in\rr^{n+1}_+$,
\begin{eqnarray}\label{3.x1}
F(x,\,t)=\dsum_{j_1,\,\ldots,\,j_m=0}^nF_{j_1,\,\ldots,\,j_m}(x,\,t)\,
e_{j_1}\otimes \cdots \otimes e_{j_m}
\end{eqnarray}
with $F_{j_1,\,\ldots,\,j_m}(x,\,t)\in\cc$. Then the tensor-valued function 
$F$ of rank $m$ is said to be \emph{symmetric},
if, for any permutation $\sz$ on $\{1,\,\ldots,\,m\}$,
$j_1,\,\ldots,\,j_m\in\{0,\,\dots,\,n\}$ and $(x,\,t)\in\rr^{n+1}_+$,
\begin{eqnarray*}
F_{j_1,\,\ldots,\,j_m}(x,\,t)=F_{j_{\sz(1)},\,\ldots,\,j_{\sz(m)}}(x,\,t).
\end{eqnarray*}
For $F$ being symmetric, $F$ is said to be of \emph{trace zero} if, for all
$j_3,\,\ldots,\,j_m\in\{0,\,\dots,\,n\}$ and $(x,\,t)\in\rr^{n+1}_+$,
\begin{eqnarray*}
\dsum_{j=0}^n F_{j,\,j,\,j_3,\,\ldots,\,j_m}(x,\,t)\equiv 0.
\end{eqnarray*}

Let $F$ be as in \eqref{3.x1}. Its \emph{gradient} $\nabla F: \ \rr^{n+1}_+\to
\bigotimes\limits^{m+1} \rr^{n+1}$
is a tensor-valued function of rank $m+1$ of the form that, for all $(x,\,t)\in\rr^{n+1}_+$,
\begin{eqnarray*}
\nabla F(x,\,t)&&=\dsum_{j=0}^n \frac{\pat F}{\pat x_j}(x,\,t)\otimes e_j\\
&&=\dsum_{j=0}^n  \dsum_{j_1,\,\ldots,\,j_m=0}^n\frac{\pat F_{j_1,\,\ldots,\,j_m}}
{\pat x_j}(x,\,t)\, e_{j_1}\otimes \cdots \otimes e_{j_m}\otimes e_j,
\end{eqnarray*}
here and hereafter, we always let $x_0:=t$. A tensor-valued function
$F$ is said to satisfy the \emph{generalized Cauchy-Riemann equation},
if both $F$ and $\nabla F$ are symmetric and of trace zero. We point out that, if
$m=1$, this definition of generalized Cauchy-Riemann equations is equivalent to
the generalized Cauchy-Riemann equation as in \eqref{eqn GCR equation}.
For more details on the generalized Cauchy-Riemann equation on tensor-valued
functions, we refer the reader to \cite{SW68,PS08}.

The following is a generalization of Musielak-Orlicz-Hardy spaces
$\mathcal{H}_\fai(\rr^{n+1}_+)$ of harmonic vectors
defined in Definition \ref{def MOHVH space}.

\begin{definition}\label{def MOTVFH}
Let $m\in\nn$ and $\fai$ satisfy Assumption $(\fai)$.
The \emph{Musielak-Orlicz-Hardy space} $\mathcal{H}_{\fai,\,m}(\rr^{n+1}_+)$
\emph{of tensor-valued functions of rank} $m$ is defined to be the set of all tensor-valued
functions $F$, of rank $m$, satisfying the generalized Cauchy-Riemann equation.
For any $F\in\mathcal{H}_{\fai,\,m}(\rr^{n+1}_+)$, its \emph{quasi-norm} is defined by
\begin{eqnarray*}
\|F\|_{\mathcal{H}_{\fai,\,m}(\rr^{n+1}_+)}:=
\dsup_{t>0}\lf\|\lf|F(\cdot,\,t)\r|\r\|_{L^\fai(\rn)},
\end{eqnarray*}
where, for all $(x,\,t)\in\rr^{n+1}_+$,
\begin{eqnarray*}
|F(x,\,t)|:=\lf\{\dsum_{j_1,\,\ldots,\,j_m=0}^n \lf|{F_{j_1,\,\ldots,\,j_m}}
(x,\,t)\r|^2\r\}^{1/2}.
\end{eqnarray*}
\end{definition}

Moreover, Stein and Weiss \cite{SW68} proved the following result.

\begin{proposition}[\cite{SW68}]\label{pro SHP of TVF}
Let $m\in\nn$ and $F$ be a tensor-valued
functions of rank $m$ satisfying the generalized Cauchy-Riemann equation.
Then, for all $p\in[\frac{n-1}{n+m-1},\,\fz)$, $|F|^p$ is subharmonic on $\rr^{n+1}_+$.
\end{proposition}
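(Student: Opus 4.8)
\textbf{Proof proposal for Proposition \ref{pro SHP of TVF}.}

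The plan is to reduce the subharmonicity of $|F|^p$ to a pointwise differential inequality of the form
\begin{eqnarray*}
|F|\,\bdz_{(x,\,t)}|F|+(p-1)\lf|\nabla|F|\r|^2\ge 0
\end{eqnarray*}
valid away from the zero set of $|F|$, together with the standard fact that a nonnegative continuous function which is subharmonic off the interior of its zero set (and whose zero set has the expected structure) is subharmonic on all of $\rr^{n+1}_+$. Since $p$ may be smaller than $1$, the term $(p-1)|\nabla|F||^2$ is negative, so the real content is a \emph{reverse} Cauchy-Schwarz type inequality: one must show that the ``radial part'' $|\nabla|F||^2$ of the full gradient $|\nabla F|^2$ is bounded by a definite fraction of $|\nabla F|^2$, precisely
\begin{eqnarray}\label{pfplan:gradineq}
\lf|\nabla|F|\r|^2\le \lf(1-\frac{1}{n+m-1}\r)|\nabla F|^2\qquad\text{wherever }|F|\ne 0.
\end{eqnarray}
Granting \eqref{pfplan:gradineq}, one computes directly that $\bdz_{(x,\,t)}|F|^p=p|F|^{p-1}\bdz|F|+p(p-1)|F|^{p-2}|\nabla|F||^2$, and since each component $F_{j_1,\,\ldots,\,j_m}$ is harmonic (this follows by differentiating the generalized Cauchy-Riemann equations, exactly as in the rank-one case) one has $|F|\bdz|F|=|\nabla F|^2-|\nabla|F||^2$; combining these with \eqref{pfplan:gradineq} and $p\ge\frac{n-1}{n+m-1}$, i.e. $p-1\ge -\frac{n}{n+m-1}\ge -\frac{1}{1-(1-\frac1{n+m-1})^{-1}\cdot\ldots}$... more cleanly, one checks that $\bdz|F|^p\ge p|F|^{p-2}\{|\nabla F|^2-|\nabla|F||^2+(p-1)|\nabla|F||^2\}\ge p|F|^{p-2}|\nabla F|^2\{1-(1-(p-1))(1-\frac{1}{n+m-1})\}\cdot(\le\ge)$, and the numerical constraint $p\ge\frac{n-1}{n+m-1}$ is exactly what makes this bracket nonnegative. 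So the scheme is: (1) verify harmonicity of the components; (2) prove the algebraic gradient inequality \eqref{pfplan:gradineq}; (3) assemble the Laplacian computation and handle the zero set.

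The heart of the matter, and the step I expect to be the main obstacle, is Step (2): the algebraic inequality \eqref{pfplan:gradineq}, which is a statement purely about the symmetric, trace-zero structure of $F$ and $\nabla F$ at a single point. The idea is that at a point where $F\ne 0$ we may, using the rotational covariance of the generalized Cauchy-Riemann system under $O(n+1)$ acting on the index set $\{0,\,1,\,\ldots,\,n\}$, rotate coordinates so that $F$ points ``in the $e_0$ direction'' in a suitable sense; then $\nabla|F|=|F|^{-1}\sum_{j}\langle F,\partial_{x_j}F\rangle e_j$, and the symmetry and trace-zero conditions on the rank-$(m+1)$ tensor $\nabla F$ force the ``longitudinal'' components (those contracting against the direction of $F$) to occupy only a codimension-$(n+m-1)$-proportioned piece of the space of all components of $\nabla F$. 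Quantitatively, if $F$ has rank $m$ and is symmetric trace-zero, one shows that for a symmetric trace-zero $G=\nabla F$ of rank $m+1$, the contraction of $G$ with a fixed unit tensor built from $F$ has norm at most $\sqrt{1-\frac{1}{n+m-1}}\,|G|$. This is precisely the Stein–Weiss computation; I would either reproduce it via a representation-theoretic decomposition of $\bigotimes^{m+1}\rr^{n+1}$ into its symmetric-trace-zero and complementary parts, or — as Stein–Weiss do — by a direct but delicate index manipulation exploiting that $\sum_j G_{j,\,j,\,j_3,\ldots}=0$. The case $m=1$ recovers the classical constant $1-\frac1n$ (equivalently the subharmonicity threshold $\frac{n-1}{n}$), which is the sanity check.

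Finally, for Step (3): $|F|$ is continuous on $\rr^{n+1}_+$ and, by the above, $C^\infty$ and subharmonic on the open set $\{|F|\ne 0\}$; on the interior of $\{|F|=0\}$ it is identically $0$, hence trivially subharmonic; and the remaining boundary points of the zero set form a set across which the sub-mean-value inequality passes to the limit by continuity (a routine argument: test the mean-value inequality on small spheres and use that $|F|\ge 0$ with $|F|=0$ at the center). Thus $|F|^p$, as a nondecreasing convex-on-$[0,\infty)$-composed-with-subharmonic... — more carefully, since $s\mapsto s^p$ is \emph{not} convex for $p<1$ we cannot use the ``increasing convex function of a subharmonic function'' principle and genuinely need the direct Laplacian computation above, which is why Step (2) cannot be bypassed. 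Once \eqref{pfplan:gradineq} is in hand the rest is bookkeeping, so essentially all the work — and all the dependence on the exponent $\frac{n-1}{n+m-1}$ — is concentrated in the pointwise gradient estimate; I would cite \cite{SW68} for that estimate and merely indicate the rotation-to-normal-form reduction, since the excerpt already quotes Stein–Weiss for exactly this proposition.
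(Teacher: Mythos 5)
Your overall scheme is exactly the Stein--Weiss route: reduce to the pointwise identity
$$\Delta\lf(|F|^p\r)=p|F|^{p-2}\lf[\,|\nabla F|^2-(2-p)\lf|\nabla|F|\r|^2\,\r]\qquad\text{on }\{|F|\ne 0\},$$
prove a reverse Cauchy--Schwarz bound on $|\nabla|F||^2/|\nabla F|^2$ using the symmetric trace-zero structure of $\nabla F$, and then patch across the zero set by continuity. The paper itself offers no proof here (it simply cites \cite{SW68}), and since $s\mapsto s^p$ is not convex for $p<1$, you are right that one cannot sidestep the Laplacian computation; Step (2) really is the whole ballgame.

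The genuine gap is that your key inequality \eqref{pfplan:gradineq} carries the wrong constant, and your own sanity check at $m=1$, if carried out, would expose it. From the displayed identity above, nonnegativity of $\Delta(|F|^p)$ for $p=\frac{n-1}{n+m-1}$ requires
$$\lf|\nabla|F|\r|^2\le\frac{1}{2-p}\,|\nabla F|^2=\frac{n+m-1}{n+2m-1}\,|\nabla F|^2,$$
not $\bigl(1-\frac{1}{n+m-1}\bigr)|\nabla F|^2=\frac{n+m-2}{n+m-1}|\nabla F|^2$. At $m=1$ the correct constant is $\frac{n}{n+1}$ (work in $\mathbb{R}^{n+1}$, $N:=n+1$; rotating so $F=|F|e_1$ one has $|\nabla|F||^2=\sum_j M_{j1}^2$ where $M=(\partial_ju_k)$ is symmetric trace-zero, and the extremal $M$ is diagonal with $M_{11}=a$, $M_{jj}=-a/(N-1)$ for $j\ne1$, giving the ratio $\frac{N-1}{N}=\frac{n}{n+1}$). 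You instead wrote $\frac{n-1}{n}$, which is the \emph{threshold exponent} $p_0$, not the gradient constant $\frac{1}{2-p_0}$; the two coincide numerically in neither case, and conflating them is what makes your ``more cleanly'' bracket computation unravel (as its syntax already suggests). If you feed your constant $\frac{n+m-2}{n+m-1}$ back into the Laplacian identity, the resulting threshold would be $\frac{n+m-3}{n+m-2}$, not $\frac{n-1}{n+m-1}$, so the argument as written does not close. Everything else --- harmonicity of the components, the rotate-to-normal-form reduction, and the handling of the zero set --- is fine; fix the constant in \eqref{pfplan:gradineq} to $\frac{n+m-1}{n+2m-1}$ and redo the index count on the symmetric trace-zero rank-$(m+1)$ tensor to produce it.
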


Recall also the following result from Calder\'on and Zygmund \cite[Theorem 1]{CZ64}.

\begin{proposition}[\cite{CZ64}]\label{pro SHP of HGHF}
Let $m\in\nn$ and $u$ be a harmonic function on $\rr^{n+1}_+$. For all $p\in[\frac{n-1}
{n+m-1},\,\fz)$, $|\nabla^m u|^p$ is subharmonic. Here, for all $(x,\,t)\in\rr^{n+1}_+$,
\begin{eqnarray*}
\nabla^m u(x,\,t):=\lf\{\pat^{\az} u(x,\,t)\r\}_{|\az|=m}
\end{eqnarray*}
with $\az:=\{\az_0,\,\ldots,\,\az_n\}\in\zz_+^{n+1}$, $|\az|:=\sum_{j=0}^{n}
|\az_j|$, $x_0:=t$ and $\pat^{\az}:=(\frac{\pat}{\pat x_0})^{\az_0}\cdots
(\frac{\pat}{\pat x_n})^{\az_n}$.
\end{proposition}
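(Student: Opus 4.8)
The plan is to derive this from the Stein--Weiss subharmonicity result, Proposition \ref{pro SHP of TVF}, by observing that, when $u$ is harmonic, the full array of its $m$-th order derivatives is a tensor-valued function of rank $m$ satisfying the generalized Cauchy--Riemann equation. Concretely, given a harmonic $u$ on $\rr^{n+1}_+$ (so $(\bdz_x+\pat_t^2)u=0$), I would set, with the convention $x_0:=t$,
$$F(x,\,t):=\dsum_{j_1,\,\ldots,\,j_m=0}^n\lf(\pat_{x_{j_1}}\cdots\pat_{x_{j_m}}u\r)(x,\,t)\,e_{j_1}\otimes\cdots\otimes e_{j_m},$$
a rank-$m$ tensor-valued function, and check that it satisfies the generalized Cauchy--Riemann equation of Definition \ref{def MOTVFH}. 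Its symmetry is immediate since mixed partials commute; it is of trace zero because, for all $j_3,\,\ldots,\,j_m\in\{0,\,\ldots,\,n\}$,
$$\dsum_{j=0}^nF_{j,\,j,\,j_3,\,\ldots,\,j_m}=\pat_{x_{j_3}}\cdots\pat_{x_{j_m}}\lf((\bdz_x+\pat_t^2)u\r)=0;$$
and the very same two observations applied to $\nabla F=\nabla^{m+1}u$ show $\nabla F$ is symmetric and of trace zero as well. Hence Proposition \ref{pro SHP of TVF} applies to $F$ and yields that $|F|^p$ is subharmonic on $\rr^{n+1}_+$ for every $p\in[\frac{n-1}{n+m-1},\,\fz)$. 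Since $|F(x,\,t)|^2=\sum_{|\az|=m}\frac{m!}{\az!}|\pat^\az u(x,\,t)|^2$ is comparable to $|\nabla^m u(x,\,t)|^2=\sum_{|\az|=m}|\pat^\az u(x,\,t)|^2$ with constants depending only on $n$ and $m$, this gives Proposition \ref{pro SHP of HGHF} (and, for the precise normalization in the statement, one simply re-reads the computation below with the multiplicities $m!/\az!$ inserted).

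For a self-contained alternative I would compute the Laplacian by hand. Put $G:=(\pat^\az u)_{|\az|=m}$ and $v:=|G|^2$. Each entry of $G$ is harmonic, so $(\bdz_x+\pat_t^2)v=2\|\nabla G\|^2\ge0$ and $|\nabla v|^2=4\big|\sum_{|\az|=m}(\pat^\az u)\,\nabla(\pat^\az u)\big|^2$, whence a routine chain-rule identity gives, on the open set $\{v>0\}$ and for any $p\in(0,\,\fz)$,
$$(\bdz_x+\pat_t^2)\lf(v^{p/2}\r)=p\,v^{p/2-2}\lf[\,v\,\|\nabla G\|^2+(p-2)\Big|\dsum_{|\az|=m}(\pat^\az u)\,\nabla(\pat^\az u)\Big|^2\,\r].$$
For $p\ge2$ the bracket is manifestly nonnegative. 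For $p\in(0,\,2)$ the crux is the sharp algebraic inequality
$$\Big|\dsum_{|\az|=m}(\pat^\az u)\,\nabla(\pat^\az u)\Big|^2\le\frac{n+m-1}{n+2m-1}\,v\,\|\nabla G\|^2,$$
whose constant is precisely the one rendering the bracket nonnegative down to the endpoint $p=\frac{n-1}{n+m-1}$; it encodes the constraints of symmetry and trace zero satisfied by $\nabla^{m+1}u$, and for $m=1$ it reduces to the classical bound $|\mathrm{Hess}(u)\,\nabla u|^2\le\frac n{n+1}\|\mathrm{Hess}(u)\|^2|\nabla u|^2$, which follows from diagonalizing a symmetric trace-zero quadratic form. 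Finally, the zero set of $\nabla^m u$, where $v^{p/2}$ need not be $C^2$ when $p<2$, is handled by applying the estimate to $(v+\uc)^{p/2}$ and letting $\uc\to0^+$, or by the usual sub-mean-value-inequality argument.

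The main obstacle is the sharp algebraic inequality in the direct approach: obtaining the exact constant that produces the threshold $\frac{n-1}{n+m-1}$ --- rather than a worse one, such as the value $1$ coming from the crude Cauchy--Schwarz bound $|\sum_\az(\pat^\az u)\nabla(\pat^\az u)|^2\le v\,\|\nabla G\|^2$ --- forces one to use the full ``generalized gradient'' structure of $\nabla^m u$, namely that $\nabla^{m+1}u$ is symmetric and of trace zero, not merely that its entries are harmonic. In the reduction approach this difficulty is already absorbed into Proposition \ref{pro SHP of TVF}, so there only the elementary bookkeeping of identifying $\nabla^m u$ with a Cauchy--Riemann tensor (and reconciling the normalizations of $|\nabla^m u|$) remains.
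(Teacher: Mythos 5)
The paper itself supplies no proof of Proposition \ref{pro SHP of HGHF}: it is quoted verbatim from Calder\'on--Zygmund \cite[Theorem 1]{CZ64}. Your reduction to Proposition \ref{pro SHP of TVF} is the clean modern way to see this result, and it is also implicitly the way the paper relates the two statements (Remark \ref{rem about the ToG}(i) runs exactly this identification in the opposite direction, from Propositions \ref{pro SHP of HGHF} and \ref{pro Relation TVF&HG} back to a strengthening of Proposition \ref{pro SHP of TVF}). The verification that $F:=\nabla^m u$ is symmetric, of trace zero, with $\nabla F=\nabla^{m+1}u$ symmetric and of trace zero, is correct and complete.

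One point needs more care than you give it. Your step ``$|F|^2=\sum_{|\az|=m}\frac{m!}{\az!}|\pat^\az u|^2$ is comparable to $|\nabla^m u|^2=\sum_{|\az|=m}|\pat^\az u|^2$, hence the proposition follows'' is not valid as written: comparability of two nonnegative functions does \emph{not} transfer subharmonicity (if $g$ is subharmonic and $c_1g\le h\le c_2g$, $h$ may easily fail to be subharmonic), so that sentence proves nothing. The correct resolution is simpler: Definition \ref{def MOTVFH} takes $|F|$ to be the sum over \emph{ordered} tuples $(j_1,\ldots,j_m)$, and the way Proposition \ref{pro SHP of HGHF} is used in Remark \ref{rem about the ToG}(i) shows that $|\nabla^m u|$ there is to be read as exactly this tensor norm, i.e.\ $|\nabla^m u|=|F|$ identically, not merely comparably. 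Once one reads it this way, no normalization reconciliation is needed and Proposition \ref{pro SHP of TVF} applies directly; if instead one insists on the multi-index (unweighted) normalization, then the reduction to Proposition \ref{pro SHP of TVF} genuinely does not suffice and one would have to prove the algebraic inequality anew for the reweighted quadratic form.

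Your second, ``self-contained'' route correctly records the Bochner-type identity and the threshold constant $\frac{n+m-1}{n+2m-1}$, and the diagonalization argument for $m=1$ is right. But the sharp algebraic inequality $\bigl|\sum_{\az}(\pat^\az u)\nabla(\pat^\az u)\bigr|^2\le\frac{n+m-1}{n+2m-1}\,v\,\|\nabla G\|^2$ for general $m$ is asserted, not proved, and you yourself flag it as ``the main obstacle.'' That inequality is precisely what Stein--Weiss \cite{SW68} and Calder\'on--Zygmund \cite{CZ64} establish, so the second route is not an independent proof but a restatement of the problem. In short: the reduction route is correct and the right framework once the norm is read consistently with Remark \ref{rem about the ToG}(i); the direct route is an honest sketch but leaves the hard inequality untouched.
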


It is known that every harmonic vector satisfying the generalized Cauchy-Riemann
equation \eqref{eqn GCR equation} is a gradient of a harmonic function on $\rr^{n+1}_+$.
A similar result still holds true for tensor-valued functions, which is the following
proposition.

\begin{proposition}[\cite{SW71,Uc01}]\label{pro Relation TVF&HG}
Let $m\in\nn$ with $m\ge 2$, $F$ be a tensor-valued
function of rank $m$ satisfying that both $F$ and $\nabla F$ are symmetric,
and $F$ is of trace zero. Then there exists a harmonic function $u$ on $\rr^{n+1}_+$
such that $\nabla^mu=F$, namely, for all $\{j_1,\,\ldots,\,j_m\}\subset\{0,\,1,\,\ldots,\,
n\}$ and $(x,\,t)\in\rr^{n+1}_+$,
\begin{eqnarray*}
\frac{\pat}{\pat x_{j_1}}\cdots \frac{\pat}{\pat x_{j_m}} u(x,\,t)
=F_{j_1,\,\cdots,\,j_m}(x,\,t).
\end{eqnarray*}
\end{proposition}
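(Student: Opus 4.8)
The statement to prove is Proposition \ref{pro Relation TVF&HG}, attributed to Stein--Weiss and Uchiyama: given a tensor-valued function $F$ of rank $m\ge2$ such that $F$ and $\nabla F$ are symmetric and $F$ is of trace zero, there exists a harmonic function $u$ on $\rr^{n+1}_+$ with $\nabla^m u=F$.

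\medskip

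The plan is to recover $u$ by successive integration along paths in the simply connected domain $\rr^{n+1}_+$, using the symmetry of $F$ to guarantee that the relevant differential forms are closed. First I would observe that, since $F$ is symmetric of rank $m\ge2$, I can write $F=\nabla G$ for a suitable tensor-valued function $G$ of rank $m-1$: fixing a base point, define $G_{j_1,\dots,j_{m-1}}(x,t):=\int_{\gamma} \sum_{j=0}^n F_{j_1,\dots,j_{m-1},j}\,dx_j$ along any path $\gamma$ from the base point to $(x,t)$. The symmetry of $\nabla F$ — equivalently, the symmetry of $F$ together with the equality of mixed partials — is exactly the closedness condition $\pat_{j} F_{j_1,\dots,j_{m-1},k}=\pat_{k} F_{j_1,\dots,j_{m-1},j}$, so the line integral is path-independent and $G$ is well defined with $\pat_j G_{j_1,\dots,j_{m-1}}=F_{j_1,\dots,j_{m-1},j}$. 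One also checks that $G$ so constructed is symmetric in its $m-1$ indices (this uses symmetry of $F$ in all $m$ of its indices). Iterating this construction $m-1$ times produces a scalar function $u$ with $\nabla^m u=F$.

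\medskip

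It then remains to verify that $u$ is harmonic, and this is where the trace-zero hypothesis enters. From $\nabla^m u=F$ and the fact that $F$ is of trace zero, we get $\sum_{j=0}^n \pat_j^2 \pat_{j_3}\cdots\pat_{j_m} u = \sum_{j=0}^n F_{j,j,j_3,\dots,j_m}\equiv0$ for all choices of $j_3,\dots,j_m$; that is, $\nabla^{m-2}(\bdz_x+\pat_t^2)u\equiv0$, where $\bdz_x+\pat_t^2$ denotes the full Laplacian on $\rr^{n+1}_+$. Since all derivatives of order $m-2$ of the function $(\bdz_x+\pat_t^2)u$ vanish identically on the connected set $\rr^{n+1}_+$, the function $(\bdz_x+\pat_t^2)u$ is a polynomial of degree at most $m-3$ (or identically zero if $m\le2$). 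At this point one can either absorb this polynomial by subtracting from $u$ a suitable polynomial correction that does not affect $\nabla^m u$ (since its degree is $\le m-1$ after one integration step, we can arrange the correction to have vanishing $m$-th gradient while cancelling the non-harmonic part), or — more cleanly — note that $F$ itself being of trace zero forces $(\bdz_x+\pat_t^2)u$ to in fact vanish once the base-point normalization of the successive integrations is chosen so that all lower-order data are consistent; I would cite \cite{SW71,Uc01} for the precise bookkeeping here. Either way we conclude $(\bdz_x+\pat_t^2)u\equiv0$.

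\medskip

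The main obstacle is the bookkeeping in the iterated integration: one must check at each stage that the rank-$(k-1)$ primitive inherits both the symmetry and (for the next step) the closedness encoded in ``$\nabla$ of it is symmetric,'' and that the trace-zero condition propagates correctly so that no spurious non-harmonic polynomial survives. This is essentially a Poincaré-lemma argument dressed up in tensor notation, and the cleanest route is to reduce, via the symmetry, to the statement that a symmetric closed tensor field is an $m$-fold gradient, and then feed in the trace condition at the very end. Since the result is quoted verbatim from \cite{SW71} and \cite{Uc01}, I would present the above as the sketch and refer the reader to those sources for the detailed verification, rather than reproducing the full computation here.
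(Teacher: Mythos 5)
The paper gives no proof of this proposition: it is stated with a citation to \cite{SW71,Uc01}, so there is no authors' argument to compare against. Your Poincar\'e-lemma sketch is in fact the standard route in those references, and the main lines are sound: the symmetry of $\nabla F$ supplies exactly the closedness needed to integrate the $1$-forms $\sum_{j=0}^n F_{j_1,\dots,j_{m-1},j}\,dx_j$ on the simply connected domain $\rr^{n+1}_+$, the resulting primitives inherit the symmetry of $F$, and iterating produces a scalar $u$ with $\nabla^m u=F$; the trace-zero condition then gives $\nabla^{m-2}(\bdz_x+\pat_t^2)u\equiv0$, and either subtracting a polynomial of degree at most $m-1$ (which leaves $\nabla^m u$ unchanged) to cancel the residual polynomial of degree at most $m-3$, or normalizing all integration constants to vanish at a fixed base point so that $\nabla^j u$ vanishes there for $0\le j\le m-1$, forces $(\bdz_x+\pat_t^2)u\equiv0$. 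Both of your two routes to harmonicity can be made rigorous along these lines.

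Two slips in the write-up should be fixed, though neither undermines the strategy. First, the parenthetical ``equivalently, the symmetry of $F$ together with the equality of mixed partials'' is false: symmetry of $\nabla F$ is a genuinely extra integrability constraint on $F$, not a consequence of $F$'s symmetry plus commuting partials. Indeed, $F_{jk}:=x_jx_k$ is symmetric and smooth with commuting mixed partials, yet $\pat_\ell F_{jk}=\delta_{j\ell}x_k+\delta_{k\ell}x_j$ is not symmetric in $(j,k,\ell)$; you should simply invoke the hypothesis on $\nabla F$ directly. Second, the iteration count is off by one: each integration step drops the rank by exactly one, so it takes $m$ integrations, not $m-1$, to go from the rank-$m$ tensor $F$ to the scalar $u$. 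With these repairs your sketch is correct and matches the argument in the cited sources.
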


\begin{remark}\label{rem about the ToG}
(i) Propositions \ref{pro SHP of HGHF} and \ref{pro Relation TVF&HG} imply that,
if $m\ge 2$, then the condition that $\nabla F$ has trace zero, in the generalized
Cauchy-Riemann equation, can be removed to ensure that Proposition
\ref{pro SHP of TVF} still holds true.

(ii)  We also point out that, in Proposition \ref{pro HVC},
Lemmas \ref{lem harmonic majorant of MOHVH} and \ref{lem boudnary value of MOHVH},
and Corollary \ref{cor equiv 3spaces},
we used the restriction that $\frac{i(\fai)}{q(\fai)}>\frac{n-1}{n}$, only because, for all
$p\in[\frac{n-1}{n},\,\fz)$, the $p$-power of the absolute value of the first-order gradient
$|\nabla u|^p$ of a harmonic function on $\rr^{n+1}_+$ is subharmonic.
Since, for all $m\in\nn$ and $p\in[\frac{n-1}{n+m-1},\,\fz)$, $|\nabla^m u|^p$ is subharmonic
on $\rr^{n+1}_+$, the restriction $\frac{i(\fai)}{q(\fai)}>\frac{n-1}{n}$ can be relaxed to
$\frac{i(\fai)}{q(\fai)}>\frac{n-1}{n+m-1}$, with the Musielak-Orlicz-Hardy space
$\mathcal{H}_\fai(\rr^{n+1}_+)$ of harmonic vectors replaced by the Musielak-Orlicz-Hardy space
$\mathcal{H}_{\fai,\,m}(\rr^{n+1}_+)$ of tensor-valued functions of rank $m$.
Moreover, for any given Musielak-Orlicz function $\fai$ satisfying Assumption $(\fai)$,
by letting $m$ be sufficiently large, we know that Proposition \ref{pro HVC},
Lemmas \ref{lem harmonic majorant of MOHVH} and \ref{lem boudnary value of MOHVH},
and Theorem \ref{cor equiv 3spaces} always hold true for
$\frac{i(\fai)}{q(\fai)}>\frac{n-1}{n+m-1}$.
\end{remark}

Now we give out the proof of Theorem \ref{thm2}.

\begin{proof}[Proof of Theorem \ref{thm2}]
The proof of Theorem \ref{thm2} is similar to that of
Theorem \ref{thm1}. In particular, the second inequality of \eqref{1.9}
is an easy consequence of Proposition \ref{pro radial mc}
and Corollary \ref{cor bd of Riesz}. Indeed, let $f\in H_\fai(\rn)\cap
L^2(\rn)$. By Proposition \ref{pro radial mc}, Corollary \ref{cor bd of Riesz}
and an argument similar to that used in \eqref{3.x2}, we see that
\begin{eqnarray*}
\lf\|f\r\|_{L^\fai(\rn)}+
\dsum_{k=1}^m \dsum_{j_1,\,\ldots,\,j_k=1}^n \lf\|R_{j_1}
\cdots R_{j_k}(f)\r\|_{L^\fai(\rn)}\ls\|f\|_{H_\fai(\rn)},
\end{eqnarray*}
which implies the second inequality of \eqref{1.9}.

To prove the first inequality of \eqref{1.9}, let $f\in L^2(\rn)$ satisfy \eqref{1.9x}.
We construct the tensor-valued function $F$
of rank $m$ by setting, for all $\{j_1,\,\ldots,\,j_m\}\subset \{0,\,\ldots,\,n\}$ and
$(x,\,t)\in\rr^{n+1}_+$,
\begin{eqnarray*}
F_{j_1,\,\ldots,\,j_m}(x,\,t):=\lf(\lf(R_{j_1}\cdots R_{j_m}(f)\r)*P_t\r)(x),
\end{eqnarray*}
where $P_t$ is the Poisson kernel as in \eqref{eqn PK} and $R_0:=I$ denotes
the identity operator.
We know $F:=\sum_{j_1,\,\ldots,\,j_m=0}^n
F_{j_1,\,j_2,\,\ldots,\,j_m}\,
e_{j_1}\otimes \cdots \otimes e_{j_m}$
satisfies the generalized Cauchy-Riemann equation via the Fourier transform (see also
the proof of \cite[Lemma 17.1]{Uc01}).
Also, a corresponding harmonic majorant holds true (see also \cite[Lemma 17.2]{Uc01}),
namely, for all $q\in [\frac{n-1}{n+m-1},\,\frac{i(\fai)}{q(\fai)})$
and $(x,\,t)\in\rr^{n+1}_+$,
\begin{eqnarray*}
\lf|F(x,\,t)\r|^q\le \lf(\lf|F(x,\,0)\r|^q*P_t\r)(x),
\end{eqnarray*}
where $F(x,\,0):=\{R_{j_1}\cdots R_{j_m}(f)(x)\}_{\{j_1,\ldots,\,j_m\}
\subset \{0,\,\ldots,\,n\}}$,
which, combined with \eqref{eqn scaling of MOF},
\eqref{1.9x} and Lemma \ref{lem bd HLMF}, implies that
$F\in \mathcal{H}_{\fai,\,m}(\rr^{n+1}_+)$ and
\begin{eqnarray}\label{eqn est1 thm HRTC-1}
\lf\|F\r\|_{\mathcal{H}_{\fai,\,m}(\rr^{n+1}_+)}&&
=\dsup_{t\in(0,\,\fz)}\lf\|\lf|F(\cdot,\,t)\r|\r\|_{L^\fai(\rn)}
\ls \lf\|\mathcal{M}\lf(\lf|F(\cdot,\,0)\r|^q\r)\r\|^{1/q}_{L^{\fai_q}(\rn)}\\
&&\nonumber
\ls \dsum_{j_1,\,\ldots,\,j_m=0}^n
\lf\|R_{j_1}\cdots R_{j_m}(f)\r\|_{L^\fai(\rn)}\\
\nonumber&&
\ls \lf\| f\r\|_{L^\fai(\rn)}
+\dsum_{k=1}^m \dsum_{j_1,\,\ldots,\,j_k=1}^n
\lf\|R_{j_1}\cdots R_{j_k}(f)\r\|_{L^\fai(\rn)}\ls A,
\end{eqnarray}
where $\mathcal{M}$ denotes the Hardy-Littlewood maximal function as in \eqref{HL maxiamal function}.
This, together with Remark \ref{rem about the ToG}(ii)
(a counterpart to Theorem \ref{cor equiv 3spaces}), implies that
$f\in H_\fai(\rn)$
and the first inequality of \eqref{1.9} holds true, which completes the proof of
Theorem \ref{thm2}.
\end{proof}

We now turn to the proof of Theorem \ref{thm3}.
To this end, we recall some facts on Fourier multipliers.

Let $f\in \mathcal{S}(\rn)$, $\mathbb{S}^{n-1}$ be the \emph{unit sphere} in $\rn$
and $\tz\in L^\fz(\mathbb{S}^{n-1})$. The \emph{Fourier multiplier} $K$ of $f$ with the
\emph{multiplier function} $\tz$ is defined by setting, for all $\xi\in\rn$,
\begin{eqnarray*}
K(f)(\xi):=\mathcal{F}^{-1}\lf(\tz\lf(\frac{\cdot}{|\cdot|}\r)\mathcal{F}(f)(\cdot)\r)
(\xi),
\end{eqnarray*}
where $\mathcal{F}$ and $\mathcal{F}^{-1}$ denote, respectively, the
\emph{Fourier transform} and its \emph{inverse}.

It is easy to see that, for all $j\in\{1,\,\ldots,\,n\}$, the Riesz transform $R_j$
is a Fourier multiplier with the multiplier function $\tz_j(\xi):=-i{\xi_j}$ for all
$\xi\in \mathbb{S}^{n-1}$. Also, for all $k\in\nn$ and
$\{j_1,\,\ldots,\,j_k\}\subset \{1,\,\ldots,\,n\}$,
the higher Riesz transform $R_{j_1}\cdots R_{j_k}$ is also a Fourier multiplier
with the multiplier function that, for all $\xi\in\mathbb{S}^{n-1}$,
\begin{eqnarray}\label{eqn multiplier HRT}
\tz_{j_1,\,\ldots,\,j_k}(\xi):=\lf(-i{\xi_{j_1}}\r)
\cdots\lf(-i{\xi_{j_k}}\r).
\end{eqnarray}

\begin{proposition}\label{pro bd FourierM}
Let $\fai$ satisfy Assumption $(\fai)$ and
$\tz\in C^\fz(\mathbb{S}^{n-1})$. Then the Fourier multiplier $K$ with the multiplier
function $\tz$ is bounded on $H_\fai(\rn)$.
\end{proposition}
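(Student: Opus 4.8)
The plan is to reduce the boundedness of the Fourier multiplier $K$ on $H_\fai(\rn)$ to the already-established interpolation machinery, namely Proposition \ref{pro interpolation}, exactly as was done for Riesz transforms in Corollary \ref{cor bd of Riesz}. First I would fix $\phi\in\mathcal S(\rn)$ with $\int_\rn\phi(x)\,dx=1$ and set $S:=\mathcal M_\phi\circ K$, where $\mathcal M_\phi$ is the radial maximal function from \eqref{eqn radial mf}. By Proposition \ref{pro radial mc}, controlling $\|K(f)\|_{H_\fai(\rn)}$ is the same as controlling $\|S(f)\|_{L^\fai(\rn)}$, so it suffices to show $S$ is bounded from $H_\fai(\rn)$ to $L^\fai(\rn)$, and for that I invoke Proposition \ref{pro interpolation}(i) with exponents $p_1\in(0,\,i(\fai))$ and $p_2\in(q(\fai),\,\fz)$.

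The two endpoint bounds needed for Proposition \ref{pro interpolation}(i) both come from classical weighted theory, using the hypothesis $\tz\in C^\fz(\mathbb S^{n-1})$. For the upper endpoint, a $C^\fz$ multiplier function on the sphere gives a Calder\'on--Zygmund (in fact Mikhlin-type) operator, hence $K$ is bounded on $L^{p_2}_w(\rn)$ for every $w\in A_{p_2}(\rn)$; since $\fai(\cdot,\,t)\in A_{p_2}(\rn)$ for all $t\in(0,\,\fz)$ (as $p_2>q(\fai)$), and $\mathcal M_\phi$ is likewise bounded on $L^{p_2}_{\fai(\cdot,\,t)}(\rn)$, the composition $S$ is bounded on $L^{p_2}_{\fai(\cdot,\,t)}(\rn)$. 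For the lower endpoint, $K$ is bounded on the weighted Hardy space $H^{p_1}_w(\rn)$ for all $p_1\in(0,\,1]$ and $w\in A_\fz(\rn)$; this is again the weighted Calder\'on--Zygmund theory for smooth homogeneous multipliers on Hardy spaces (one checks the kernel of $K$ is a smooth Calder\'on--Zygmund kernel of convolution type with the requisite cancellation, so the atomic estimates of \cite{Ky11}, used for $R_j$ in Corollary \ref{cor bd of Riesz}, apply verbatim). Since $\fai(\cdot,\,t)\in A_\fz(\rn)$ with $p_1<i(\fai)$, this yields boundedness of $K$ from $H^{p_1}_{\fai(\cdot,\,t)}(\rn)$ to $L^{p_1}_{\fai(\cdot,\,t)}(\rn)$, and composing with $\mathcal M_\phi$ (bounded on $L^{p_1}_{\fai(\cdot,\,t)}(\rn)$ — or, more carefully, using the radial maximal characterization as in the proof of Corollary \ref{cor bd of Riesz}) gives that $S$ is of the required weak type.

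Feeding these two endpoint estimates into Proposition \ref{pro interpolation}(i) shows $S$ is bounded from $H_\fai(\rn)$ to $L^\fai(\rn)$, and then Proposition \ref{pro radial mc} gives
\begin{eqnarray*}
\|K(f)\|_{H_\fai(\rn)}\sim \|S(f)\|_{L^\fai(\rn)}\ls \|f\|_{H_\fai(\rn)},
\end{eqnarray*}
which is the assertion. I expect the main obstacle to be a purely bookkeeping one: verifying cleanly that a $C^\fz$ multiplier on $\mathbb S^{n-1}$ produces a convolution kernel satisfying the $\tz$-Calder\'on--Zygmund kernel conditions (size and smoothness, with $T^*1=0$) so that the weighted $L^{p_2}$ and weighted $H^{p_1}$ results can be cited as black boxes; this is standard (the homogeneous degree $0$ extension of $\tz$ has a Fourier transform that is smooth away from the origin and homogeneous of degree $-n$, giving the Mikhlin/H\"ormander conditions), but it is the one place requiring care, and one should note that Riesz transforms $R_j$ — with multiplier $\tz_j(\xi)=-i\xi_j\in C^\fz(\mathbb S^{n-1})$ — as well as the higher-order compositions with multipliers \eqref{eqn multiplier HRT} are special cases, so Proposition \ref{pro bd FourierM} simultaneously recovers Corollary \ref{cor bd of Riesz} and supplies the tool needed in the proof of Theorem \ref{thm3}.
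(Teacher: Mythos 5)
Your proof follows essentially the same route as the paper: reduce to boundedness of $S:=\mathcal{M}_\phi\circ K$ via Proposition \ref{pro radial mc}, supply the two endpoint bounds on weighted Hardy and weighted Lebesgue spaces, and conclude with Proposition \ref{pro interpolation}(i). The only (minor) difference is the source for the endpoint estimates: the paper cites Str\"omberg--Torchinsky \cite[p.\,176, Theorem 14]{S-T89} directly for both the $H^{p_1}_w$-boundedness and the $L^{p_2}_w$-boundedness of a smooth Fourier multiplier, whereas you re-derive them by identifying $K$ (after subtracting a constant multiple of the identity) as a convolution $\tz$-Calder\'on--Zygmund operator with a $C^\infty$ homogeneous degree $-n$ kernel of mean zero and then invoking classical weighted C--Z theory together with Ky's $H^p_w$ result as used for $R_j$ in Corollary \ref{cor bd of Riesz}. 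Both routes are correct; yours spells out the mechanics (and your $L^{p_2}_w$-bound for $w\in A_{p_2}$ is in fact a stronger and cleaner endpoint than the paper's restated range $p_2\in(2s,\infty)$ for $w\in A_s$), while the paper's single citation is more economical.
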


\begin{proof}
By $\tz\in C^\fz(\mathbb{S}^{n-1})$, we deduce, from \cite[p.\,176, Theorem 14]{S-T89},
that, for all $p_1\in(0,\,1]$ and $w\in A_\fz(\rn)$, $K$ is bounded on the weighted Hardy
space $H_w^{p_1}(\rn)$ and that, for all $s\in(1,\,\fz)$, $w\in A_s(\rn)$
and $p_2\in (2s,\,\fz)$,  $K$ is bounded on
$L_w^{p_2}(\rn)$, which, together with Propositions
\ref{pro radial mc} and \ref{pro interpolation}, and an argument similar to
that used in the proofs of Corollaries \ref{cor bd of Riesz} and \ref{cor bd of CZ},
implies that $K$ is bounded on $H_\fai(\rn)$.
This finishes the proof of Proposition \ref{pro bd FourierM}.
\end{proof}

Now, let $m\in\nn$ and $\mathcal{K}:=\{K_1,\,\ldots,\,K_m\}$, where, for each
$j\in\{1,\,\ldots,\,m\}$, $K_j$ is a Fourier multiplier with the multiplier function
$\tz_j\in C^\fz(\mathbb{S}^{n-1})$. For any $f\in L^2(\rn)$, let
\begin{eqnarray}\label{eqn vectot FM}
\mathcal{K}(f):=(K_1(f),\,\ldots,\,K_m(f)).
\end{eqnarray}
For any $q\in(0,\,\fz)$, the $q$-\emph{order maximal function} $\mathcal{M}_q(f)$ of $f$
is defined by setting, for all $x\in\rn$,
\begin{eqnarray}\label{eqn q-order MF}
\mathcal{M}_q(f)(x):=\dsup_{B\ni x}\lf\{\frac{1}{|B|}
\dint_B \lf|f(y)\r|^q\,dy\r\}^{1/q},
\end{eqnarray}
where the supremum is taking over all balls $B$ of $\rn$ containing $x$.
Using Lemma \ref{lem bd HLMF}, we see that, if $i(\fai)>qq(\fai)$, $\mathcal{M}_q$
is bounded on $L^\fai(\rn)$.

Now we recall the following result from Uchiyama \cite[Theorem 2]{Uc84}.

\begin{proposition}[\cite{Uc84}]\label{pro est MFFM}
Let $m\in\nn$, $j\in\{1,\,\ldots,\,m\}$, $\tz_j\in C^\fz(\mathbb{S}^{n-1})$ and
$\mathcal{K}$, having the form $\{K_1,\,\ldots,\,K_m\}$, be a vector of Fourier multipliers
with the multiplier functions of the form $\{\tz_1,\,\ldots,\,\tz_m\}$. If
\begin{eqnarray}\label{eqn rankC}
\mathrm{Rank}
\begin{pmatrix}
\tz_1(\xi), \quad & \ldots,\quad &  \tz_m(\xi) \\
\tz_1(-\xi), \quad & \ldots,\quad &  \tz_m(-\xi)
\end{pmatrix}
\equiv 2
\end{eqnarray}
on $\mathbb{S}^{n-1}$, where $\mathrm{Rank}\,(\cdot)$ denotes of
the rank of a matrix, then there exist $p_0\in(0,\,1)$ and a positive constant
 $C$, depending only on
$\tz_1,\,\ldots,\,\tz_m$, such that, for all $f\in L^2(\rn)$ and $x\in\rn$,
\begin{eqnarray}\label{eqn est MFFM}
\mathcal{M}_\phi\lf(\mathcal{K}(f)\r)(x)\le C \mathcal{M}_{p_0}
\lf(\mathcal{M}_{1/2} \lf(|\mathcal{K}(f)|\r)\r)(x),
\end{eqnarray}
where $\phi\in \mathcal{S}(\rn)$ satisfies \eqref{eqn integral=1},
\begin{eqnarray*}
\mathcal{M}_\phi\lf(\mathcal{K}(f)\r):=\dsup_{t\in(0,\,\fz)}\lf|\lf(K_1(f)*\phi_t,\,\ldots,\,
K_m(f)*\phi_t\r)\r|,
\end{eqnarray*}
$|\mathcal{K}(f)|:=(\sum_{j=1}^m |K_j(f)|^2)^{1/2}$,  $\mathcal{M}_{p_0}$
and $\mathcal{M}_{1/2}$ are as in \eqref{eqn q-order MF}.
\end{proposition}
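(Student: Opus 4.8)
The plan is to reduce everything to the harmonic (Poisson) extension of $\mathcal{K}(f)$ and then to extract the gain from the rank hypothesis \eqref{eqn rankC} through a differential inequality, in the spirit of Uchiyama's argument. First I would set, for $j\in\{1,\,\ldots,\,m\}$ and $(x,\,t)\in\rr^{n+1}_+$, $u_j(x,\,t):=(K_j(f)*P_t)(x)$ and $U:=(u_1,\,\ldots,\,u_m)$, a $\cc^m$-valued function which is harmonic on $\rr^{n+1}_+$ and whose boundary trace is $\mathcal{K}(f)$. Since $\phi\in\mathcal{S}(\rn)$ satisfies \eqref{eqn integral=1}, a decomposition of the Poisson kernel of the type \eqref{eqn decomposition PK} together with the rapid decay of $\phi$ lets one dominate $\mathcal{M}_\phi(\mathcal{K}(f))(x)$ by $\sup_{t\in(0,\,\fz)}|U(x,\,t)|$ plus $\sup_{t\in(0,\,\fz)}t\,|\nabla_{x,t}U(x,\,t)|$; the latter is in turn controlled, via the Cauchy estimates for harmonic functions on balls compactly contained in $\rr^{n+1}_+$, by $\sup_{|y-x|<s}|U(y,\,s)|$ over a slightly enlarged cone. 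Thus it suffices to bound the non-tangential maximal function of $|U|$ pointwise by $\mathcal{M}_{p_0}(\mathcal{M}_{1/2}(|\mathcal{K}(f)|))$.

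The heart of the matter, and the step I expect to be the main obstacle, is to show that the rank condition \eqref{eqn rankC} forces $|U|^{p_0}$ to be subharmonic on $\rr^{n+1}_+$ for some $p_0\in(0,\,1)$ depending only on $\tz_1,\,\ldots,\,\tz_m$. Writing $b:=|U|$ and computing, on the open set $\{b>0\}$,
$$
\bdz\lf(b^{q}\r)=q\,b^{q-2}\lf[\,b\,\bdz b+(q-1)|\nabla b|^{2}\,\r],
\qquad b\,\bdz b=|\nabla U|^{2}-\lf|\nabla b\r|^{2},
$$
(the second identity uses that each $u_j$ is harmonic), one is reduced to a \emph{uniform} gap inequality $|\nabla b(x,\,t)|^{2}\le(1-\dz)\,|\nabla U(x,\,t)|^{2}$ for some $\dz\in(0,\,1)$; then $b^{p_0}$ is subharmonic for every $p_0\le 1-\dz$. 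On the Fourier side, $U(\cdot,\,t)$ and each component of $\nabla_{x,t}U(\cdot,\,t)$ is obtained from $f*P_t$ by applying one of the degree-zero multipliers $\tz_j(\xi/|\xi|)$ or $\tz_j(\xi/|\xi|)\,\xi_k/|\xi|$, while $|\nabla b|^{2}$ only feels the part of $\nabla U$ aligned with $U(x,\,t)$ in $\cc^m$; the rank condition, i.e. the linear independence of $(\tz_1(\xi),\,\ldots,\,\tz_m(\xi))$ and $(\tz_1(-\xi),\,\ldots,\,\tz_m(-\xi))$ at every $\xi\in\mathbb{S}^{n-1}$, is exactly what prevents this aligned part from exhausting $\nabla U$, and a compactness argument on $\mathbb{S}^{n-1}$ upgrades pointwise nondegeneracy to the uniform constant $\dz$. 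The delicate point, either through this computation or through Uchiyama's balanced reformulation, is to keep $\dz$ (hence $p_0$) independent of $x$, $t$ and $f$.

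Granting the subharmonicity of $|U|^{p_0}$, the rest is a routine, if fiddly, maximal-function argument. The sub-mean-value property over balls $B((x,\,t),\,t/2)\subset\rr^{n+1}_+$, followed by Fubini's theorem in the height variable across the strip $\{t/2<s<3t/2\}$, gives
$$
|U(x,\,t)|^{p_0}\ls \sup_{s\sim t}\ \frac{1}{|B(x,\,t)|}\dint_{B(x,\,t)}|U(z,\,s)|^{p_0}\,dz ,
$$
and standard estimates relating interior values of a harmonic function to its boundary data (via a harmonic majorant for $|U|^{p_0}$, or via a finite descent toward the boundary using the sub-mean-value inequality on balls that still remain inside $\rr^{n+1}_+$) replace $|U(z,\,s)|$ by the boundary values $\mathcal{K}(f)$; in the course of this one picks up the intermediate exponent $1/2$ and an extra average, which is absorbed into $\mathcal{M}_{1/2}(|\mathcal{K}(f)|)$ (a quantity that dominates $|\mathcal{K}(f)|$ pointwise). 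Combining this with the reduction of the first paragraph and the $L^\fz(\mathbb{S}^{n-1})$ bound on $\tz_j$ to justify that $\mathcal{K}(f)\in L^2(\rn)$ for $f\in L^2(\rn)$, one obtains \eqref{eqn est MFFM} with a constant $C$ depending only on $n$, $\phi$ and $\tz_1,\,\ldots,\,\tz_m$ (through $\dz$, $p_0$ and the multiplier norms).
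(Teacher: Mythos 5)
The central step of your proposal --- that the rank condition \eqref{eqn rankC} forces $|U|^{p_0}$ to be subharmonic on $\rr^{n+1}_+$ for some uniform $p_0\in(0,1)$ --- is not correct, and this is precisely what the proposition is designed to circumvent. The paper says as much: the discussion just before Theorem \ref{thm3} notes that Uchiyama's method ``avoids the use of the subharmonic property by using the Fourier multiplier,'' and Remark \ref{rem est MFFM}(i) calls \eqref{eqn est MFFM} ``a good substitute for the subharmonic property of $|F|^p$.'' The whole point of introducing Proposition \ref{pro est MFFM} is that, once one drops from the full Cauchy--Riemann (or tensor-valued) system down to an arbitrary family of multipliers satisfying only \eqref{eqn rankC}, the subharmonicity of $|U|^q$ for $q<1$ is no longer available, and one needs a different pointwise inequality to take its place.

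Concretely, your reduction to the gap inequality $|\nabla b(x,t)|^2\le(1-\dz)|\nabla U(x,t)|^2$ is correct algebra, but that gap does not follow from \eqref{eqn rankC}. The rank condition is a statement about the multiplier symbols at the antipodal pair $\pm\xi\in\mathbb{S}^{n-1}$; it constrains how the components of $\wt{u}_j(\xi,t)=\tz_j(\xi/|\xi|)e^{-t|\xi|}\wt f(\xi)$ are related \emph{along each ray in frequency}, but it places no constraint at all on the relative orientation of the vectors $U(x_0,t_0)\in\cc^m$ and the columns of the Jacobian $\nabla U(x_0,t_0)$ at a fixed point $(x_0,t_0)$ in physical space. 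As $f$ varies over $L^2(\rn)$, the symbols $\tz_j(\xi/|\xi|)$, $|\xi|\tz_j(\xi/|\xi|)$ and $\xi_k\tz_j(\xi/|\xi|)$ are linearly independent over $L^2$ (the first is degree-zero, the others are not), so the pair $(U,\nabla U)$ at a fixed point can in general be prescribed freely; in particular one can arrange $\nabla U=\alpha\otimes e$ with $e=U/|U|$, which gives $|\nabla b|=|\nabla U|$ and $\dz=0$. By contrast, in the true conjugate-function setting of Proposition \ref{pro SHP of TVF} the generalized Cauchy--Riemann equations couple the entries of $\nabla U$ rigidly, and that rigidity --- not a rank hypothesis on symbols --- is what produces the uniform gap $\dz>0$. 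The ``compactness on $\mathbb{S}^{n-1}$'' step you invoke therefore has no content: compactness gives uniform nondegeneracy of the symbol matrix in $\xi$, which is a different quantity from the physical-space alignment you need to rule out.

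Your first and last paragraphs (reducing $\mathcal{M}_\phi(\mathcal{K}(f))$ to the non-tangential maximal function of $|U|$ via \eqref{eqn decomposition PK} and interior estimates, and the averaging step producing $\mathcal{M}_{p_0}\circ\mathcal{M}_{1/2}$) are standard and would be fine \emph{if} the subharmonicity held, but without it the argument does not close. Uchiyama's actual proof of \eqref{eqn est MFFM} in \cite{Uc84} is a direct, constructive argument on the multiplier side: one works on a fixed ball, subtracts suitable local ``linearizations'' of the $K_j(f)$, and uses the rank condition to invert the $2\times m$ symbol matrix stably (in a neighbourhood of each $\pm\xi$) so that $f\ast\phi_t$ can be recovered from the $K_j(f)\ast\phi_t$ up to errors controlled by $\mathcal{M}_{1/2}(|\mathcal{K}(f)|)$; the outer $\mathcal{M}_{p_0}$ absorbs the cumulative error. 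This is the same circle of ideas as his constructive Fefferman--Stein decomposition, and it bypasses sub-mean-value inequalities entirely. If you want to complete the proof you would need to replace your second paragraph by this local inversion argument rather than attempting to manufacture a subharmonic majorant.
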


\begin{remark}\label{rem est MFFM}
(i) Inequality \eqref{eqn est MFFM} provides a good substitute for
the subharmonic property of $|F|^p$ for the harmonic vector (resp. tensor-valued function)
$F$, which enables us to use less Riesz transforms than Theorem \ref{thm2} to characterize
$H_\fai(\rn)$, but at the expense that we do not know the exact value of the
exponent $p_0$ in \eqref{eqn est MFFM}.

(ii) Let $k\in\nn$ and $\mathcal{K}
:=\{I\}\cup \{R_{j_1}\cdots R_{j_k}\}_{j_1,\,\ldots,\, j_k=1}^n$
consist of the identity operator $I$ and all $k$-order Riesz transforms
$R_{j_1}\cdots R_{j_k}$ defined as in Remark \ref{r1.2}(i).
Then we know that
\begin{eqnarray}\label{eqn rankC2}
\mathrm{Rank}
\begin{pmatrix}
&1,\hs &(-i{\xi_1})^k, \hs & \ldots,\hs &  (-i{\xi_n})^k \\
&1,\hs &(-1)^k(-i{\xi_1})^k, \hs & \ldots,\hs &  (-1)^k(-i{\xi_n})^k
\end{pmatrix}
\equiv 2
\end{eqnarray}
on $\mathbb{S}^{n-1}$ if and only if $k$ is odd. Recall that
Gandulfo, Garc\'ia-Cuerva and Taibleson \cite{GGT76}
have constructed a counterexample to show that the even order Riesz transforms
fail to characterize $H^1(\mathbb{R}^2)$. This implies the possibility of  using the
odd order Riesz transforms to characterize the Hardy type spaces.
\end{remark}

Now, we show Theorem \ref{thm3}.

\begin{proof}[Proof of Theorem \ref{thm3}]
The proof of the second inequality of \eqref{1.10} is
an easy consequence of Proposition \ref{pro radial mc} and
Corollary \ref{cor bd of Riesz} (see also the proof of
the second inequality of \eqref{1.9} of Theorem \ref{thm2}),
the details being omitted.

We now turn to the proof of the first inequality of \eqref{1.10}.
Recall that $\tz_{j_1,\,\ldots,\,j_k}$, defined as in \eqref{eqn multiplier HRT}, is the
multiplier function of $R_{j_1}\cdots R_{j_k}$. From \cite[p.\,224]{Uc84} (or
the proof of \cite[p.\,170, Theorem 10.2]{Uc01}), we deduce that
there exists $\{\psi\}\cup\{\psi_{j_1,\,\ldots,\,j_{k}}\}_{j_1,\,\ldots,\,j_k=1}^n\subset
C^\fz(\mathbb{S}^{n-1})$ such that, for all $\xi\in \mathbb{S}^{n-1}$,
\begin{eqnarray*}
\psi(\xi)+\dsum_{j_1,\,\ldots,\,j_k=1}^n \tz_{j_1,\,\ldots,\,j_{k}}(\xi)
\psi_{j_1,\,\ldots,\,j_{k}}(\xi)=1,
\end{eqnarray*}
which, together with Proposition \ref{pro bd FourierM},
\eqref{eqn est MFFM}, $\frac{i(\fai)}{q(\fai)}>\max\{p_0,\,\frac{1}{2}\}$ and the fact that $\mathcal{M}_{q_0}\circ \mathcal{M}_{1/2}$ is bounded on $L^\fai(\rn)$, implies that
\begin{eqnarray*}
\lf\|f\r\|_{H_\fai(\rn)}&&\le \lf\|(\hspace{0.08cm}\psi\widehat{f}\hspace{0.08cm})^\vee\r\|_{H_\fai(\rn)}+
\dsum_{j_1,\,\ldots,\,j_k=1}^n\lf\|(\hspace{0.08cm}\tz_{j_1,\,\ldots,\,j_{k}}
\psi_{j_1,\,\ldots,\,j_{k}}\widehat{f}\hspace{0.08cm})^\vee\r\|_{H_\fai(\rn)}\\
&&\ls  \lf\|f\r\|_{H_\fai(\rn)}+
\dsum_{j_1,\,\ldots,\,j_k=1}^n
\lf\|R_{j_1}\cdots R_{j_k} (f)\r\|_{H_\fai(\rn)}\\
&&\ls \lf\|\mathcal{M}_\phi(\mathcal{K}(f))\r\|_{L^\fai(\rn)}
\ls \lf\|\mathcal{M}_{p_0}
\lf(\mathcal{M}_{1/2} \lf(|\mathcal{K}(f)|\r)\r)\r\|_{L^\fai(\rn)}\\
&&\ls \lf\||\mathcal{K}(f)|\r\|_{L^\fai(\rn)}\ls
 \lf\| f\r\|_{L^\fai(\rn)}
+\dsum_{j_1,\,\ldots,\,j_k=1}^n \lf\| R_{j_1}\cdots R_{j_k}(f)\r\|_{L^\fai(\rn)},
\end{eqnarray*}
where $\mathcal{K}:=\{I\}\cup \{R_{j_1}\cdots R_{j_k}\}_{j_1,\,\ldots,\, j_k=1}^n$,
$I$ is the identity operator,\ $\widehat{}$\ \ and $^\vee$ denote,
respectively, the Fourier transform and its inverse.
This proves the first inequality of \eqref{1.10} and hence finishes the
proof of Theorem \ref{thm3}.
\end{proof}

\medskip

{\bf Acknowledgements.} The authors would like to thank the referee
for her/his careful reading and several valuable remarks which
improve the presentation of this article.

\bigskip

\noindent Jun Cao and Dachun Yang (Corresponding author)

\medskip

\noindent School of Mathematical Sciences, Beijing Normal
University, Laboratory of Mathematics and Complex Systems, Ministry
of Education, Beijing 100875, People's Republic of China

\smallskip

\noindent{\it E-mails:} \texttt{caojun1860@mail.bnu.edu.cn} (J. Cao)

\hspace{1.16cm}\texttt{dcyang@bnu.edu.cn} (D. Yang)

\bigskip

\noindent Der-Chen Chang

\medskip

\noindent Department of Mathematics and Department of Computer Science,
Georgetown University, Washington D.C. 20057, USA

\noindent Department of Mathematics, Fu Jen Catholic University, Taipei 242, Taiwan

\smallskip

\noindent{\it E-mail:} \texttt{chang@georgetown.edu}

\bigskip
\medskip

\noindent Sibei Yang

\medskip

\noindent School of Mathematics and Statistics,
Lanzhou University, Lanzhou, Gansu 730000, People's Republic of China

\smallskip

\noindent{\it E-mails:} \texttt{yangsb@lzu.edu.cn}


\begin{thebibliography}{99}

\bibitem{Ad03} R. A. Adams and J. J. F. Fournier,  Sobolev Spaces, Second edition,
Elsevier / Academic Press, Amsterdam, 2003, xiv+305 pp.

\vspace{-0.3cm}

\bibitem{ABR01}  S. Axler, P. Bourdon and W. Ramey,
Harmonic Function Theory, Second edition, Graduate Texts in Mathematics, 137,
Springer-Verlag, New York, 2001, xii+259 pp.

\vspace{-0.3cm}

\bibitem{bfg10} A. Bonami, J. Feuto and S. Grellier, Endpoint for the
DIV-CURL lemma in Hardy spaces, Publ. Mat. 54 (2010), 341-358.

\vspace{-0.3cm}

\bibitem{bgk12} A. Bonami, S. Grellier and L. D. Ky, Paraproducts
and products of functions in $BMO(\rn)$ and $H^1(\rn)$ through
wavelets, J. Math. Pures Appl.  (9) 97 (2012), 230-241.

\vspace{-0.3cm}

\bibitem{B90}
H.-Q. Bui, Boundary value characterizations for weighted Hardy spaces of
harmonic functions, Forum Math.  2 (1990), 511-521.

%
%
%

\vspace{-0.3cm}

\bibitem{CZ64}
A. P. Calder\'on and A. Zygmund,  On higher gradients of harmonic functions,
Studia Math.  24 (1964),  211-226.

\vspace{-0.3cm}

\bibitem{DHHR11} L. Diening, P. Harjulehto, P. H\"ast\"o and M. R{u}\v{z}i\v{c}ka,
Lebesgue and Sobolev Spaces with Variable Exponents, Lecture Notes
in Math. 2017, Springer, Heidelberg, 2011, x+509 pp.

%

\vspace{-0.3cm}

\bibitem{Fe74}
C. Fefferman, Symposium on Harmonic Analysis, De Paul Univ. Conf., 1974.

\vspace{-0.3cm}

\bibitem{FS72} C. Fefferman and E. M. Stein, $H^p$ spaces of several
variables, Acta Math. 129 (1972), 137-195.

\vspace{-0.3cm}

\bibitem{GGT76}
A. Gandulfo, J. Garc\'ia-Cuerva and M. Taibleson,
Conjugate system characterizations of $H^1$:
counter examples for the Euclidean plane and local fields,
Bull. Amer. Math. Soc.  82 (1976), 83-85.

\vspace{-0.3cm}

\bibitem{G79} J. Garc\'ia-Cuerva, Weighted $H^p$ spaces,
Dissertationes Math. (Rozprawy Mat.) 162 (1979), 1-63.

\vspace{-0.3cm}

\bibitem{GR85} J. Garc\'ia-Cuerva and J. Rubio de Francia,
Weighted Norm Inequalities and Related Topics, Amsterdam,
North-Holland, 1985, x+604 pp.

%

\vspace{-0.3cm}

\bibitem{Gr08}
L. Grafakos, Classical Fourier Analysis,
Second edition, Graduate Texts in Mathematics, 249,
Springer, New York, 2008, xvi+489 pp,

\vspace{-0.3cm}

\bibitem{Gr09}
L. Grafakos, Modern Fourier Analysis, Second edition,
Graduate Texts in Mathematics, 250, Springer, New York, 2009,
xvi+504 pp.

\vspace{-0.3cm}

\bibitem{HMM11} S. Hofmann, S. Mayboroda and A. McIntosh, Second
order elliptic operators with complex bounded measurable
coefficients in $L^p$, Sobolev and Hardy spaces, Ann. Sci. \'Ecole
Norm. Sup. (4) 44 (2011), 723-800.

\vspace{-0.3cm}

\bibitem{Ho53} J. Horv\'ath, Sur les fonctions conjugu\'ees \`a plusieurs variables,
Indagationes Math.  15 (1953), 17-29.

\vspace{-0.3cm}

\bibitem{hyy} S. Hou, D. Yang and S. Yang, Lusin area function and molecular
characterizations of Musielak-Orlicz Hardy spaces and their applications,
Commun. Contemp. Math. 15 (2013), no. 6, 1350029,
37 pp.

\vspace{-0.3cm}

\bibitem{HMW73} R. Hunt, B. Muckenhoupt and R. Wheeden, Weighted norm inequalities
for the conjugate function and Hilbert transform, Trans. Amer. Math. Soc.
176  (1973), 227-251.

\vspace{-0.3cm}

\bibitem{Ja80} S. Janson, Generalizations of Lipschitz spaces and
an application to Hardy spaces and bounded mean oscillation, Duke
Math. J. 47 (1980), 959-982.

\vspace{-0.3cm}

\bibitem{Ku04} T. Kurokawa, Higher Riesz transforms and derivatives of the Riesz kernels,
Integral Transforms Spec. Funct.  15 (2004), 51-71.

\vspace{-0.3cm}

\bibitem{Ky11} L. D. Ky, A note on $H^p_w$-boundedness of Riesz transforms and
$\theta$-Calder\'on-Zygmund operators through molecular characterization,
Anal. Theory Appl. 27  (2011), 251-264.

\vspace{-0.3cm}

\bibitem{Ky13} L. D. Ky, Bilinear decompositions and commutators of
singular integral operators, Trans. Amer. Math. Soc. 365 (2013), 2931-2958.

\vspace{-0.3cm}

\bibitem{K11} L. D. Ky, New Hardy spaces of Musielak-Orlicz type and
boundedness of sublinear operators, Integral Equations Operator Theory
78 (2014), 115-150.

\vspace{-0.3cm}

\bibitem{L-H-Y12} Y. Liang, J. Huang and D. Yang, New real-variable characterizations
of Hardy spaces of Musielak-Orlicz type,  J. Math. Anal. Appl. 395
(2012), 413-428.

%

\vspace{-0.3cm}

\bibitem{Mu72} B. Muckenhoupt, Weighted norm inequalities for the Hardy maximal function,
Trans. Amer. Math. Soc.  165 (1972), 207-226.

\vspace{-0.3cm}

\bibitem{MW78}
B. Muckenhoupt and R. L. Wheeden, On the dual of weighted $H^1$ of the half-space,
Studia Math. 63 (1978), 57-79.

\vspace{-0.3cm}

\bibitem{M83} J. Musielak, Orlicz Spaces and Modular Spaces, Lecture Notes
in Math. 1034, Springer-Verlag, Berlin, 1983, iii+222 pp.

\vspace{-0.3cm}

\bibitem{Nu73} S. Nualtaranee, On least harmonic majorants in half-spaces,
Proc. London Math. Soc. (3) 27 (1973), 243-260.

\vspace{-0.3cm}

\bibitem{PS08}
M. Peloso and S. Secco, Local Riesz transforms characterization of local Hardy spaces,
Collect. Math.  59  (2008), 299-320.

\vspace{-0.3cm}

\bibitem{St61}
E. M. Stein, On the theory of harmonic functions of several variables. II.
Behavior near the boundary, Acta Math.  106 (1961), 137-174.

\vspace{-0.3cm}

\bibitem{St70} E. M. Stein, Singular Integrals and Differentiability Properties of Functions,
Princeton University Press, Princeton, N. J., 1970, xiv+290 pp.

\vspace{-0.3cm}

\bibitem{St91} E. M. Stein, Harmonic Analysis: Real-variable
Methods, Orthogonality, and Oscillatory Integrals, Princeton
University Press, Princeton, N. J., 1993, xiv+695 pp.

\vspace{-0.3cm}

\bibitem{SW60}
E. M. Stein and G. Weiss, On the theory of harmonic functions of several variables. I.
The theory of $H^p$-spaces, Acta Math.  103 (1960), 25-62.

\vspace{-0.3cm}

\bibitem{SW68}
E. M. Stein and G. Weiss,
Generalization of the Cauchy-Riemann equations and representations
of the rotation group, Amer. J. Math.  90 (1968), 163-196.

\vspace{-0.3cm}

\bibitem{SW71}
E. M. Stein and G. Weiss, Introduction to Fourier Analysis on Euclidean Spaces,
Princeton University Press, Princeton, N. J., 1971, x+297 pp.

\vspace{-0.3cm}

\bibitem{Str79} J.-O. Str\"omberg, Bounded mean oscillation with Orlicz
norms and duality of Hardy spaces, Indiana Univ. Math. J. 28
(1979), 511-544.

\vspace{-0.3cm}

\bibitem{S-T89} J.-O. Str\"omberg and A. Torchinsky, Weighted Hardy
Spaces, Lecture Notes in Math. 1381, Springer-Verlag, Berlin,
1989, vi+193 pp.

\vspace{-0.3cm}

\bibitem{Uc84} A. Uchiyama,
The Fefferman-Stein decomposition of smooth functions and its application to
$H^p(\rn)$, Pacific J. Math.  115 (1984), 217-255.

\vspace{-0.3cm}

\bibitem{Uc01} A. Uchiyama, Hardy Spaces on the Euclidean Space,
Springer-Verlag, Tokyo, 2001, xiv+305 pp.

\vspace{-0.3cm}

\bibitem{Vi87} B. Viviani,
An atomic decomposition of the predual of $BMO(\rho)$,
Rev. Mat. Iberoamericana  3  (1987), 401-425.

\vspace{-0.3cm}

\bibitem{Wh76} R. L. Wheeden, A boundary value characterization of weighted $H^1$,
Enseignement Math. (2)  22 (1976), 121-134.

\vspace{-0.3cm}

\bibitem{Wh79}R. L. Wheeden, On the dual of weighted $H^1(z<1)$,
Approximation theory (Papers, VIth Semester, Stefan Banach Internat. Math. Center, Warsaw, 1975),
pp. 293-303, Banach Center Publ., 4, PWN, Warsaw, 1979.

\vspace{-0.3cm}

\bibitem{Ya85} K. Yabuta, Generalizations of Calder\'on-Zygmund operators,
Studia Math. 82 (1985), 17-31.

\vspace{-0.3cm}

\bibitem{yys} D. Yang and S. Yang, Musielak-Orlicz Hardy spaces associated
with operators and their applications, J. Geom. Anal. 24 (2014),
495-570.

\end{thebibliography}
\end{document}